\newtheoremstyle{curs} 
  {\topsep}            
  {\topsep}            
  {\itshape}           
  {0pt}                
  {\bfseries\sffamily} 
  {.}                  
  { }                  
  {}                   
\newtheoremstyle{ncurs} 
  {\topsep}             
  {\topsep}             
  {}                    
  {0pt}                 
  {\bfseries\sffamily}  
  {.}                   
  { }                   
  {}                    
\theoremstyle{ncurs}
\newtheorem{dfn}{Definition}[section]
\newtheorem{rmk}[dfn]{Remark}
\newtheorem{ass}[dfn]{Assumption}
\newenvironment{manualassumption}[1]{%
  \manualassumptioninner
}{\endmanualassumptioninner}
\theoremstyle{curs}
\newtheorem{lem}[dfn]{Lemma}
\newtheorem{cor}[dfn]{Corollary}
\newtheorem{thm}[dfn]{Theorem}
\numberwithin{equation}{section}
\def\d{\,\mathrm{d}} 
\def\P{\mathbf{P}}   
\def\D{\mathbf{D}}   
\def\Rk{\mathcal{R}} 
\def\O{\mathcal{O}}  
\def\cN{\mathcal{N}} 
\def\e{\mathrm{e}}   
\def\Lls{L_{\mathrm{LS}}} 
\def\Lclass{L_{\mathrm{class}}}
\def\Lhinge{L_{\mathrm{hinge}}}
\newcommand{\R}{\mathbb{R}}
\newcommand{\N}{\mathbb{N}}
\newcommand{\E}{\mathbf{E}}
\newcommand{\norm}[1]{\Vert#1\Vert}
\newcommand{\id}{\mathrm{id}}
\newcommand{\supp}{\mathrm{supp}}
\newcommand{\dist}{\mathrm{dist}}
\newcommand{\diam}{\mathrm{diam}}
\newcommand{\bgamma}{\boldsymbol{\gamma}}
\newcommand{\blambda}{\boldsymbol{\lambda}}
\newcommand{\LRKHS}{H_{\bgamma,\blambda}(\mathcal{A})}
\newcommand{\fftsvm}{\wideparen{f}_{D,\blambda,\bgamma,\mathrm{FFT}(m)}}
\newcommand{\fftsvmcv}{\wideparen{f}_{D_1,\blambda_{D_2},\bgamma_{D_2},\mathrm{FFT}(m)}}
\DeclareMathOperator*{\argmin}{arg\,min}
\def\namedlabel#1#2{\begingroup
    #2%
    \def\@currentlabel{#2}%
    \phantomsection\label{#1}\endgroup
}
\title{Intrinsic Dimension Adaptive Partitioning for Kernel Methods}
\author{Thomas Hamm and Ingo Steinwart}
\date {\today}
\begin{document}
\maketitle
\begin{center}
Institute for Stochastics and Applications\\
Faculty 8: Mathematics and Physics\\
University of Stuttgart\\
D-70569 Stuttgart Germany\\
\texttt{\{\href{mailto:thomas.hamm@mathematik.uni-stuttgart.de}{thomas.hamm}, \href{mailto:ingo.steinwart@mathematik.uni-stuttgart.de}{ingo.steinwart}\}@mathematik.uni-stuttgart.de}
\end{center}
\begin{abstract}
We prove minimax optimal learning rates for kernel ridge regression, resp.~support vector machines based on a data dependent partition of the input space, where the dependence of the dimension of the input space is replaced by the fractal dimension of the support of the data generating distribution. We further show that these optimal rates can be achieved by a training validation procedure without any prior knowledge on this intrinsic dimension of the data. Finally, we conduct extensive experiments which demonstrate that our considered learning methods are actually able to generalize from a dataset that is non-trivially embedded in a much higher dimensional space just as well as from the original dataset.
\par\vskip\baselineskip\noindent
{\bfseries\sffamily{Keywords:}} curse of dimensionality, support vector machines, kernel ridge regression, learning rates, regression, classification
\end{abstract}
\section{Introduction}
In the theoretical analysis of learning methods it is a well-known fact that rates of convergence are significantly deteriorated by the dimension of the input space, a phenomenon usually denoted as the curse of dimensionality in statistical learning theory, exemplified by the well-known results on minimax optimal learning rates for regression by Stone \cite{Stone_GlobalOptimalRatesOfConvergenceForNonparametricRegression} and binary classification by Audibert and Tsybakov \cite{AudibertTsybakov_FastLearningRatesForPlugInClassifiers}. In addition, one state of the art learning methods, namely kernel methods, suffer from high computational costs, which are quadratic in space and at least quadratic in time. In this work we present a data dependent partitioning scheme for kernel methods that alleviates both limitations. The presented partitioning scheme is adaptive to the intrinsic dimension of the data and additionally reduces the space and time complexity. Since these two issues are significant on their own, they are usually treated independently in the literature, for which reason we briefly discuss these two topics separately.


As a consequence of the curse of dimensionality any learning method in a very high dimensional task is bound to fail  given any reasonably realistic sample size, at least in theory. In practice however, high dimensional datasets often exhibit a small \emph{intrinsic} dimensional structure as opposed to being uniformly spread out on the whole input space. Therefore, it is an interesting question whether common learning methods are able to exploit this, yet to be formalized, low intrinsic dimensional structure. There already exists a large amount of literature dealing with this problem ranging from tree based learning methods \cite{DasguptaFreund_RandomProjectionTreesAndLowDimensionalManifolds, KpotufeDasgupta_ATreeBasedRegressorThatAdaptsToIntrinsicDimension, ScottNowak_MinimaxOptimalClassificationWithDyadicDecisionTrees}, $k$-nearest neighbor \cite{Kpotufe_kNNRegressionAdaptsToLocalIntrinsicDimension, KulkarniPosner_RatesOfConvergenceOfNearestNeighborEstimationUnderArbitrarySampling}, Nadaraya-Watson kernel regression and local polynomial regression \cite{KpotufeGarg_AdaptivityToLocalSmoothnesAndDimensionInKernelRegression, BickelLi_LocalPolynomialRegressuinOnUnknownManifolds}, Bayesian regression using Gaussian processes \cite{YangDunson_BayesianManifoldRegression}, to support vector machines and kernel ridge regression \cite{HammSteinwart_IntrinsicDimension, McRaeRomberDavenport_SampleComplexitsAndEffectiveDimension, YeZhou_LearningAndApproximationByGaussiansOnRiemannianManifolds, YeZhou_SVMLearningAndLpApproximationByGaussianOnRiemannianManifolds}. The by far widest spread assumption to formalize the notion of intrinsic dimensionality of data is to assume that the data generating distribution is supported on a low-dimensional, smooth manifold. However, there is a considerable gap between the commonly accepted hypothesis that the data is not uniformly spread over the input space and the assumption, derived from this hypothesis, that the data lies on a smooth manifold. We shorten this gap by considering the \emph{fractal} dimension of the support of the data generating distribution, considerably weakening the manifold assumption similar to the recent results of \cite{HammSteinwart_IntrinsicDimension}. More precisely, we prove minimax optimal learning rates for regression and classification, where the dimension of the ambient space is replaced by fractal dimension of the support of the data generating distribution for kernel ridge regression, resp.~support vector machines using Gaussian kernels based on a data dependent partition of the input space. Although in this work we will only consider least-squares regression and binary classification using the hinge loss, our technique is flexible enough to handle general loss functions, see Theorem \ref{thm:general_oracle_inequality}.

Partitioning schemes are a popular approach for alleviating computational constraints for kernel methods. For example, in random chunking the dataset is randomly partitioned into $m$ subsets that are used for computing $m$ decision functions that have to be averaged for inference, see for example \cite{ZhangDuchiWainwright_DivideAndConquerKernelRidgeRegression}. In our approach we divide the input space into $m$ disjoint, geometrically defined cells and solve the initial objective on each cell independently using only the data points contained in that respective cell. Prediction for a new input is then performed using only the decision function of the cell in which the new input is contained. As a consequence, unlike the random chunking approach, our method enjoys also a speed-up during the testing stage since since our final estimator does not have to be averaged. This geometric partitioning scheme has already been studied, for example, in \cite{BlaschzykSteinwart_ImprovedClassificationRatesForLocalizedSVMs, MeisterSteinwartLSVMsrates, Muecke_ReducingTrainingTime}. In these works however, the authors consider an a-priori fixed partition of the input space satisfying some technical assumptions, whereas we consider a fully data dependent partition based on the farthest first traversal algorithm. Other popular approaches for speeding up kernel methods are Nystr\"om subsampling \cite{WilliamsSeeger_NystromMethod}, where a low rank approximation of the kernel matrix is used or random Fourier features \cite{RahimiRecht_RandomFeatures} where for translation invariant kernels a low dimensional, randomized approximation of the feature map is computed.


In the field of manifold learning much effort is put into designing algorithms that actively exploit some manifold structure of the data, such as computing a low dimensional representation of the data \cite{BelkinNiyogi_LaplacianEigenmaps} or manifold regularization \cite{BelkinNiyogi_ManifoldRegularization}. In contrast, our approach is completely agnostic to the assumption of a low intrinsic dimension of the data. Namely, we show that the optimal learning rates we derive can be achieved with a simple training validation procedure for hyperparameter selection. In summary, the resulting learning method, in terms of generalization performance, is adaptive to both, the regularity of the target function and the intrinsic dimension of the data, while at the same time providing a significant computational advantage. We further complement our theoretical findings with extensive experimental results, showing that regularized kernel methods using cross validation for hyperparameter selection can generalize just as well when the dataset is non-trivially embedded in a much higher dimensional space.

In the following we introduce in detail the framework for the rest of the paper. Assume we are given an input space $X\subset\R^d$ and an output space $Y\subset \R$ and an unknown probability distribution $\P$ on $X\times Y$. Our goal is to learn a functional relationship between $X$ and $Y$ based on a sample $D=((x_1,y_1),\ldots,(x_n,y_n))\in(X\times Y)^n$ drawn from $\P^n$, where our learning goal is specified by a loss function $L:Y\times \R\to[0,\infty)$. In this work we are concerned with regression using the least-squares loss $\Lls(y,t):=(y-t)^2$, where $Y\subset\R$ is an interval and binary classification using the hinge loss $\Lhinge(y,t):=\max\{0,1-yt\}$, where $Y=\{-1,1\}$. The quality of a decision function $f:X\to \R$ is measured by its risk
\begin{equation*}
\Rk_{L,\P}(f):=\int_{X\times Y} L(y,f(x))\,\mathrm{d}\P(x,y).
\end{equation*}
Furthermore, the minimum possible risk, denoted by $\Rk_{L,\P}^*:=\inf_{f:X\to\R} \Rk_{L,\P}(f)$, is called the Bayes risk and any function $f_{L,\P}^*$ with $\Rk_{L,\P}(f_{L,\P}^*)=\Rk_{L,\P}^*$ is called a Bayes decision function. As learning methods we will consider regularized empirical risk minimizers over reproducing kernel Hilbert spaces $H$ (see Appendix  \ref{sec:RKHS_fundamentals})
\begin{equation}
f_{D,\lambda}:=\argmin_{f\in H} \lambda\norm{f}_H^2+\frac{1}{n}\sum_{i=1}^nL(y_i,f(x_i))\label{eqn:def_SVM}
\end{equation}
with a regularization parameter $\lambda>0$. Note that by \cite[Theorem 5.5]{SteinwartChristmannSVMs} $f_{D,\lambda}$ exists and is unique for convex\footnote{We call a loss function $L:Y\times \R\to [0,\infty)$ convex if $L(y,\cdot)$ is convex for all $y\in Y$.} loss functions $L$. We briefly describe the spatially localized version of the estimator (\ref{eqn:def_SVM}), which will be introduced in detail in Section \ref{sec:loc_kernels}. Given a partition $\mathcal{A}=(A_j)_{j=1,\ldots,m}$ of the input space $X$, we compute $m$ independent decision functions on the cells $A_1,\ldots,A_m$ defined by the objective (\ref{eqn:def_SVM}) using only the data points contained in the respective cell. The prediction for a new input $x\in X$ is then performed by evaluating the decision function of the cell $A_j$ in which $x$ is contained.

The rest of this paper is organized as follows: Section \ref{sec:intrinsic_dim_ass} contains an introduction to our notion of intrinsic dimensionality. In Section \ref{sec:loc_kernels} we describe in detail the localization procedure and the construction of the partition. Sections \ref{sec:regression} and \ref{sec:classification} contain our main results for regression and classification, respectively. In Section \ref{sec:experimental_results} we present experimental results where we compare the generalization performance of the global and local version of our considered estimators using a dataset that is non-trivially embedded in a much higher dimensional space to the performance that is achieved when using the unmodified dataset.

\subsection*{Notation}
We denote the set of natural numbers $\{1,2,\ldots\}$ without zero by $\N$ and $\N_0=\{0\}\cup\N$. Given $x\in\R^d$ and $r>0$ then $B_r(x)=\{y\in\R^d:\norm{x-y}\leq r\}$ denotes the closed ball with radius $r$ and center $x$, where $\norm{\cdot}$ denotes the Euclidean norm. For a general normed space $E$, $B_E=\{x\in E:\norm{x}_E\leq 1\}$ denotes the closed unit ball centered at the origin. Given a measure space $(\Omega,\mathcal{F},\mu)$ we denote the usual Lebesgue-space of $p$-integrable functions by $L_p(\mu)$ for $p\in[1,\infty]$. For a set $X$, the space of bounded functions $f:X\to\R$ equipped with the sup-norm is denoted by $\ell_\infty(X)$.
\section{Intrinsic Dimensionality of Data}\label{sec:intrinsic_dim_ass}
The formalization of the notion of intrinsic dimensionality of the data, which is agreed upon in the literature, consists of describing it by the dimension of the support of the marginal $\P_X$ of the data generating distribution $\P$. As we describe the dimensionality of this support in terms of fractal dimensions, we first need to introduce the concept of covering and entropy numbers.
\begin{dfn}
Let $A\subset \R^d$. A subset $N\subset \R^d$ is called an $\varepsilon$-net of $A$ if $A\subset \bigcup_{x\in N}B_\varepsilon(x)$.
\begin{enumerate}
\item For $\varepsilon>0$ the covering number $\cN(A,\varepsilon)$ is defined as the minimum size of an $\varepsilon$-net of $A$.
\item For $m\in\N$ the entropy number $\varepsilon_m(A)$ is defined as
\begin{equation*}
\varepsilon_m(A):=\inf\{\varepsilon>0: \text{there exists an } \varepsilon\text{-net } N\subset A \text{ of } A \text{ with } |N|=m  \}.
\end{equation*}
\end{enumerate}
\end{dfn}
Note that, for technical reasons that becomes obvious later, only in the definition of $\varepsilon_m$ we require the net to be contained inside the considered set. As one would expect, there is a close connection between covering and entropy numbers, which we will discuss later. From now on let $\mu:=\P_X$ be the marginal distribution of $\P$ on $X$ and let $S:=\supp\,\mu$.
\begin{ass}\label{ass:assouad_dim}
The set $S$ is bounded and there exist constants $C_S\geq 1$ and $\varrho>0$ such that
\begin{equation}
\sup_{x \in S} \cN\big( B_r(x)\cap S,\varepsilon \big) \leq C_S\left( \frac{\varepsilon}{r} \right)^{-\varrho}\quad \text{for all } 0<\varepsilon\leq r.\label{eqn:assouad_dim}
\end{equation}
\end{ass}
The infimum  over all exponents $\varrho$ such that (\ref{eqn:assouad_dim}) in Assumption \ref{ass:assouad_dim} is satisfied, is known as the Assouad dimension of $S$, see \cite[Section 2.1]{Fraser_AssouadDimensionAndFractalGeometry}. The definition of Assouad dimension generalizes straightforward to general metric spaces and is used to characterize metric spaces that can be bi-Lipschitz embedded in a Euclidean space, see \cite{LehrbackTuominen_ANoteOnTheDimensionsOfAssouadAndAikawa}. The exponent $\varrho$ in Assumption \ref{ass:assouad_dim} is consistent with classical notions of dimensions, e.g.~the dimension of Euclidean spaces and smooth manifolds, which is a consequence of the basic properties of the Assouad dimension summarized in \cite[Section 2.4]{Fraser_AssouadDimensionAndFractalGeometry}. Note that by choosing $r>0$ sufficiently large, Assumption \ref{ass:assouad_dim} especially implies that $\cN(S,\varepsilon)\in \O(\varepsilon^{-\varrho})$ as $\varepsilon\to 0$ and by some basic properties of covering and entropy numbers the latter is equivalent to $\varepsilon_m(S)\in\O(m^{-1/\varrho})$ as $m\to\infty$. As we often have to switch between bounds on entropy numbers and covering numbers, we also formulate for convenience an extended version of Assumption \ref{ass:assouad_dim}, where we demand that the previously stated bound on the asymptotic of $\varepsilon_m(S)$ is satisfied for the same constant $C_S$ and that this bound on $\varepsilon_m(S)$ is sharp.
\begin{manualassumption}{\ref{ass:assouad_dim}*}\label{ass:assouad_dim_ast}
Let $S$ satisfy Assumption \ref{ass:assouad_dim} for the constants $C_S$ and $\varrho$ as well as
\begin{equation}
C_S^{-1}m^{-\frac{1}{\varrho}}\leq \varepsilon_m(S)\leq C_Sm^{-\frac{1}{\varrho}} \quad \text{for all } m\in\N. \label{eqn:ass_entropy_numbers}
\end{equation}
\end{manualassumption}
Further, we make an assumption on the data generating distribution $\mu$.
\begin{ass}\label{ass:marginal}
There exist constants $C_\mu\geq 1$ and $\delta>0$ such that
\begin{align*}
\inf_{x\in S} \mu(B_r(x))\geq C_\mu^{-1}\, r^{\delta} \quad \text{for all } 0<r\leq \diam\, S.
\end{align*}
\end{ass}
To give a quick example on typical values of the constant $\delta$ in Assumption \ref{ass:marginal}, assume that $X=[-1,1]^d$ and that $\P_X$ has a density with respect to the uniform distribution on $X$ bounded away from 0. Then Assumption \ref{ass:marginal} is satisfied for $\delta=d$. Note that for this example not only the density assumption on $\P_X$ is crucial, but also the geometry of the support of $\P_X$. For example, if $\P_X$ is the uniform distribution on a domain $X$ with cusps, then in general Assumption \ref{ass:marginal} is not fulfilled, at least not for $\delta=d$. Similar assumptions are common in level set estimation, see for example \cite{Cuevas_SetEstimation} for a survey or \cite[Remark 1]{AmbrosioColestaniVilla_OuterMinowskiContent} for an explicit construction of probability measures on sets $S\subset \R^d$, that are the image of a compact set $K\subset \R^{d'}, d'\leq d$ under a Lipschitz map satisfying Assumption \ref{ass:marginal} for $\delta=d'$. More generally, connections between properties of metric spaces described by their covering numbers (such as Assumption \ref{ass:assouad_dim}) and properties of measures on that space (especially how they act on balls) is a well-studied field in fractal geometry, see for example \cite[Chapter 1]{Heinonen_LecturesOnAnalysisOnMetricSpaces}. Particularly interesting for us is that, by \cite[Theorem 13.5]{Heinonen_LecturesOnAnalysisOnMetricSpaces}, if Assumption \ref{ass:assouad_dim} is satisfied for some $\varrho$, then for every $\delta>\varrho$ there exists a measure $\mu$ on $S$ satisfying Assumption \ref{ass:marginal} for this respective $\delta$. As an important consequence, the set of probability measures $\P$ satisfying both, Assumption \ref{ass:assouad_dim} and \ref{ass:marginal} is non-empty, even in the general case of non-integer $\varrho$.
\section{Localized Kernels and Construction of Partition}\label{sec:loc_kernels}
The approach of dividing the input space into disjoint cells and solving the initial learning problem independently on each cell with the data points contained in the respective cell is especially convenient for kernel methods from a mathematical perspective, since this procedure can be described by simply using a modified kernel, which we will explain in the following. We will only consider the Gaussian RKHS in the following, although the construction for general RKHSs is the same. In Appendix \ref{sec:RKHS_fundamentals} we collected some basic properties of RKHSs which are important for the rest of this section.

For $X\subset\R^d$ let $H_\gamma(X)$ denote the Gaussian RKHS on $X$ of width $\gamma>0$, that is, the RKHS associated to the kernel $k_\gamma(x,y)=\exp(-\gamma^{-2}\norm{x-y}^2)$ for $x,y\in X$. Given a partition $\mathcal{A}=(A_j)_{j=1,\ldots,m}$ of $X$ and $\blambda=(\lambda_1,\ldots,\lambda_m),\bgamma=(\gamma_1,\ldots,\gamma_m)\in(0,\infty)^m$ let $\LRKHS$ be the space of functions $f:X\to\R$ such that $f|_{A_j}\in H_{\gamma_j}(A_j)$ for all $j=1,\ldots,m$ equipped with the norm
\begin{equation*}
\norm{f}_{\LRKHS}^2:=\sum_{j=1}^m \lambda_j\norm{f|_{A_j}}_{H_{\gamma_j}(A_j)}^2.
\end{equation*}
Then $\LRKHS$ is a Hilbert space where the inner product is given by
\begin{equation*}
\langle f,g\rangle_{\LRKHS}=\sum_{j=1}^m \lambda_j \langle f|_{A_j},g|_{A_j}\rangle_{H_{\gamma_j}(A_j)},\quad f,g\in\LRKHS.
\end{equation*}
Moreover, $\LRKHS$ is an RKHS. To see this, we define $k:X\times X\to \R$ by
\begin{equation*}
k(x,y):=\sum_{j=1}^m \lambda_j^{-1}\mathbf{1}_{A_j}(x)k_{\gamma_j}(x,y)\mathbf{1}_{A_j}(y)
\end{equation*}
and verify the reproducing property
\begin{equation*}
\langle f,k(x,\cdot)\rangle_{\LRKHS}=\sum_{j=1}^m \mathbf{1}_{A_j}(x)\langle f|_{A_j},k_{\gamma_j}(x,\cdot)|_{A_j}\rangle_{H_{\gamma_j}(A_j)}=f(x).
\end{equation*}
For the RKHS $\LRKHS$ and a convex loss function $L:Y\times \R\to[0,\infty)$ we now consider the regularized empirical risk minimizer
\begin{equation}
f_{D,\blambda,\bgamma}:=\argmin_{f\in \LRKHS}  \norm{f}_{\LRKHS}^2+\frac{1}{n}\sum_{i=1}^n L(y_i,f(x_i))\label{eqn:def_LSVM},
\end{equation}
where $D=((x_1,y_1),\ldots,(x_n,y_n))\in(X\times Y)^n$ is a dataset. Note that compared to the global objective (\ref{eqn:def_SVM}), in (\ref{eqn:def_LSVM}) the regularization parameter $\lambda$ is now a component of the RKHS norm and can be chosen individually on each cell. If we define $I_j=\{i:x_i\in A_j\}$ for $j=1,\ldots,m$, we can rewrite the learning objective as
\begin{equation*}
f_{D,\blambda,\bgamma}=\argmin_{f\in\LRKHS} \sum_{j=1}^m \left( \lambda_j\norm{f|_{A_j}}_{H_{\gamma_j}(A_j)}^2+\frac{1}{n}\sum_{i\in I_j} L\left(y_i,f|_{A_j}(x_i)\right)\right)
\end{equation*}
and we see that the learning objective is minimized for $f\in\LRKHS$ if and only if 
\begin{equation*}
f|_{A_j}=\argmin_{g\in H_{\gamma_j}(A_j)}\frac{n\lambda_j}{n_j}\norm{g}_{H_{\gamma_j}(A_j)}^2+\frac{1}{n_j}\sum_{i\in I_j} L(y_i,g(x_i)),
\end{equation*}
where $n_j:=|I_j|$ and we assume that $n_j\geq 1$ for all $j=1,\ldots,m$, i.e.~every cell contains at least one data point. By the uniqueness of (\ref{eqn:def_SVM}) we can conclude that $f_{D,\blambda,\bgamma}|_{A_j}=f_{D_j,\lambda_j',\gamma_j}$, where $D_j= ((x_i,y_i))_{i\in I_j}$, $\lambda_j'=n\lambda_j/n_j$ and
\begin{equation*}
f_{D_j,\lambda_j',\gamma_j}=\argmin_{f\in H_{\gamma_j}(A_j)} \lambda_j'\norm{f}_{H_{\gamma_j}(A_j)}+\frac{1}{n_j}\sum_{i\in I_j}L(y_i,f(x_i))
\end{equation*}
is the regularized empirical risk minimizer using the standard Gaussian RKHS and the dataset $D_j$.

We will consider a Voronoi partition of the input space $X$, based on a set of center points $C=\{c_1,\ldots,c_m\}$, where the center points are a subset of the input vectors $V=\{x_1,\ldots,x_n\}$ of our dataset $((x_1,y_1),\ldots,(x_n,y_n))$. To this end, recall that in a Voronoi partition $\mathcal{A}=(A_j)_{j=1,\ldots,n}$ with respect to the centers $C=\{c_1,\ldots,c_m\}$ each cell $A_j$ consists of all the points $x\in X$ that have $c_j$ as the closest center, where we break ties in favor of a smaller index $j$ of the center $c_j$. Our center points $C$ are constructed by the farthest first traversal (FFT) algorithm, see Algorithm \ref{alg:FFT}. We will denote the learning method (\ref{eqn:def_LSVM}) using the partition into $m$ cells constructed in this manner by $\fftsvm$, indicating that the dataset which is used for constructing the partition is the same as the one used for computing the individual decision functions.

\begin{algorithm}
\caption{Farthest First Traversal}
\begin{algorithmic}
\REQUIRE $V=\{v_1,\ldots,v_n\}\subset \R^d, k\leq n$
\STATE $C\gets \{v_1\}$
\WHILE{$|C|<k$} \STATE{$C\gets C\cup \{c\}$ for $c\in V\backslash C$ with maximum distance to $C$}  \ENDWHILE
\RETURN $C$
\end{algorithmic}
\label{alg:FFT}
\end{algorithm}

A key property of the farthest first traversal algorithm, which will be crucial in our subsequent analysis, is that it produces an approximate solution of the metric $k$-center problem, see \cite[Theorem 4.3]{Har-Peled_GeometricApproximationAlgorithms}. To this end, recall that the objective in the metric $k$-center problem is to find a set $C\subset V$ with $|C|=k$, which minimizes
\begin{equation}\label{eqn:metric_kcenter_objective}
\max_{v\in V}\min_{c\in C}\norm{v-c},
\end{equation}
that is, to find a set of $k$ centers such that the maximum distance from any $v\in V$ to its closest center is minimized. Solving the metric $k$-center exactly is NP-hard, the solution computed by FFT is within a factor of 2 of the optimal value of (\ref{eqn:metric_kcenter_objective}) and can be computed in $\O(kn)$ time.


For our subsequent theoretical analysis we need to introduce a technicality for treating unbounded loss functions. To this end, we say that a loss function $L:Y\times \R\to[0,\infty)$ can be clipped at some value $M>0$, if $L(y,\wideparen{t})\leq L(y,t)$ for all $y\in Y$ and $t\in\R$, where
\begin{align*}
\wideparen{t}:=
\begin{cases}
-M &\text{ for } t<-M,\\
t &\text{ for } t\in[-M,M],\\
M &\text{ for } t>M.
\end{cases}
\end{align*}
is the clipped value of $t$ at $M$, see \cite[Definition 2.22]{SteinwartChristmannSVMs}. For loss functions $L$ that can be clipped at $M>0$ we apply the clipping operation point wise to our estimator (\ref{eqn:def_LSVM}) and denote the resulting decision function by $\wideparen{f}_{D,\blambda,\bgamma}$.

As usual, the optimal choices for the hyperparameters $\blambda,\bgamma$ depend on the unknown characteristics of the data generating distribution $\P$. To address this issue, we also consider a training validation procedure, that adaptively picks hyperparameters that have a small empirical error on a validation set, which was not used for training. To this end, we split our dataset $D$ into a training set $D_1:=((x_1,y_1),\ldots,(x_l,y_l))$ and a validation set $D_2:=((x_{l+1},y_{l+1}),\ldots,(x_n,y_n))$, where $l:=\lfloor n/2\rfloor+1$ and pick finite sets of candidate values $\Lambda_n,\Gamma_n$ for $\lambda_j$ and $\gamma_j$. We then compute the decision functions $f_{D_1,\blambda,\bgamma}$ for all $\blambda\in\Lambda_n^m,\bgamma\in \Gamma_n^m$ using the training set $D_1$ and pick the hyperparameters $\blambda_{D_2},\bgamma_{D_2}$ which perform best on the validation set $D_2$, that is our final decision function $\wideparen{f}_{D_1,\blambda_{D_2},\bgamma_{D_2}}$ is defined by
\begin{equation}\label{eqn:def_training_validation}
\sum_{i=l+1}^n L\left(y_i,\wideparen{f}_{D_1,\blambda_{D_2},\bgamma_{D_2}}(x_i)\right)=\min_{(\blambda,\bgamma)\in\Lambda_n^m\times\Gamma_n^m}\sum_{i=l+1}^n L\left(y_i, \wideparen{f}_{D_1,\blambda,\bgamma}(x_i)\right).
\end{equation}
Note that since using the estimator (\ref{eqn:def_LSVM}) is equivalent to using $m$ independent decision functions, the validation step is executed independently on each cell, which amounts to a total number of $m|\Lambda_n\times \Gamma_n|$ training runs that need to be performed, instead of $|\Lambda_n\times\Gamma_n|^m$. In Sections \ref{sec:regression} and \ref{sec:classification} we will show that it is sufficient for the candidate sets $\Lambda_n,\Gamma_n$ to grow logarithmically in the sample size $n$ in order to achieve optimal learning rates. In contrast, \cite{BlaschzykSteinwart_ImprovedClassificationRatesForLocalizedSVMs, MeisterSteinwartLSVMsrates} consider a similar training validation procedure for kernel partitioning methods, but they require the candidate sets to grow at least linearly in $n$, which makes the validation step computationally infeasible.
\begin{rmk}
In the results of Sections \ref{sec:regression} and \ref{sec:classification} the reader will notice, that the regularization parameters $\lambda_1,\ldots,\lambda_m$ and the bandwidths $\gamma_1,\ldots,\gamma_m$ are chosen identically on each cell, i.e.~$\lambda_1=\ldots=\lambda_m$ and $\gamma_1=\ldots=\gamma_m$. The reason for this is, that the asymptotically optimal choices for the parameters $\lambda_1,\ldots,\lambda_m$ and $\gamma_1,\ldots,\gamma_m$ are determined by the \emph{global} regularity properties of the data generating distribution $\P$. We illustrate this in the case of least squares regression, where regularity is measured by the smoothness of the Bayes decision function $f_{L,\P}^*$. Assume that $f_{L,\P}^*\in C^\alpha(X)$ (see Section \ref{sec:regression} for a definition), but restricted to some subset $X'\subset X$ the Bayes function $f_{L,\P}^*$ is $\alpha'$-times differentiable with $\alpha'\gg\alpha$. Then for cells $A_j$ contained in $X'$, this property of $f_{L,\P}^*$ can be utilized by an individual adjustment of the hyperparameters on these cells, and in turn improve the overall generalization performance of the estimator. However, the \emph{asymptotic} behavior of the excess risk (on which we focus in our theoretical results) can not be improved by this individual choice and is bottlenecked by the global degree of smoothness of $f_{L,\P}^*$. Additionally, an identical choice of $\lambda_1,\ldots,\lambda_m$ and $\gamma_1,\ldots,\gamma_m$ across all cells greatly simplifies the expressions in our theoretical analysis. For an example of a set of regularity assumptions in binary classification, where an individual choice of the hyperparameters on the cells actually leads to an improved asymptotic behavior of the learning rate, we refer to \cite{BlaschzykSteinwart_ImprovedClassificationRatesForLocalizedSVMs}.
\end{rmk}
\section{Least Squares Regression}\label{sec:regression}
Before we introduce our regularity assumptions, we quickly recall multi-index notation. For a multi-index $\nu=(\nu_1,\ldots,\nu_d)\in\N_0^d$ we set $|\nu|:=\nu_1+\ldots+\nu_d$ as well as $\nu!:=\nu_1!\cdot\ldots\cdot\nu_d!$ and given $x\in\R^d$ we set $x^\nu:=x_1^{\nu_1}\cdot\ldots\cdot x_d^{\nu_d}$. For a $k$-times continuously differentiable function $f:\R^d\to\R$ let
\begin{equation*}
\partial^\nu f(x)= \frac{\partial^{|\nu|}f}{\partial x_1^{\nu_1}\ldots \partial x_d^{\nu_d}}(x).
\end{equation*}
be the $k$-th order partial derivative.
\begin{dfn}
For $k\in\N_0$ and $\beta \in [0,1]$ let $C^{k,\beta}(\R^d)$ be the set of $k$-times continuously differentiable functions $f:\R^d\to\R$ with
\begin{equation*}
|f|_{C^{k,\beta}(\R^d)}:=\max_{|\nu|=k} \sup_{\substack{x,y\in\R^d\\x\neq y}} \frac{|\partial^\nu f(x)-\partial^\nu f(y)|}{\norm{x-y}^\beta}<\infty.
\end{equation*}
\end{dfn}
Regularity assumptions for least squares regression are usually formulated by differentiability properties of the Bayes function $f_{L,\P}^*(x)=\E(Y|X=x)$. Since in our setting $S=\supp\,\P_X$ is in general a set with empty interior, we first have to make some preliminary considerations in order to impose differentiability assumptions on $f_{L,\P}^*$. To this end, assume that for a function $f:S\to\R$ there exists a collection of functions $f_\nu:S\to\R$, $\nu \in \N_0^d, |\nu|\leq k$, where $f_0=f$ such that
\begin{equation}\label{eqn:whitney_condition_1}
f_\nu (x)=\sum_{|\nu+\iota|\leq k} \frac{f_{\nu+\iota}(y)}{\iota !}(x-y)^\iota + R_\nu (x,y)
\end{equation}
with $|f_\nu(x)|\leq C$ for all $x\in S$ and the residuals $R_\nu$ satisfy
\begin{equation}\label{eqn:whitney_condition_2}
R_\nu(x,y) \leq C\norm{x-y}^{k+\beta-|\nu|}
\end{equation}
for some $0<\beta \leq 1$ and all $x,y\in S$ and $|\nu|\leq k$ and some finite constant $C>0$. The obvious motivation for conditions (\ref{eqn:whitney_condition_1}) and (\ref{eqn:whitney_condition_2}) is that if $f\in C^{k,\beta}(\R^d)$ and $S$ has non-empty interior, then (\ref{eqn:whitney_condition_1}) and (\ref{eqn:whitney_condition_2}) are satisfied for the partial derivatives $f_\nu=\partial^\nu f$. By Whitney's extension theorem \cite[Chapter VI, Theorem 4]{Stein_SingularIntegralsAndDifferentiabilityPropertiesOfFunctions}, for a closed set $S$ with \emph{empty} interior any function $f:S\to\R$ satisfying (\ref{eqn:whitney_condition_1}) and (\ref{eqn:whitney_condition_2}) has an extension to a function $f_0\in C^{k,\beta}(\R^d)$. As a consequence, our regularity assumption on the Bayes decision function $f_{L,\P}^*$ being contained in $C^{k,\beta}(\R^d)$ only depends on the values of $f_{L,\P}^*$ on $S$. Moreover, $f_{L,\P}^*:\R^d\to \R$ can always be chosen compactly supported. In the subsequent results, besides $f_{L,\P}^*\in C^{k,\beta}(\R^d)$, for some technical reasons we will also require that $f_{L,\P}^*\in L_2(\R^d)\cap L_\infty(\R^d)$. By the argument above, this poses no further restriction.


The following theorem, which is our first main result of this section, states an oracle inequality for the estimator (\ref{eqn:def_LSVM}) under Assumptions \ref{ass:assouad_dim_ast} and \ref{ass:marginal}.
\begin{thm}\label{thm:ls_oracle_inequality}
Let $\P$ satisfy Assumption \ref{ass:assouad_dim_ast} for the constants $C_S$ and $\varrho$ and Assumption \ref{ass:marginal} for the constants $C_\mu$ and $\delta$. Further assume that $Y\subset [-M,M]$ as well as $f_{L,\P}^*\in C^{k,\beta}(\R^d)\cap L_2(\R^d)\cap L_\infty (\R^d)$ for some $k\in\N_0$ and $\beta\in[0,1)$ and set $\alpha:=k+\beta$. Consider the estimator $\fftsvm$ using the least-squares loss $L=\Lls$ for some $m\leq n $ and hyperparameters $\lambda_1=\ldots=\lambda_m=:\lambda$ and $\gamma_1=\ldots=\gamma_m=:\gamma$. Then for all $\tau>0, n> 1, \lambda\in(0,1) $, and $ \gamma\in(0,m^{-1/\varrho}]$ we have
\begin{align*}
\Rk_{L,\P}(\fftsvm)-\Rk_{L,\P}^*\leq& c_1\norm{f_{L,\P}^*}_{L_2(\R^d)}^2m\lambda\gamma^{-d}+c_2|f_{L,\P}^*|_{C^{k,\beta}(\R^d)}^2\gamma^{2\alpha} \\
&+c_{m,n} K\lambda^{-1/\log n}\gamma^{-\varrho}n^{-1}\log^{d+1}n+c_3\frac{\tau}{n}
\end{align*}
with probability $\P^n$ not less than $1-3\e^{-\tau}$, where $c_1=9\pi^{-d/2}4^{k+1}$,
\begin{align*}
c_2&=9\left( \frac{\Gamma\left(\frac{\alpha+d}{2} \right)}{\Gamma\left( \frac{d}{2}\right)} \right)^2 2^{-\alpha}d^{k}, \\
c_{m,n}&=\max\left\{ C_S^{\varrho+1}\left(1+m^{2}\exp\left( -\log 2C_\mu^{-1} C_S^{-\delta}nm^{-\delta/\varrho}/\log n\right)\right),4M^2\right\}\max\left\{16M^2,1 \right\}, \\
c_3&=3456M^2+15\max\left\{ (2^{k+1}\norm{f_{L,\P}^*}_{L_\infty(\R^d)}+M)^2,4M^2\right\},
\end{align*}
and $K$ is a constant independent of $\P,\lambda,\gamma,n$, and $m$.
\end{thm}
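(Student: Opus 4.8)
The plan is to derive the bound as an instance of our general oracle inequality for clipped regularized empirical risk minimizers (Theorem~\ref{thm:general_oracle_inequality}, which rests on the standard toolbox of \cite{SteinwartChristmannSVMs}), applied to $f_{D,\blambda,\bgamma}$ regarded as a single regularized empirical risk minimizer over the RKHS $\LRKHS$. Three inputs are needed: a bound on the approximation error $A(\blambda):=\inf_{f\in\LRKHS}\norm f_{\LRKHS}^2+\Rk_{L,\P}(f)-\Rk_{L,\P}^*$, a variance (Bernstein) bound for the least-squares excess loss, and a bound on the entropy numbers of the unit ball $B_{\LRKHS}$ with respect to $\norm{\cdot}_{\ell_\infty(S)}$. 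The first two are essentially classical; the genuinely new points are the entropy bound, in which the ambient dimension $d$ must be traded for $\varrho$ up to logarithmic factors, and the fact that the partition $\mathcal A$ is itself data dependent.

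For the approximation error I would invoke the Gaussian convolution construction of Eberts and Steinwart: there is a function $K$ with $\norm K_{L_1(\R^d)}\le 2^{k+1}$ such that $f_0:=K*f_{L,\P}^*$ satisfies $\norm{f_0}_{H_\gamma(\R^d)}^2\le\pi^{-d/2}4^{k+1}\gamma^{-d}\norm{f_{L,\P}^*}_{L_2(\R^d)}^2$ and $\norm{f_0-f_{L,\P}^*}_{\ell_\infty}^2\le(\Gamma((\alpha+d)/2)/\Gamma(d/2))^2 2^{-\alpha}d^k\gamma^{2\alpha}|f_{L,\P}^*|_{C^{k,\beta}(\R^d)}^2$; this is where $f_{L,\P}^*\in L_2(\R^d)\cap L_\infty(\R^d)$ and $\beta<1$ enter, and where (up to the universal factor $9$ coming out of the oracle inequality) the constants $c_1$ and $c_2$ originate. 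Restricting $f_0$ to the cells and using that the restriction operator $H_\gamma(\R^d)\to H_\gamma(A_j)$ has norm at most one gives $\norm{f_0}_{\LRKHS}^2=\lambda\sum_{j=1}^m\norm{f_0|_{A_j}}_{H_\gamma(A_j)}^2\le m\lambda\norm{f_0}_{H_\gamma(\R^d)}^2$, which is the source of the $m\lambda\gamma^{-d}$ term; together with the least-squares identity $\Rk_{L,\P}(f_0)-\Rk_{L,\P}^*=\norm{f_0-f_{L,\P}^*}_{L_2(\P_X)}^2\le\norm{f_0-f_{L,\P}^*}_{\ell_\infty}^2$ this yields the first two summands. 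For the variance I would use the standard least-squares estimate $\E(L(y,\wideparen f(x))-L(y,f_{L,\P}^*(x)))^2\le 16M^2(\Rk_{L,\P}(\wideparen f)-\Rk_{L,\P}^*)$, valid because $Y\subset[-M,M]$ and $\wideparen f$ is clipped, together with the supremum bound $\norm{L(y,\wideparen{f_0}(x))-L(y,f_{L,\P}^*(x))}_\infty\le(2^{k+1}\norm{f_{L,\P}^*}_{L_\infty(\R^d)}+M)^2$; these feed the $\tau/n$ term and fix $c_3$.

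The core estimate is the entropy bound. By the $2$-approximation guarantee of farthest first traversal for the metric $k$-center objective \cite[Theorem 4.3]{Har-Peled_GeometricApproximationAlgorithms}, and because under Assumption~\ref{ass:marginal} the sample is with high probability a sufficiently fine net of $S$, every cell satisfies $\diam(A_j\cap S)\le 2C_S m^{-1/\varrho}$, so Assumption~\ref{ass:assouad_dim_ast} yields $\cN(A_j\cap S,\gamma)\le 2^\varrho C_S^{\varrho+1}\gamma^{-\varrho}/m$ (here the restriction $\gamma\le m^{-1/\varrho}$ is needed so that Assumption~\ref{ass:assouad_dim} applies at scale $\gamma$, and the lower bound on $\varepsilon_m(S)$ in Assumption~\ref{ass:assouad_dim_ast} is what will force the cells to be nonnegligible in the next step); this is where the factor $C_S^{\varrho+1}$ in $c_{m,n}$ comes from and why the intrinsic exponent $\varrho$ replaces $d$. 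Feeding this covering-number control into the entropy number estimates for Gaussian RKHSs over sets of bounded metric dimension (in the spirit of \cite{HammSteinwart_IntrinsicDimension}) gives, for every $p\in(0,1)$, a per-cell bound $\varepsilon_i(\mathrm{id}\colon\lambda^{-1/2}B_{H_\gamma(A_j\cap S)}\to\ell_\infty(A_j\cap S))\le c_p\lambda^{-1/2}(C_S^{\varrho+1}\gamma^{-\varrho}/m)^{1/(2p)}i^{-1/(2p)}$, with the $p$-dependent constant $c_p$ and the residual $\log^{d+1}n$ factor carrying the only surviving trace of the ambient dimension. Since, for the supremum norm over the disjoint sets $A_j\cap S$, log-covering numbers add over cells, combining the $m$ cell-wise bounds cancels the $1/m$ exactly and gives $\varepsilon_i(\mathrm{id}\colon B_{\LRKHS}\to\ell_\infty(S))\le c_p\lambda^{-1/2}(C_S^{\varrho+1}\gamma^{-\varrho})^{1/(2p)}i^{-1/(2p)}$ with no leftover $m$; choosing $p=p_n\asymp 1/\log n$ as in Eberts--Steinwart then turns the resulting complexity term of the oracle inequality into $K\lambda^{-1/\log n}\gamma^{-\varrho}n^{-1}\log^{d+1}n$ with $K$ independent of $\P,\lambda,\gamma,n,m$.

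It remains to assemble the pieces and handle the randomness of $\mathcal A$. Let $E$ be the event that the farthest-first-traversal partition enjoys the diameter control above and that every cell contains at least half its expected number of sample points; since each cell $A_j$ contains a ball of radius $\gtrsim C_S^{-1}m^{-1/\varrho}$ around its center, of $\mu$-mass at least $C_\mu^{-1}C_S^{-\delta}(c\,m^{-1/\varrho})^\delta$ by Assumption~\ref{ass:marginal}, a Chernoff bound together with a union bound over the cells shows that $E$ fails with probability at most $m^2\exp(-\log 2\,C_\mu^{-1}C_S^{-\delta}n m^{-\delta/\varrho}/\log n)$. On $E$ the hypotheses of the general oracle inequality hold with the constants computed above, hence the claimed bound with probability at least $1-3\e^{-\tau}$; on $E^{c}$ one falls back on the trivial bound $\Rk_{L,\P}(\fftsvm)-\Rk_{L,\P}^*\le 4M^2$, and folding the weighted bad-event contribution into the complexity term is precisely what turns the bare constant into $c_{m,n}$. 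The main obstacle is this entropy-and-partition step taken together: obtaining the $\gamma^{-\varrho}$ scaling uniformly over the random FFT partition, verifying that the spurious factors of $m$ really cancel (which also uses $m\le n$ so that $m^{2p_n}$ stays bounded), and keeping the probability that the geometry of the partition degenerates exponentially small — which is exactly where Assumptions~\ref{ass:assouad_dim_ast} and \ref{ass:marginal} and the $2$-approximation property of farthest first traversal are needed simultaneously.
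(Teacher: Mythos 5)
Your roadmap is the same as the paper's: specialize Theorem~\ref{thm:general_oracle_inequality} to the least-squares loss, use the Gaussian-convolution approximation $f_0$ from \cite{HammSteinwart_IntrinsicDimension} to get the $m\lambda\gamma^{-d}$ and $\gamma^{2\alpha}$ terms, plug in $B=4M^2$, $V=16M^2$, $\vartheta=1$, $|L|_{M,1}=4M$, and take $p\asymp 1/\log n$ so that $p^{-d-1}$ becomes $\log^{d+1}n$ and $\lambda^{-p}$ becomes $\lambda^{-1/\log n}$. The entropy estimate you sketch (diameter control from the FFT $2$-approximation, per-cell covering number $\lesssim C_S^{\varrho+1}\gamma^{-\varrho}/m$ via Assumption~\ref{ass:assouad_dim_ast}, additivity of log-covering numbers across cells so that the $1/m$ cancels) is exactly Lemma~\ref{lem:entropy_local_kernel} combined with Corollary~\ref{cor:prob_opt_covering}, and you correctly flag this as the crux.

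Where you deviate, and where the gap is, is in how the randomness of the partition is handled. The paper never conditions on a good event when invoking the oracle inequality: it bounds the \emph{expectation} $\E_{D\sim\mu^n}e_i(\id:\LRKHS\to L_2(\D))$ (Corollary~\ref{cor:entroby_bound}), splitting into $Z$ and $Z^{c}$ \emph{inside} that expectation, and then feeds the resulting bound on $\E e_i$ directly into \cite[Theorem 7.23]{SteinwartChristmannSVMs}, which is formulated precisely for this expected-entropy input. Your version --- apply the oracle inequality conditionally on $E$, fall back to $4M^2$ on $E^c$ --- is not what the constant $c_{m,n}$ is doing; the factor $\max\{\,\cdots,4M^2\}$ in $c_{m,n}$ comes from $\max\{a,B\}$ in the entropy input $a$ to the oracle inequality (with $B=4M^2$ and $B^{2p}$ simplified), not from a bad-event fallback bound. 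Moreover, conditioning on a sample-measurable event destroys the product structure of $\P^n$ that the concentration step in the oracle inequality relies on, so making your version rigorous requires extra work the paper sidesteps by working in expectation.

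The concrete error is your explanation of the $m^{2}\exp\bigl(-\log 2\,C_\mu^{-1}C_S^{-\delta}nm^{-\delta/\varrho}/\log n\bigr)$ factor. You attribute it to a Chernoff bound on cell \emph{occupancy}, premised on each Voronoi cell containing a ball of radius $\gtrsim C_S^{-1}m^{-1/\varrho}$ around its center. Neither piece appears in the paper and neither is needed: the paper's bad event is solely that some cell $A_j\cap S$ fails to be contained in $B_{6\varepsilon_m(S)}(c_j)$, which by Lemma~\ref{lem:empirical_distance} has probability at most $m\exp(-C_\mu^{-1}n\varepsilon_m(S)^\delta)$; the extra factor of $m$ (giving $m^{2}$ after the exponent bookkeeping), the $\log 2/\log n$, and the $C_S^{-\delta}m^{-\delta/\varrho}$ all come from multiplying this probability into the trivial entropy bound $\cN(S,\gamma)$ on $Z^c$, simplifying $m^{1+1/(2p)}\le m^{2}$ for $p\le 1/2$, and using the lower bound $\varepsilon_m(S)\ge C_S^{-1}m^{-1/\varrho}$ from Assumption~\ref{ass:assouad_dim_ast}. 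Separately, the ball-containment premise is not generally true for Voronoi cells of the FFT centers without further argument, and the paper never asserts it --- it only needs the reverse inclusion $A_j\cap S\subset B_{6\varepsilon_m(S)}(c_j)$.
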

Using Theorem \ref{thm:ls_oracle_inequality} we can easily deduce learning rates for the estimator (\ref{eqn:def_LSVM}) by simply specifying appropriate values for the regularization parameter $\lambda$ and the bandwidth $\gamma$.
\begin{cor}\label{cor:ls_learning_rates}
Let the assumptions of Theorem \ref{thm:ls_oracle_inequality} be satisfied with the number of cells specified as $m=\lceil n^\sigma\rceil$ for some $\sigma <1$. Assume that $|f_{L,\P}^*|_{C^{k,\beta}(\R^d)}\leq C_1$, $\norm{f_{L,\P}^*}_{L_2(\R^d)}\leq C_2$, and $\norm{f_{L,\P}^*}_{L_\infty(\R^d)}\leq C_3$ for some finite constants $C_1,C_2,C_3$ and that the parameters from Theorem \ref{thm:ls_oracle_inequality} satisfy
\begin{equation*}
\sigma<\min\left\{\frac{\varrho}{2\alpha+\varrho},\frac{\varrho}{\delta}\right\}.
\end{equation*}
Setting $\gamma=n^{-a} $ and $ \lambda=n^{-b}$ with $a=1/(2\alpha+\varrho) $ and $ b\geq \sigma+(2\alpha+d)/(2\alpha+\varrho)$ there exists a constant $C>0$ only depending on $C_1,C_2,C_3,C_\mu,C_S$ and $M$ such that for all $n>1$ and $\tau\geq 1$ we have
\begin{align*}
\Rk_{L,\P}(\fftsvm)-\Rk_{L,\P}^*\leq C\tau\,n^{-\frac{2\alpha}{2\alpha+\varrho}}\log^{d+1}n,
\end{align*}
with probability $\P^n$ not less than $1-\e^{-\tau}$.
\end{cor}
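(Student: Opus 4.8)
The plan is to substitute the prescribed choices $m=\lceil n^\sigma\rceil$, $\gamma=n^{-a}$ with $a=1/(2\alpha+\varrho)$ and $\lambda=n^{-b}$ with $b\ge\sigma+(2\alpha+d)/(2\alpha+\varrho)$ into the oracle inequality of Theorem~\ref{thm:ls_oracle_inequality}, check that these choices are admissible, bound each of the four resulting terms by $\O\big(\tau\,n^{-2\alpha/(2\alpha+\varrho)}\log^{d+1}n\big)$, and finally clean up the confidence level.

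First I would verify admissibility. We have $\lambda=n^{-b}\in(0,1)$ since $b>0$ and $n>1$. For the bandwidth constraint $\gamma\in(0,m^{-1/\varrho}]$ note $m=\lceil n^\sigma\rceil\le 2n^\sigma$, hence $m^{-1/\varrho}\ge 2^{-1/\varrho}n^{-\sigma/\varrho}$; since the hypothesis $\sigma<\varrho/(2\alpha+\varrho)$ is exactly $\sigma/\varrho<a$, the inequality $\gamma=n^{-a}\le m^{-1/\varrho}$ holds for all $n$ beyond some threshold $n_0$ depending only on the fixed parameters. For the finitely many $n\in(1,n_0]$ the bound is trivially true after enlarging $C$, using that clipping at $M$ together with $Y\subset[-M,M]$ forces $\Rk_{L,\P}(\fftsvm)-\Rk_{L,\P}^*\le 4M^2$, while $\tau\,n^{-2\alpha/(2\alpha+\varrho)}\log^{d+1}n$ is bounded below on any bounded range of $n\ge2$ and $\tau\ge1$.

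Next, the term-by-term estimates, which are routine once one spots the key algebraic simplification $\lambda^{-1/\log n}=n^{b/\log n}=\e^{b}$, i.e.\ this factor is a constant. The approximation term is exactly $c_2|f_{L,\P}^*|_{C^{k,\beta}(\R^d)}^2 n^{-2\alpha a}=c_2|f_{L,\P}^*|_{C^{k,\beta}(\R^d)}^2 n^{-2\alpha/(2\alpha+\varrho)}$. The sample error term equals $c_{m,n}K\e^{b}\gamma^{-\varrho}n^{-1}\log^{d+1}n$ with $\gamma^{-\varrho}n^{-1}=n^{\varrho/(2\alpha+\varrho)-1}=n^{-2\alpha/(2\alpha+\varrho)}$, so it matches the target rate up to the factor $c_{m,n}K\e^{b}$. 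The first term satisfies $c_1\norm{f_{L,\P}^*}_{L_2(\R^d)}^2 m\lambda\gamma^{-d}\le 2c_1\norm{f_{L,\P}^*}_{L_2(\R^d)}^2 n^{\sigma-b+d/(2\alpha+\varrho)}$, and the lower bound on $b$ is precisely what makes the exponent $\le-2\alpha/(2\alpha+\varrho)$. Finally $c_3\tau/n\le c_3\tau\,n^{-2\alpha/(2\alpha+\varrho)}$ because $\varrho>0$ gives $2\alpha/(2\alpha+\varrho)\le1$, and since $\log^{d+1}n\ge\log^{d+1}2$ for $n\ge2$ this is $\le(c_3/\log^{d+1}2)\,\tau\,n^{-2\alpha/(2\alpha+\varrho)}\log^{d+1}n$.

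The one step that needs a genuine (if short) argument, and the one I expect to be the \emph{main obstacle}, is showing that $c_{m,n}$ is bounded uniformly in $n$ by a constant $c_4$ depending only on $C_S,C_\mu,M$ and the fixed exponents. The only dangerous piece of $c_{m,n}$ is $m^{2}\exp\big(-\log 2\cdot C_\mu^{-1}C_S^{-\delta}\,n\,m^{-\delta/\varrho}/\log n\big)$. Using $m\le 2n^\sigma$ one gets $m^{-\delta/\varrho}\ge 2^{-\delta/\varrho}n^{-\sigma\delta/\varrho}$, so the magnitude of the exponent is $\gtrsim n^{1-\sigma\delta/\varrho}/\log n$, and the assumption $\sigma<\varrho/\delta$ guarantees $1-\sigma\delta/\varrho>0$; hence the exponent grows faster than any power of $\log n$, while $m^2\le 4n^{2\sigma}$ grows only polynomially, so the whole product tends to $0$ and is in particular bounded over $n\ge2$. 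With $c_{m,n}\le c_4$ established, summing the four bounds yields the asserted estimate with probability at least $1-3\e^{-\tau}$; replacing $\tau$ by $\tau+\log3$ throughout — which only costs a factor $1+\log3$ since $\tau\ge1$ — upgrades the confidence to $1-\e^{-\tau}$ and completes the proof.
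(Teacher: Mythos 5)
Your proof is correct and follows essentially the same route as the paper: substitute the prescribed $m,\lambda,\gamma$ into Theorem~\ref{thm:ls_oracle_inequality}, observe $\lambda^{-1/\log n}=\e^b$, match each term to the target rate via the choices of $a$ and $b$, and use $\sigma<\varrho/\delta$ to bound $c_{m,n}$ uniformly in $n$. You are in fact a bit more careful than the paper on two routine points that it silently omits, namely the admissibility of $\gamma\le m^{-1/\varrho}$ for small $n>1$ (absorbed into $C$ via the trivial bound $\Rk_{L,\P}(\fftsvm)-\Rk_{L,\P}^*\le 4M^2$) and the upgrade from confidence $1-3\e^{-\tau}$ to $1-\e^{-\tau}$ by replacing $\tau$ with $\tau+\log 3$.
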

Note that the learning rate in Corollary \ref{cor:ls_learning_rates} coincides up to the logarithmic factor with the well-known optimal rate \cite{Stone_GlobalOptimalRatesOfConvergenceForNonparametricRegression}, which covers the case $\varrho=d$. In \cite[Remark 3.6]{HammSteinwart_IntrinsicDimension} it is described how to leverage this result to the case where $\varrho<d$ is an integer and deduce optimality of the rate in Corollary \ref{cor:ls_learning_rates} in these cases. In the elementary setting where $d=\varrho=\delta$ the constraint in Corollary \ref{cor:ls_learning_rates} becomes $\sigma< d/(2\alpha+d)$ which is similar to the constraint in \cite[Theorem 5]{MeisterSteinwartLSVMsrates} on the radii of the cells.

As the choice of $\lambda$ and $\gamma$ in the corollary above requires knowledge on the unknown parameters $\varrho$ and $\alpha$ of the distribution $\P$, we also present the following theorem, which states that the training validation procedure (\ref{eqn:def_training_validation}) achieves the same rates of Corollary \ref{cor:ls_learning_rates} without any knowledge on $\P$.
\begin{thm}\label{thm:ls_adaptive_rates}
Let the assumptions of Theorem \ref{thm:ls_oracle_inequality} be satisfied with $\varrho\geq 1$ and the number of cells specified as $m=\lceil n^\sigma\rceil$ for some $\sigma < 1$. Let $A_n$ be a minimal $1/\log n$-net of $(0,1]$ with $1\in A_n$ and let $B_n$ be a minimal $1/\log n$-net of $[\sigma+1,\sigma+d]$ with $\sigma+d\in B_n$ and set $\Gamma_n :=\{n^{-a}:a\in A_n \}$ and $\Lambda_n:=\{ n^{-b}:b\in B_n \}$. Assume that $|f_{L,\P}^*|_{C^{k,\beta}(\R^d)}\leq C_1$, $\norm{f_{L,\P}^*}_{L_2(\R^d)}\leq C_2$, and $\norm{f_{L,\P}^*}_{L_\infty(\R^d)}\leq C_3$ for some finite constants $C_1,C_2,C_3$ and that the parameters from Theorem \ref{thm:ls_oracle_inequality} satisfy
\begin{equation*}
\sigma<\min\left\{\frac{\varrho}{2\alpha+\varrho},\frac{\varrho}{\delta}\right\}.
\end{equation*}
Then there exists a constant $C>0$ only depending on $C_1,C_2,C_3,C_\mu,C_S$ and $M$ such that for all $n>\exp \frac{2\varrho}{\varrho-\sigma}$ and $\tau\geq 1$ we have
\begin{equation*}
\Rk_{L,\P}(\fftsvmcv)-\Rk_{L,\P}^*\leq C\tau\,n^{-\frac{2\alpha}{2\alpha+\varrho}}\log^{d+1}n
\end{equation*}
with probability $\P^n$ not less than $1-\e^{-\tau}$ where the hyperparameters $ \blambda_{D_2},\bgamma_{D_2}$ are selected by the training validation procedure (\ref{eqn:def_training_validation}).
\end{thm}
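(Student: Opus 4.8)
The plan is to derive the adaptive rate from the oracle inequality in Theorem \ref{thm:ls_oracle_inequality} combined with a standard training-validation (hold-out) argument, of the type developed in \cite{SteinwartChristmannSVMs} and adapted to the localized setting in \cite{MeisterSteinwartLSVMsrates, BlaschzykSteinwart_ImprovedClassificationRatesForLocalizedSVMs}. Concretely, the first step is to record a generic oracle inequality for the hold-out estimator: since $\Lls$ can be clipped at $M$ and, conditionally on $D_1$, the decision functions $\wideparen f_{D_1,\blambda,\bgamma}$ form a fixed finite family indexed by $\Lambda_n^m\times\Gamma_n^m$, a union bound over this family together with Bernstein's inequality (applied to the validation sample $D_2$, which has $n-l\geq (n-2)/2$ points) yields that with $\P^n$-probability at least $1-e^{-\tau}$,
\begin{equation*}
\Rk_{L,\P}(\fftsvmcv)-\Rk_{L,\P}^*\leq 6\min_{(\blambda,\bgamma)\in\Lambda_n^m\times\Gamma_n^m}\left(\Rk_{L,\P}(\wideparen f_{D_1,\blambda,\bgamma})-\Rk_{L,\P}^*\right)+C\,\frac{M^2\bigl(\tau+\log|\Lambda_n^m\times\Gamma_n^m|\bigr)}{n}.
\end{equation*}
Here the key point, already emphasized in the text after \eqref{eqn:def_training_validation}, is that because the localized estimator decouples over cells the effective cardinality in the union bound is $m\,|\Lambda_n\times\Gamma_n|$ rather than $|\Lambda_n\times\Gamma_n|^m$; since $|A_n|,|B_n|\in\O(\log n)$ and $m\leq n$, the penalty term is $\O\bigl(M^2(\tau+\log^2 n)/n\bigr)$, which is of lower order than the target rate $n^{-2\alpha/(2\alpha+\varrho)}\log^{d+1}n$.

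The second step is to show that the minimum over the grid is, up to constants, as good as the optimal deterministic choice $\gamma^\star=n^{-a^\star}$, $\lambda^\star=n^{-b^\star}$ from Corollary \ref{cor:ls_learning_rates}, with $a^\star=1/(2\alpha+\varrho)\in(0,1]$ and $b^\star=\sigma+(2\alpha+d)/(2\alpha+\varrho)$. Note $b^\star\in[\sigma+1,\sigma+d]$ because $2\alpha+d\geq 2\alpha+\varrho$ (so $b^\star\leq\sigma+d$, using $\varrho\leq d$) and $2\alpha+d\leq d(2\alpha+\varrho)$ when $\varrho\geq 1$ (so $b^\star\geq\sigma+1$); this is precisely why the net $B_n$ is taken over $[\sigma+1,\sigma+d]$ and why the hypothesis $\varrho\geq 1$ appears. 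Since $A_n$ is a $1/\log n$-net of $(0,1]$ and $B_n$ a $1/\log n$-net of $[\sigma+1,\sigma+d]$, there exist $\hat a\in A_n$, $\hat b\in B_n$ with $|\hat a-a^\star|\leq 1/\log n$ and $|\hat b-b^\star|\leq 1/\log n$; writing $\hat\gamma=n^{-\hat a}$, $\hat\lambda=n^{-\hat b}$ we get $\hat\gamma=\gamma^\star n^{\pm 1/\log n}\cdot$(bounded) $=\Theta(\gamma^\star)$ and likewise $\hat\lambda=\Theta(\lambda^\star)$, since $n^{1/\log n}=e$. One must also check $\hat\gamma\leq m^{-1/\varrho}$, i.e. that $\hat\gamma$ is admissible in Theorem \ref{thm:ls_oracle_inequality}: this holds for $n$ large because $\gamma^\star=n^{-1/(2\alpha+\varrho)}$ and $m^{-1/\varrho}\geq n^{-\sigma/\varrho}$ with $\sigma<\varrho/(2\alpha+\varrho)$, and the small correction factor $e$ is absorbed precisely when $n>\exp\frac{2\varrho}{\varrho-\sigma}$ (the threshold in the statement). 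Plugging $(\hat\lambda,\hat\gamma)$ into Theorem \ref{thm:ls_oracle_inequality} and using $\lambda^{-1/\log n}=\Theta(1)$ for $\lambda\in(0,1)$, the four terms of the oracle bound become, up to constants, the same as those evaluated at $(\lambda^\star,\gamma^\star)$, hence $\O\bigl(\tau\,n^{-2\alpha/(2\alpha+\varrho)}\log^{d+1}n\bigr)$ exactly as in Corollary \ref{cor:ls_learning_rates}.

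The third and final step is bookkeeping: combine the hold-out inequality of Step 1 (applied with $D_1$ of size $l=\lfloor n/2\rfloor+1$ and $D_2$ of size $n-l$) with the bound from Step 2 applied to $D_1$ (which has $\Theta(n)$ points, so the rate in $l$ equals the rate in $n$ up to constants), intersect the two probability events by a union bound, and collect constants into a single $C$ depending only on $C_1,C_2,C_3,C_\mu,C_S,M$ (the combinatorial and dimensional constants $c_1,c_2,c_3,c_{m,n},K$ of Theorem \ref{thm:ls_oracle_inequality} depend only on these and on $d,k,\alpha$, and $c_{m,n}$ is uniformly bounded by a constant of this form once $nm^{-\delta/\varrho}\to\infty$, which holds since $\sigma<\varrho/\delta$). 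I expect the main obstacle to be the careful verification in Step 2 that the grid point $\hat b$ genuinely lands in the interval $[\sigma+1,\sigma+d]$ and that the resulting $\hat\gamma$ stays below $m^{-1/\varrho}$ with the stated explicit threshold on $n$ — everything else is a routine concatenation of an oracle inequality with a hold-out bound, but getting the constants and the range of $b$ right (and seeing exactly where $\varrho\geq 1$ is used) requires some care.
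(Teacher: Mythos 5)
Your overall approach matches the paper's: apply the finite-family ERM oracle inequality (\cite[Theorem 7.2]{SteinwartChristmannSVMs}) to the hold-out step, then insert a grid point close to the optimal deterministic choice of Corollary \ref{cor:ls_learning_rates} into the oracle inequality of Theorem \ref{thm:ls_oracle_inequality} applied to $D_1$. Two points deserve a closer look.

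First, the cardinality in the validation penalty. You assert that the union bound runs over $m\,|\Lambda_n\times\Gamma_n|$ models ``because the localized estimator decouples over cells,'' giving a penalty of order $M^2(\tau+\log^2 n)/n$. The remark after \eqref{eqn:def_training_validation} only says that $m\,|\Lambda_n\times\Gamma_n|$ is the number of \emph{training runs}, a computational statement; the statistical union bound in \cite[Theorem 7.2]{SteinwartChristmannSVMs} is taken over the full product family $\Lambda_n^m\times\Gamma_n^m$, and there is no obvious way to decouple it per cell without paying the same total price. The paper therefore obtains the penalty $\O\bigl(M^2(\tau+m\log\log n)/n\bigr)$, which is of order $n^{\sigma-1}\log\log n$; this is also what Remark \ref{rmk:speedup_accuracy_tradeoff} says is the bottleneck. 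The conclusion of your Step 1 survives, but only after you add the observation that $m/n\leq 2n^{\sigma-1}\leq 2n^{-2\alpha/(2\alpha+\varrho)}$ (which follows from $\sigma<\varrho/(2\alpha+\varrho)$), so the correct $m\log\log n/n$ penalty is still absorbed by the target rate. Without that extra step the bound you write down does not follow from the cited inequality.

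Second, the choice of $\lambda$. You pick $\hat b$ near $b^\star=\sigma+(2\alpha+d)/(2\alpha+\varrho)$ and carefully check $b^\star\in[\sigma+1,\sigma+d]$ (your two parenthetical justifications are, incidentally, swapped: $\varrho\leq d$ gives $b^\star\geq\sigma+1$ and $\varrho\geq 1$ gives $b^\star\leq\sigma+d$). This works, but the paper's proof takes a shortcut: Corollary \ref{cor:ls_learning_rates} only requires $b\geq\sigma+(2\alpha+d)/(2\alpha+\varrho)$, and the right endpoint $b=\sigma+d$, which is guaranteed to lie in $B_n$ by assumption, already satisfies this once $\varrho\geq 1$. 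Picking $\lambda=n^{-\sigma-d}$ outright avoids the net argument in $b$ entirely; the only place where the net genuinely matters is in $a$, where the paper picks $a_*\in A_n$ with $1/(2\alpha+\varrho)\leq a_*\leq 1/(2\alpha+\varrho)+2/\log n$, ensuring $\gamma=n^{-a_*}\leq m^{-1/\varrho}$ for $n>\exp\tfrac{2\varrho}{\varrho-\sigma}$.
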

\begin{rmk}\label{rmk:speedup_accuracy_tradeoff}
The constraint on $\sigma$ in Corollary \ref{cor:ls_learning_rates} and Theorem \ref{thm:ls_adaptive_rates} can be interpreted as follows: The user specifies a value for $\sigma$ depending on some computational time and space constraints. The condition on $\sigma$ then specifies the set of distributions for which we can achieve the optimal learning rate. The smaller we choose $\sigma$, the larger this class of distributions becomes but small values of $\sigma$ in turn diminish the computational speed up. As a consequence we have a fundamental trade-off between computational benefit and the fastest achievable learning rate. Indeed, the fastest possible learning rate is given by $n^{\sigma-1}$, which can be seen by the bounds on the statistical error of the validation step in the proof of Theorem \ref{thm:ls_adaptive_rates}. A very similar trade-off was observed for least-squares kernel regression using random features \cite{RudiRosasco_RandomFeatures}, Nystr\"om \cite{RudiCamorianoRosasco_NystromRegularization} subsampling, and random chunking \cite{ZhangDuchiWainwright_DivideAndConquerKernelRidgeRegression} as speed up strategies. In all these articles the authors consider a more abstract setting of general kernels with assumptions on the decay of the eigenvalues of the corresponding integral operator, however, they focus on the restrictive case where the Bayes decision function is assumed to be contained in the considered RKHS. None of these mentioned articles consider adaptive hyperparameter selection for achieving the same rates without knowledge on the data generating distribution.
\end{rmk}
%
%
%
\section{Binary Classification}\label{sec:classification}
Throughout this section let $Y=\{-1,1\}$ and fix a version $\eta:X\rightarrow[0,1]$ of the posterior probability of $\P$, that is the probability measures $\P(\,\cdot\,|x)$ on $Y$ defined by $\P(\{1\}|x)=\eta(x)$ for $x\in X$ fulfill
\begin{align*}
\P(A\times B)=\int_A \P(B|x) \d\mu(x)
\end{align*}
for all measurable sets $A\subset X$ and $B\subset Y$. With the posterior probability $\eta$ the optimal labeling strategy $f_{\Lclass,\P}^*$ can be expressed by $f_{\Lclass,\P}^*(x)=\mathrm{sign}(2\eta(x)-1)$, cf.~\cite[Example 2.4]{SteinwartChristmannSVMs}.

One common way in binary classification to formulate regularity assumptions on $\P$ is to impose smoothness assumptions on the conditional probability function $\eta$. However, this type of regularity assumption is mainly suited for plug-in classifiers, which first compute a regression estimator $\widehat{\eta}$ of $\eta$ and then use $\mathrm{sign}(2\widehat{\eta}(x)-1) $ as final estimator. In contrast, we will describe our regularity assumptions by so-called margin or noise conditions. To this end, first note that it is intuitively hard to make a correct prediction for $x\in X$ if $\eta(x)\approx 1/2$. The following assumption captures this intuition by restricting the mass of points $x$ such that $\eta(x)$ is close to $1/2$. 
\begin{ass}\label{as:TNC}
There exist constants $C_*>0$ and $q\in[0,\infty]$ such that
\begin{align*}
\P_X\big(\{x\in X: |2\eta(x)-1|<t\}\big)\leq (C_* t)^q \quad \text{for all } t\geq 0.
\end{align*}
\end{ass}
Assumption \ref{as:TNC} is in the literature usually known as Tsybakov noise condition, see \cite{MammenTsybakov_SmoothDiscriminationAnalysis, Tsybakov_OptimalAggregationOfClassifiersInStatisticalLearning}. Note that Assumption \ref{as:TNC} only restricts the total mass of points $x$ with $\eta(x)\approx 1/2$ and not their location. Our second regularity assumption will additionally incorporate the distance of a point $x\in X$ to the decision boundary. To this end, we need the following
\begin{dfn}
Let $X_{-1}:=\{x\in X:\eta(x)<1/2\}$ and $X_1:=\{x\in X:\eta(x)>1/2\}$ and define
\begin{align*}
\Delta(x):=\begin{cases}
\dist(x,X_1) &\text{ if } x\in X_{-1},\\
\dist(x,X_{-1}) &\text{ if } x\in X_1,\\
0&\text{ else,}
\end{cases}
\end{align*}
where $\dist(x,A):=\inf_{y\in A}\norm{x-y}$.
\end{dfn}
Our second regularity assumption then reads as follows:
\begin{ass}\label{as:MNE}
There exist constants $C_{**}>0$ and $\beta >0$ such that
\begin{align*}
\int_{\{x\in X:\Delta(x)<t\}}|2\eta(x)-1|\d\P_X(x)\leq C_{**}t^\beta \quad \text{for all } t\geq 0.
\end{align*}
\end{ass}
Assumption \ref{as:MNE} was introduced in \cite{SteinwartScovel_FastRatesForSupportVectorMachinesUsingGaussianKernels} in a slightly different form and later in the presented form in \cite[Definition 8.15]{SteinwartChristmannSVMs}. Assumption \ref{as:MNE} was also used in \cite{HammSteinwart_IntrinsicDimension} to derive learning rates for classification based on the intrinsic (box-counting) dimension of the support of the data generating distribution. For a short instructive introduction to Assumptions \ref{as:TNC} and \ref{as:MNE}, including some examples, we also refer to \cite[Section 4]{HammSteinwart_IntrinsicDimension}.

The following theorem states states an oracle inequality for the estimator (\ref{eqn:def_LSVM}) using the hinge loss, which is the first main result in this section.
\begin{thm}\label{thm:class_oracle_inequality}
Let $\P$ satisfy Assumption \ref{ass:assouad_dim_ast} for the constants $C_S$ and $\varrho$ and Assumption \ref{ass:marginal} for the constants $C_\mu$ and $\delta$. Further let Assumption \ref{as:TNC} be satisfied for the constants $C_*$ and $q$ and Assumption \ref{as:MNE} for the constants $C_{**}$ and $\beta$. Consider the estimator $\fftsvm$ using the hinge loss $L=\Lhinge$ for some $m\leq n$ and hyperparameters $\lambda_1=\ldots=\lambda_m=:\lambda$ and $\gamma_1=\ldots=\gamma_m=:\gamma$. Then for all $\tau>0, n> 1,\lambda\in(0,1] $, and $ \gamma\in[n^{-1/\varrho},m^{-1/\varrho}]$ we have
\begin{align*}
&\Rk_{L,\P}(\fftsvm)-\Rk_{L,\P}^* \\
\leq& c_1m\lambda\gamma^{-d}+c_2C_{**}\gamma^\beta+c_{m,n}K\lambda^{-1/\log n}\left( \frac{\gamma^{-\varrho}}{n}\right)^\frac{q+1}{q+2}\log^{d+1}n \\
&+3C_*^\frac{q}{q+2}\left(\frac{432\tau}{n} \right)^\frac{q+1}{q+2}+30\frac{\tau}{n}
\end{align*}
with probability not less than $1-3\e^{-\tau}$, where $c_1=3^{d+2}/\Gamma(d/2+1)$,
\begin{align*}
c_2&=9\frac{2^{1-\beta/2}\Gamma\left( \frac{\beta+d}{2}\right)}{\Gamma(d/2)}, \\
c_{m,n}&=C_S^{\varrho+1}\left(1+m^2\exp\left( -\log 2 C_\mu^{-1} C_S^{-\varrho}nm^{-\delta/\varrho}/\log n\right)\right)\max\left\{ C_*^{q/(q+1)},1\right\},
\end{align*}
and $K$ is a constant independent of $\P,\lambda,\gamma,n$, and $m$.
\end{thm}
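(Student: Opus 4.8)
The plan is to follow the standard oracle-inequality machinery for clippable losses — in the spirit of \cite[Theorem 7.23]{SteinwartChristmannSVMs} — but applied to the localized RKHS $\LRKHS$ and combined with the intrinsic-dimension estimates that Assumption \ref{ass:assouad_dim_ast} provides. The decomposition into an \emph{approximation error} term and a \emph{stochastic error} term is the backbone. For the approximation error we need a good competitor function $f_0 \in \LRKHS$: since the hinge loss is calibrated to classification and we have the margin-type Assumption \ref{as:MNE}, the natural choice is (a localized version of) the function used in \cite{SteinwartScovel_FastRatesForSupportVectorMachinesUsingGaussianKernels}, namely something like $x \mapsto \mathrm{sign}(2\eta(x)-1)\,g(\Delta(x)/\gamma)$ convolved against the Gaussian on each cell, whose $H_{\gamma_j}(A_j)$-norm scales like $\gamma^{-d}$ (per cell, with $\lambda_j = \lambda$) and whose excess hinge risk is controlled by $C_{**}\gamma^\beta$ via Assumption \ref{as:MNE}. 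Summing the norm contributions over the $m$ cells produces the $c_1 m\lambda\gamma^{-d}$ term, while the risk bound, being global, yields $c_2 C_{**}\gamma^\beta$. This part should largely parallel the global analysis; the only genuinely new bookkeeping is that the partition is data-dependent, so one must either work conditionally on the centers or absorb the dependence — but note $f_0$ only depends on $\P$ and $\gamma$, not on the realized partition, and restriction to a cell can only decrease the relevant RKHS norm, so this is benign.

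The stochastic error is where the intrinsic dimension enters. One invokes a Bernstein-type concentration inequality for the empirical process over the ball $\{f \in \LRKHS : \norm{f}_{\LRKHS} \le r\}$, for which the key input is a bound on the entropy numbers of this ball. Here the hinge loss gives a variance bound with exponent $(q+1)/(q+2)$ (via the Tsybakov condition, Assumption \ref{as:TNC} / \cite[Theorem 8.24]{SteinwartChristmannSVMs}), and the entropy numbers of $B_{\LRKHS}$ — equivalently of a sum of $m$ Gaussian RKHS balls over cells — need to be estimated in terms of $\varepsilon_i(B_{H_\gamma})$ and then converted, using Assumption \ref{ass:assouad_dim_ast}, from the ambient $d$ to the intrinsic $\varrho$. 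The crucial point is that the empirical $L_2(D)$-entropy of the Gaussian RKHS ball, restricted to a set whose metric entropy grows like $\varepsilon^{-\varrho}$, inherits the exponent $\varrho$ rather than $d$; this is exactly the mechanism from \cite{HammSteinwart_IntrinsicDimension} and is what replaces $d$ by $\varrho$ inside the $(\gamma^{-\varrho}/n)^{(q+1)/(q+2)}$ rate. Solving the resulting fixed-point inequality for $r$ and collecting terms gives the stochastic term together with the $\tau/n$ and $(\tau/n)^{(q+1)/(q+2)}$ remainders.

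Two technical obstacles deserve care. First, the constant $c_{m,n}$ carries the factor $1 + m^2\exp(-\log 2\, C_\mu^{-1} C_S^{-\varrho} n m^{-\delta/\varrho}/\log n)$, which clearly comes from a union bound over the $m$ cells combined with a lower bound on the number of sample points per cell: one uses Assumption \ref{ass:marginal} to show $\mu(A_j) \gtrsim C_\mu^{-1}(\text{cell radius})^\delta$, then the FFT guarantee (\cite[Theorem 4.3]{Har-Peled_GeometricApproximationAlgorithms}) to control the cell radii by $m^{-1/\varrho}$ up to constants, and finally a binomial tail bound so that with high probability every cell has $\Omega(n m^{-\delta/\varrho})$ points — the bad event contributes the exponential correction. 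Managing this conditioning cleanly — so that the per-cell oracle inequalities can be glued into one statement for $\fftsvm$ — is the main bottleneck, and the restriction $\gamma \in [n^{-1/\varrho}, m^{-1/\varrho}]$ (as opposed to merely $\gamma \le m^{-1/\varrho}$ in the regression theorem) is presumably what is needed to keep the per-cell sample sizes and entropy estimates compatible. Second, one must track the $\lambda^{-1/\log n}$ factor; this is the familiar artifact of bounding $\norm{f_{D,\blambda,\bgamma}}_\infty$ via the RKHS norm and the kernel bound when one does not assume the Bayes function lies in the RKHS, and it is harmless since $\lambda^{-1/\log n} = \exp(\tfrac{1}{\log n}\log\tfrac1\lambda)$ stays bounded for polynomially small $\lambda$. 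I would carry out the argument by first proving a single-cell oracle inequality with explicit constants, then performing the union bound and the sample-size concentration, and finally optimizing — leaving the actual numerical values of $c_1, c_2$ to a routine computation with the Gamma-function identities for Gaussian RKHS norms.
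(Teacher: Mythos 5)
Your high-level plan matches the paper: clip, invoke the oracle machinery of \cite[Theorem~7.23]{SteinwartChristmannSVMs} via the abstract Theorem~\ref{thm:general_oracle_inequality}, bound the approximation error by a Gaussian competitor, and bound the expected $L_2(D)$-entropy numbers of $B_{\LRKHS}$ through Assumption~\ref{ass:assouad_dim_ast} with a good/bad event split. But three of your technical explanations point in the wrong direction, and the second one would derail a detailed write-up.

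First, the origin of $c_{m,n}$ is not a ``binomial tail bound so that with high probability every cell has $\Omega(n m^{-\delta/\varrho})$ points.'' No per-cell sample-count control is used anywhere. The event that gets decomposed (in Corollary~\ref{cor:entroby_bound}) is the \emph{geometric} event $Z$ that every Voronoi cell satisfies $A_j\cap S\subset B_{6\varepsilon_m(S)}(c_j)$. On $Z$ one bounds $\cN(A_j\cap S,\gamma)$ through Assumption~\ref{ass:assouad_dim_ast} with $r=6\varepsilon_m(S)\lesssim m^{-1/\varrho}$; on $Z^C$ one falls back to the global covering number and pays $\mu^n(Z^C)$. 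That probability is controlled via Lemma~\ref{lem:empirical_distance}, whose proof lower-bounds $\mu(B_r(x))$ by Assumption~\ref{ass:marginal} and takes a union bound over the $m$ net points (not over the cells), giving $m\exp(-C_\mu^{-1}n\varepsilon_m(S)^\delta)$, combined with the FFT $2$-approximation guarantee. This is where the exponential in $c_{m,n}$ comes from; your proposed route would produce a different (and not obviously sufficient) estimate.

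Second, the variance bound exponent for the hinge loss under Assumption~\ref{as:TNC} is $\vartheta = q/(q+1)$ with $V = 6C_*^{q/(q+1)}$ (\cite[Theorem~8.24]{SteinwartChristmannSVMs}), not $(q+1)/(q+2)$. The exponent $(q+1)/(q+2)$ only emerges after combining $\vartheta$ with the parameter $p$ in the oracle inequality: $1/(2-p-\vartheta+\vartheta p) = (q+1)/(q+2-p)$, and then setting $p = \log 2/(2\log n)$. If you carried the wrong $\vartheta$ into the fixed-point computation the exponents would not come out correctly. Third, the constraint $\gamma\ge n^{-1/\varrho}$ has nothing to do with ``per-cell sample sizes''; it is simply needed so that $\gamma^{-\varrho}/n\le 1$, which lets one replace the exponent $(q+1)/(q+2-p)$ by the smaller $(q+1)/(q+2)$ while keeping an upper bound. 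Finally, a minor simplification worth knowing: the paper chooses a single global $f_0\in H_\gamma(X)$ (via \cite[Eq.~(8.15)]{SteinwartChristmannSVMs}) and uses $\norm{f_0}_{\LRKHS}^2 \le m\lambda\norm{f_0}_{H_\gamma(X)}^2$ — since restriction to each $A_j$ is norm-nonincreasing by Corollary~\ref{cor:restricted_kernel} — rather than assembling a competitor cell by cell; this keeps the approximation estimate independent of the data-dependent partition.
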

As in the last section, we can now derive learning rates for the estimator (\ref{eqn:def_LSVM}) simply by specifying values for the regularization parameter $\lambda$ and the bandwidth $\gamma$.
\begin{cor}\label{cor:class_learning_rates}
Let the assumptions of Theorem \ref{thm:class_oracle_inequality} be satisfied with the number of cells specified as $m=\lceil n^\sigma\rceil$ for some $\sigma< 1$. Assume the parameters from Theorem \ref{thm:class_oracle_inequality} satisfy
\begin{equation*}
\sigma < \min\left\{ \frac{\varrho(q+1)}{\beta(q+2)+\varrho(q+1)},\frac{\varrho}{\delta}\right\}.
\end{equation*}
Setting $\gamma_n=n^{-a}$ and $\lambda_n=n^{-b}$ with
\begin{equation*}
a= \frac{q+1}{\beta(q+2)+\varrho(q+1)}  \quad \text{ and } \quad  b\geq\sigma + \frac{(d+\beta)(q+1)}{\beta(q+2)+\varrho(q+1)}
\end{equation*}
there exists a constant $C>0$ only depending on $C_*, C_{**}, C_\mu$, and $C_S$ such that for all $n>1$ and $\tau \geq 1$ we have
\begin{align*}
\Rk_{L,\P}(\fftsvm)-\Rk_{L,\P}^*\leq C\tau\, n^{-\frac{\beta(q+1)}{\beta(q+2)+\varrho(q+1)}}\log^{d+1}n
\end{align*}
with probability $\P^n$ not less than $1-\e^{-\tau}$.
\end{cor}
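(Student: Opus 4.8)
The plan is to insert the prescribed values $\gamma=n^{-a}$ and $\lambda=n^{-b}$ directly into the oracle inequality of Theorem~\ref{thm:class_oracle_inequality} and to bound each of the five resulting summands by $C\tau\,n^{-\frac{\beta(q+1)}{\beta(q+2)+\varrho(q+1)}}\log^{d+1}n$. Before that I would check admissibility of $(\lambda,\gamma)$, i.e.\ $\lambda\in(0,1]$ and $\gamma\in[n^{-1/\varrho},m^{-1/\varrho}]$ for $n$ large. Since $\beta(q+2)\ge0$ we have $a\le 1/\varrho$, so $\gamma=n^{-a}\ge n^{-1/\varrho}$; $\lambda=n^{-b}\le1$ holds because $b>0$; and $\gamma\le m^{-1/\varrho}$, which amounts to $m\le n^{a\varrho}$, follows from $m=\lceil n^\sigma\rceil\le n^\sigma+1$ together with the first constraint $\sigma<\varrho a=\frac{\varrho(q+1)}{\beta(q+2)+\varrho(q+1)}$, valid once $n$ exceeds a threshold depending only on $\sigma,q,\beta,\varrho$. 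I would also pass from confidence $1-3\e^{-\tau}$ to $1-\e^{-\tau}$ by replacing $\tau$ with $\tau+\log 3\le3\tau$ for $\tau\ge1$, absorbing the factor into $C$ via $(3\tau)^{\frac{q+1}{q+2}}\le3\tau$.

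Next I would dispose of the two awkward prefactors. First, since $\lambda=n^{-b}$ one has the exact identity $\lambda^{-1/\log n}=n^{b/\log n}=\e^{b}$, so this factor is a constant depending only on $b$. Second, I would show $c_{m,n}$ is bounded uniformly in $n$: with $m=\lceil n^\sigma\rceil\le2n^\sigma$ we get $nm^{-\delta/\varrho}\ge 2^{-\delta/\varrho}n^{1-\sigma\delta/\varrho}$, and the second constraint $\sigma<\varrho/\delta$ makes the exponent $1-\sigma\delta/\varrho$ strictly positive, so $m^2\exp(-\mathrm{const}\cdot nm^{-\delta/\varrho}/\log n)\le4n^{2\sigma}\exp(-\mathrm{const}\cdot n^{1-\sigma\delta/\varrho}/\log n)$ is a bounded function of $n>1$; hence $c_{m,n}\le c(C_S,C_\mu,C_*)$.

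It then remains to match exponents. With $\gamma^{-d}=n^{ad}$, $\gamma^\beta=n^{-a\beta}$, $\gamma^{-\varrho}=n^{a\varrho}$ and $m\le2n^\sigma$, the five summands are, up to constants, $n^{\sigma-b+ad}$, $n^{-a\beta}$, $n^{(a\varrho-1)\frac{q+1}{q+2}}\log^{d+1}n$, $\tau^{\frac{q+1}{q+2}}n^{-\frac{q+1}{q+2}}$ and $\tau n^{-1}$. A short computation gives $a\beta=\frac{\beta(q+1)}{\beta(q+2)+\varrho(q+1)}$ and $a\varrho-1=-\frac{\beta(q+2)}{\beta(q+2)+\varrho(q+1)}$, so $(a\varrho-1)\frac{q+1}{q+2}=-a\beta$ and the second and third terms are exactly of the claimed order, the third carrying the logarithmic factor. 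The choice $b\ge\sigma+\frac{(d+\beta)(q+1)}{\beta(q+2)+\varrho(q+1)}=\sigma+ad+a\beta$ is made precisely so that $\sigma-b+ad\le-a\beta$, controlling the first term. Finally $\frac{q+1}{q+2}\ge a\beta$ and $1>a\beta$ (both reducing to $\varrho(q+1)\ge0$), so the last two terms decay at least as fast; collecting everything yields the asserted bound with $C$ depending only on $C_*,C_{**},C_\mu,C_S$ (and the fixed exponents $\varrho,\delta,q,\beta,d,\sigma$).

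There is no genuine obstacle: the statement is a direct corollary of Theorem~\ref{thm:class_oracle_inequality}, so the ``hard part'' is purely bookkeeping — keeping straight which of the two constraints on $\sigma$ is used where (the first for the admissibility $\gamma\le m^{-1/\varrho}$, the second for the boundedness of $c_{m,n}$), and carrying out the exponent arithmetic correctly, in particular the cancellation $(a\varrho-1)\frac{q+1}{q+2}=-a\beta$ and the collapse of $\lambda^{-1/\log n}$ to a constant.
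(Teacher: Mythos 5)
Your proposal is correct and follows essentially the same route as the paper: plug the prescribed $\lambda,\gamma$ into Theorem~\ref{thm:class_oracle_inequality}, verify admissibility $\gamma\in[n^{-1/\varrho},m^{-1/\varrho}]$ via $\sigma/\varrho\le a\le 1/\varrho$, bound $c_{m,n}$ uniformly using $1-\sigma\delta/\varrho>0$, and match exponents. You are in fact slightly more explicit than the paper (which defers the exponent bookkeeping to ``similar to the proof of Corollary~\ref{cor:ls_learning_rates}''), including spelling out the cancellation $(a\varrho-1)\tfrac{q+1}{q+2}=-a\beta$ and the confidence-level adjustment from $1-3\e^{-\tau}$ to $1-\e^{-\tau}$.
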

For a quick comparison with existing results, we remark that if $\eta$ is $\alpha$-H\"older continuous for some $\alpha\in(0,1]$ and Assumption \ref{as:TNC} is fulfilled for some exponent $q>0$, then Assumption \ref{as:MNE} is fulfilled for the exponent $\beta=\alpha(q+1)$, see \cite[Proposition 4.4]{HammSteinwart_IntrinsicDimension} and the remark thereafter. In this case, the exponent in the learning rate of Corollary \ref{cor:class_learning_rates} is given by $\alpha(q+1)/(\alpha(q+2)+\varrho)$, which is the minimax optimal rate of convergence by \cite[Theorem 4.1]{AudibertTsybakov_FastLearningRatesForPlugInClassifiers}. Also note that in the special case $\varrho=d $ we sligthly improve the results of \cite{ThomannBlaschzykMeisterSteinwart_SpatialDecomposition} and \cite[Theorem 8.26]{SteinwartChristmannSVMs}, where the authors achieve the same rate as in Corollary \ref{cor:class_learning_rates} but with a sub optimality term of the form $n^\epsilon$ with $\epsilon>0$ arbitrarily small instead of our factor of $\log^{d+1}n$. In \cite{ThomannBlaschzykMeisterSteinwart_SpatialDecomposition} the authors analyze the same estimator as the one considered in Corollary \ref{cor:class_learning_rates}, albeit they consider an a priori fixed partition of the input space satisfying some technical assumptions, while we consider a fully data dependent partition. In \cite[Theorem 8.26]{SteinwartChristmannSVMs} an unmodified Gaussian SVM is considered.

The content of the following theorem is that this rate can again be achieved adaptively.
\begin{thm}\label{thm:class_adaptive_rates}
Let the assumptions of Theorem \ref{thm:class_oracle_inequality} be satisfied for $\varrho\geq 1$ and the number of cells specified as $m=\lceil n^\sigma\rceil$ for some $\sigma < 1$. Let $A_n$ be a minimal $1/\log n$-net of $(0,1]$ and let $B_n$ be a minimal $1/\log n$-net of $[\sigma+1,\sigma+d]$ with $\sigma+d\in B_n$ and set $\Gamma_n :=\{n^{-a}:a\in A_n \}$ and $\Lambda_n:=\{ n^{-b}:b\in B_n \}$. Assume that the parameters from Theorem \ref{thm:class_oracle_inequality} satisfy
\begin{equation*}
\sigma < \min\left\{ \frac{\varrho(q+1)}{\beta(q+2)+\varrho(q+1)},\frac{\varrho}{\delta}\right\}.
\end{equation*}
Then there exists a constant $C>0$ only depending on $C_{*,**},C_\mu$ and $C_S$ such that for all $n>\exp \frac{2\varrho+\log 2}{1-\sigma}$ and $\tau\geq 1$ we have
\begin{equation*}
\Rk_{L,\P}(\fftsvmcv)-\Rk_{L,\P}^*\leq C\tau\, n^{-\frac{\beta(q+1)}{\beta(q+2)+\varrho(q+1)}}\log^{d+1}n
\end{equation*}
with probability $\P^n$ not less than $1-\e^{-\tau}$ where the hyperparameters $ \blambda_{D_2},\bgamma_{D_2}$ are selected by the training validation procedure (\ref{eqn:def_training_validation}).
\end{thm}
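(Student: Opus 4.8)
The plan is to reduce Theorem \ref{thm:class_adaptive_rates} to the oracle inequality of Theorem \ref{thm:class_oracle_inequality} combined with a generic bound on the statistical error introduced by the training validation step. Concretely, the strategy mirrors the adaptive-rates argument for least-squares regression (Theorem \ref{thm:ls_adaptive_rates}): split the data into $D_1$ (training, $l=\lfloor n/2\rfloor+1$ points) and $D_2$ (validation), observe that because the localized estimator decomposes into $m$ independent per-cell problems, the grid $\Lambda_n^m\times\Gamma_n^m$ effectively amounts to only $m\,|\Lambda_n\times\Gamma_n|$ candidate decision functions, and then control the excess risk of the empirically selected hyperparameters by a uniform deviation bound over this candidate set. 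I would invoke the standard clipped training-validation oracle inequality (the hinge-loss analogue of \cite[Theorem 7.2]{SteinwartChristmannSVMs}, or the version already implicit in the proof of Theorem \ref{thm:ls_adaptive_rates}): with probability at least $1-\e^{-\tau}$,
\begin{equation*}
\Rk_{L,\P}(\fftsvmcv)-\Rk_{L,\P}^*\leq 6\min_{(\blambda,\bgamma)\in\Lambda_n^m\times\Gamma_n^m}\left(\Rk_{L,\P}(\wideparen{f}_{D_1,\blambda,\bgamma})-\Rk_{L,\P}^*\right)+C\frac{\tau+\log\big(m|\Lambda_n\times\Gamma_n|\big)}{n},
\end{equation*}
using that the hinge loss is bounded by $2$ on clipped arguments (so the variance-vs-expectation comparison needed for Bernstein is trivial with $V=1$, $\vartheta=0$, or can be sharpened via Assumption \ref{as:TNC} if one wants the $(q+1)/(q+2)$ exponent here too — but the crude bound already suffices since the first term carries the optimal rate).

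The next step is to show that the deterministic choice $\gamma=n^{-a^*}$, $\lambda=n^{-b^*}$ from Corollary \ref{cor:class_learning_rates}, with $a^*=(q+1)/(\beta(q+2)+\varrho(q+1))$ and $b^*=\sigma+(d+\beta)(q+1)/(\beta(q+2)+\varrho(q+1))$, is approximated to within acceptable error by some grid point of $\Lambda_n^m\times\Gamma_n^m$. Here I would use that $A_n$ is a $1/\log n$-net of $(0,1]$ and $B_n$ a $1/\log n$-net of $[\sigma+1,\sigma+d]$: since $a^*\in(0,1]$ — which holds because $\beta(q+2)>0$ — there is $a\in A_n$ with $|a-a^*|\leq 1/\log n$, and similarly $b^*\in[\sigma+1,\sigma+d]$ (the lower bound since $(d+\beta)(q+1)/(\beta(q+2)+\varrho(q+1))\geq 1$ iff $d(q+1)\geq \varrho(q+1)$, which I would need to check, adjusting the net interval or the choice of $b^*$ accordingly — one can always take $b^*$ slightly larger, as Corollary \ref{cor:class_learning_rates} only requires $b\geq\sigma+(d+\beta)(q+1)/(\dots)$) yielding $b\in B_n$ with $|b-b^*|\leq 1/\log n$. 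Then $n^{-a}=n^{-a^*}n^{a^*-a}\in[n^{-a^*}\e^{-1},n^{-a^*}\e]$ and likewise for $\lambda$, so plugging $\gamma=n^{-a}$, $\lambda=n^{-b}$ into the oracle inequality of Theorem \ref{thm:class_oracle_inequality} changes every power of $\gamma$ and $\lambda$ only by bounded constants, and the rate exponent is unaffected. I must also verify the admissibility constraint $\gamma\in[n^{-1/\varrho},m^{-1/\varrho}]$: the upper bound $n^{-a}\e\leq m^{-1/\varrho}$ follows for $n$ large from $a>\sigma/\varrho$ (equivalently $\sigma<\varrho a^*=\varrho(q+1)/(\beta(q+2)+\varrho(q+1))$, which is exactly the first hypothesis on $\sigma$), and the lower bound from $a<1/\varrho$ — this is where $\varrho\geq1$ and the $n>\exp\frac{2\varrho+\log2}{1-\sigma}$ threshold enter, ensuring the $\e$-slack is absorbed.

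With this grid point fixed, Theorem \ref{thm:class_oracle_inequality} gives
\begin{equation*}
\Rk_{L,\P}(\wideparen{f}_{D_1,\blambda,\bgamma})-\Rk_{L,\P}^*\leq c_1 m\lambda\gamma^{-d}+c_2C_{**}\gamma^\beta+c_{m,n}K\lambda^{-1/\log n}\left(\frac{\gamma^{-\varrho}}{n}\right)^{\frac{q+1}{q+2}}\log^{d+1}n+\text{(lower order)},
\end{equation*}
evaluated with $n$ replaced by $l\asymp n/2$. The term $\lambda^{-1/\log n}=n^{b/\log n}=\e^{b}$ is a bounded constant since $b\leq\sigma+d$; the factor $c_{m,n}$ is bounded because the condition $\sigma<\varrho/\delta$ forces $nm^{-\delta/\varrho}\to\infty$ faster than $\log n$, so the exponential term vanishes and $c_{m,n}\leq C_S^{\varrho+1}\cdot 2\cdot\max\{C_*^{q/(q+1)},1\}$ for $n$ large; and the three leading powers, after substituting $m=\lceil n^\sigma\rceil$, $\gamma\asymp n^{-a^*}$, $\lambda\asymp n^{-b^*}$, all collapse to the common rate $n^{-\beta(q+1)/(\beta(q+2)+\varrho(q+1))}$ by the same arithmetic as in the proof of Corollary \ref{cor:class_learning_rates}. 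Finally I would combine: the oracle inequality gives $6\times$ this rate, the training-validation penalty contributes $C(\tau+\log(m|\Lambda_n\times\Gamma_n|))/n$, and since $m\leq n$ and $|\Lambda_n|,|\Gamma_n|\leq\log n+1$ this penalty is $O(\tau n^{-1}\log\log n)$, which is dominated by the main rate; absorbing all constants and rescaling $\tau$ yields the claimed bound with probability $1-\e^{-\tau}$. The main obstacle, I expect, is bookkeeping the interaction between the $\e$-slack in the grid approximation and the hard admissibility window $[n^{-1/\varrho},m^{-1/\varrho}]$ for $\gamma$ — getting the threshold $n>\exp\frac{2\varrho+\log2}{1-\sigma}$ exactly right — together with making sure the union bound over the $m|\Lambda_n\times\Gamma_n|$ candidates (rather than the full product grid) is genuinely valid, which relies on the per-cell decomposition of $\fftsvm$ established in Section \ref{sec:loc_kernels}.
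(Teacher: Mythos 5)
Your proposal follows essentially the same route as the paper's proof: split into training set $D_1$ and validation set $D_2$, invoke \cite[Theorem 7.2]{SteinwartChristmannSVMs} for the ERM-over-candidates step, choose a grid point $(\blambda^*,\bgamma^*)$ with identical components $\lambda,\gamma$ approximating the deterministic optima of Corollary \ref{cor:class_learning_rates}, and feed that into Theorem \ref{thm:class_oracle_inequality}. The choice $b=\sigma+d$ (which the paper uses directly, and which you reach as a fallback) and the $2/\log n$-slack in $a_*$ are exactly the paper's devices. One concrete error you should fix: the number of candidate decision functions for the union bound is $|\Lambda_n\times\Gamma_n|^m$, \emph{not} $m|\Lambda_n\times\Gamma_n|$. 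The per-cell decomposition reduces the number of \emph{training runs} to $m|\Lambda_n\times\Gamma_n|$, but the validation-ERM oracle inequality still ranges over all composite estimators $\wideparen{f}_{D_1,\blambda,\bgamma}$ with $(\blambda,\bgamma)\in\Lambda_n^m\times\Gamma_n^m$, so the penalty is $\log(1+|\Lambda_n\times\Gamma_n|^m)\asymp m\log\log n$, yielding a term of order $n^{\sigma-1}\log\log n$ rather than your $n^{-1}\log\log n$. This does not damage the conclusion, since $1-\sigma>\beta(q+1)/(\beta(q+2)+\varrho(q+1))$ follows from $\sigma<\varrho(q+1)/(\beta(q+2)+\varrho(q+1))$, so $n^{\sigma-1}$ is still dominated by the main rate; but the $m$-dependence of the union-bound penalty is precisely what forces the trade-off between $\sigma$ (computational speed-up) and the achievable rate discussed in Remark \ref{rmk:speedup_accuracy_tradeoff}, so it must be tracked. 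Two minor points: the paper keeps the sharper $(q+1)/(q+2)$-exponent variance-informed penalty from \cite[Theorem 7.2]{SteinwartChristmannSVMs} rather than the crude $O(1/n)$ bound (both are dominated here), and the admissibility window for $\gamma$ is $[l^{-1/\varrho},m^{-1/\varrho}]$ with $l=\lfloor n/2\rfloor+1$ rather than $[n^{-1/\varrho},m^{-1/\varrho}]$, which is what produces the extra $\log 2$ in the threshold $n>\exp\frac{2\varrho+\log 2}{1-\sigma}$ that you flagged as the main obstacle.
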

Note that in Corollary \ref{cor:class_learning_rates} and Theorem \ref{thm:class_adaptive_rates} we are confronted with the same trade-off between computational speed up and generalization performance mentioned in Remark \ref{rmk:speedup_accuracy_tradeoff}. The same effects were observed in the results of \cite{BlaschzykSteinwart_ImprovedClassificationRatesForLocalizedSVMs,ThomannBlaschzykMeisterSteinwart_SpatialDecomposition}.

\section{Experimental Results}\label{sec:experimental_results}

In this section we complement our theoretical findings by experimentally verifying that, given some dataset $D$, learning method (\ref{eqn:def_LSVM}) achieves the same generalization performance if this dataset is non-trivially embedded in a much higher dimensional space. To this end, we define and embedding $\Phi:\R^d\to \R^{d+p}$ as follows: Sample $w_1,\ldots,w_p$ iid from the uniform distribution on $[-\pi,\pi]^d$ and define the function $\varphi:\R^d\to\R^p$ by $\varphi_j(x)=\sin \langle x,w_j\rangle$ for $j=1,\ldots,p$. Subsequently, sample an orthogonal matrix $T\in\R^{(d+p)\times(d+p)}$ from the Haar-measure and set $\Phi(x):=T(x,\varphi(x))$. Now, given a dataset $D=((x_1,y_1),\ldots,(x_n,y_n))$ with $x_i\in[-1,1]^d$ for $i=1,\ldots,n$ we define the embedded dataset $D_p=(\Phi(x_i),y_i)_{i=1,\ldots,n}$. The resulting dataset lies on the rotated graph of the map $\varphi:\R^d\to\R^p$ and therefore naturally has a non-trivial $d$-dimensional structure in a $(d+p)$-dimensional Euclidean space, see Figure \ref{fig:embedding}.


\begin{figure}[h!]
\includegraphics[width=.24\linewidth]{./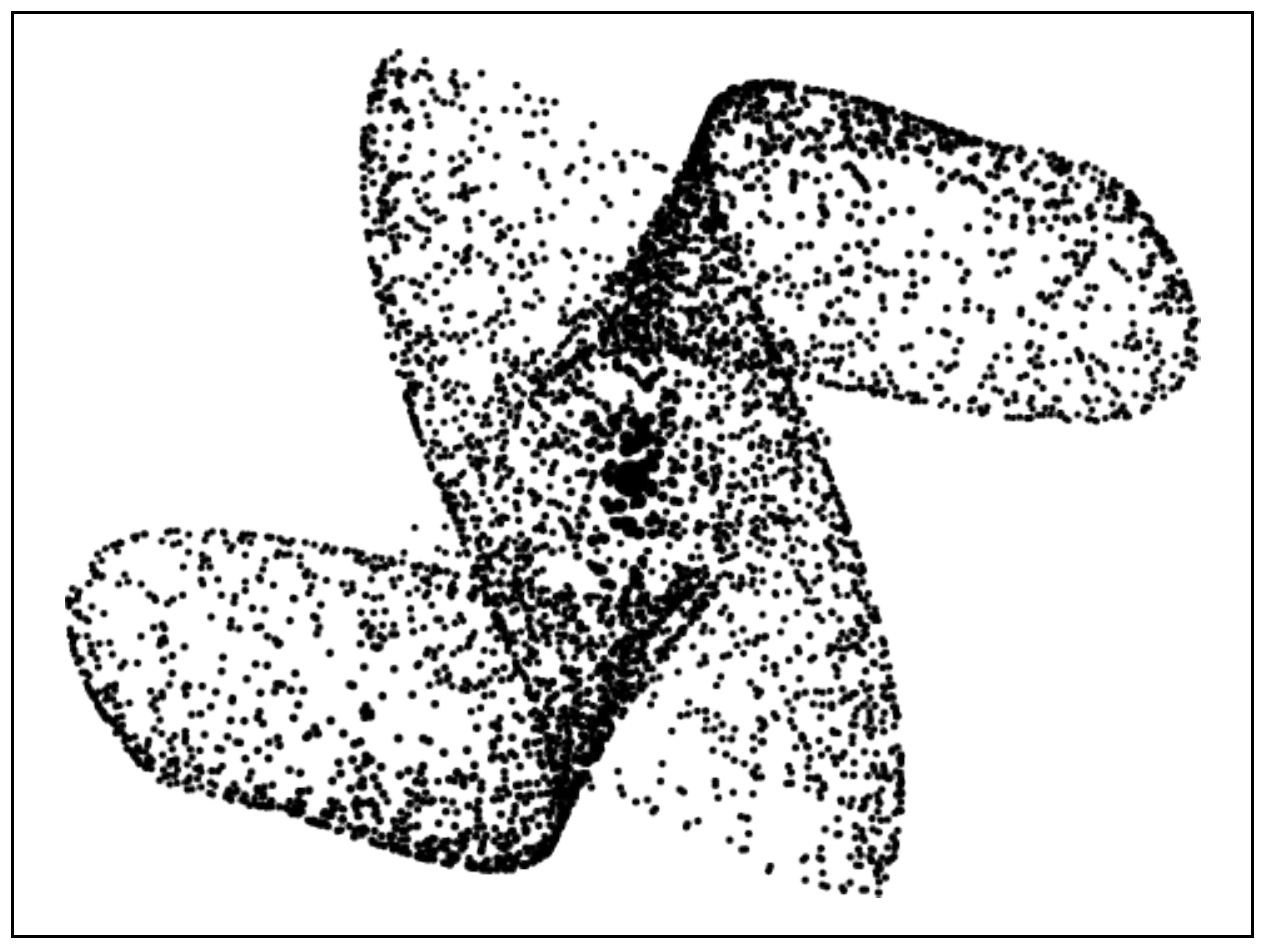}\hfill
\includegraphics[width=.24\linewidth]{./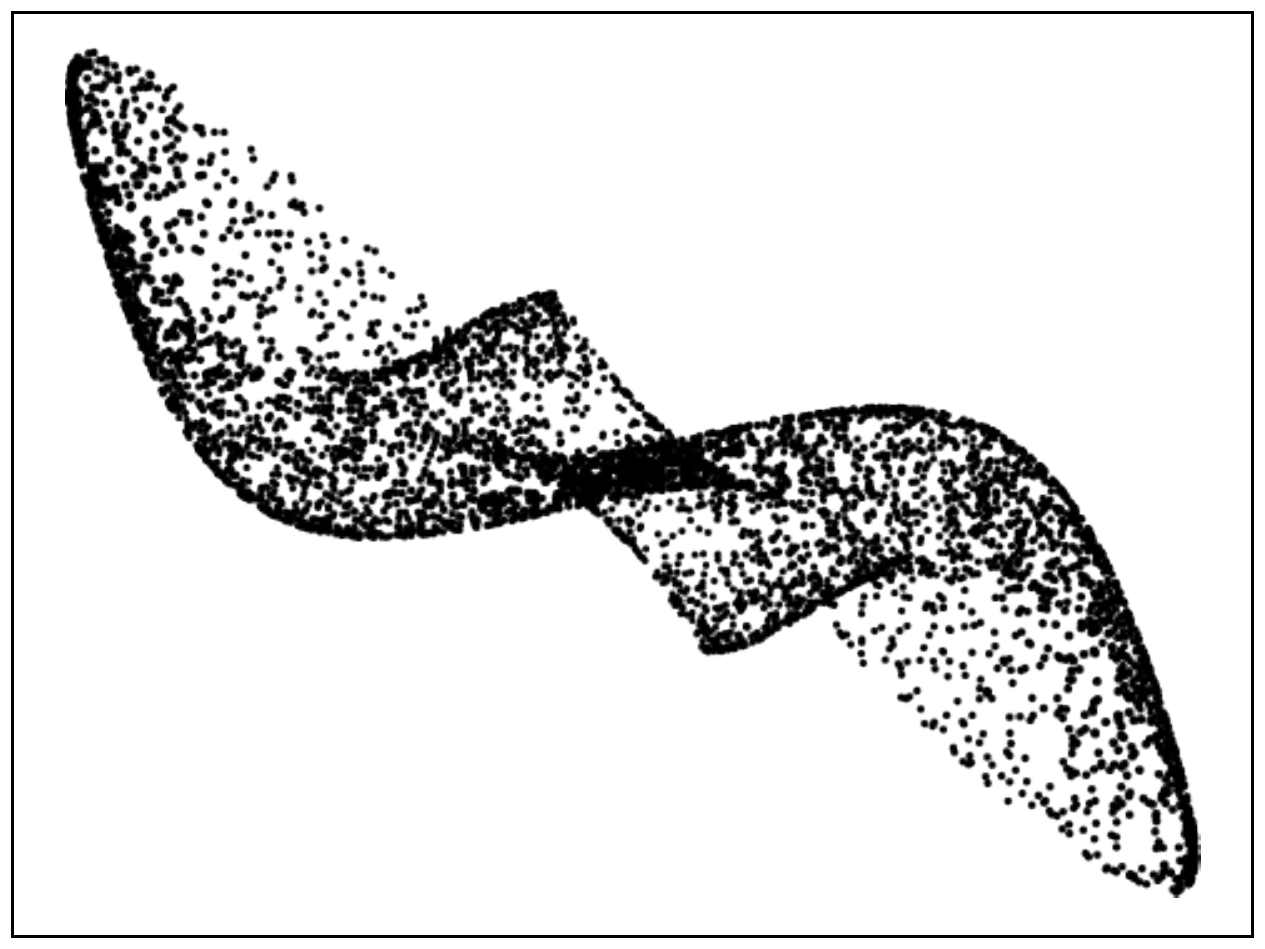}\hfill
\includegraphics[width=.24\linewidth]{./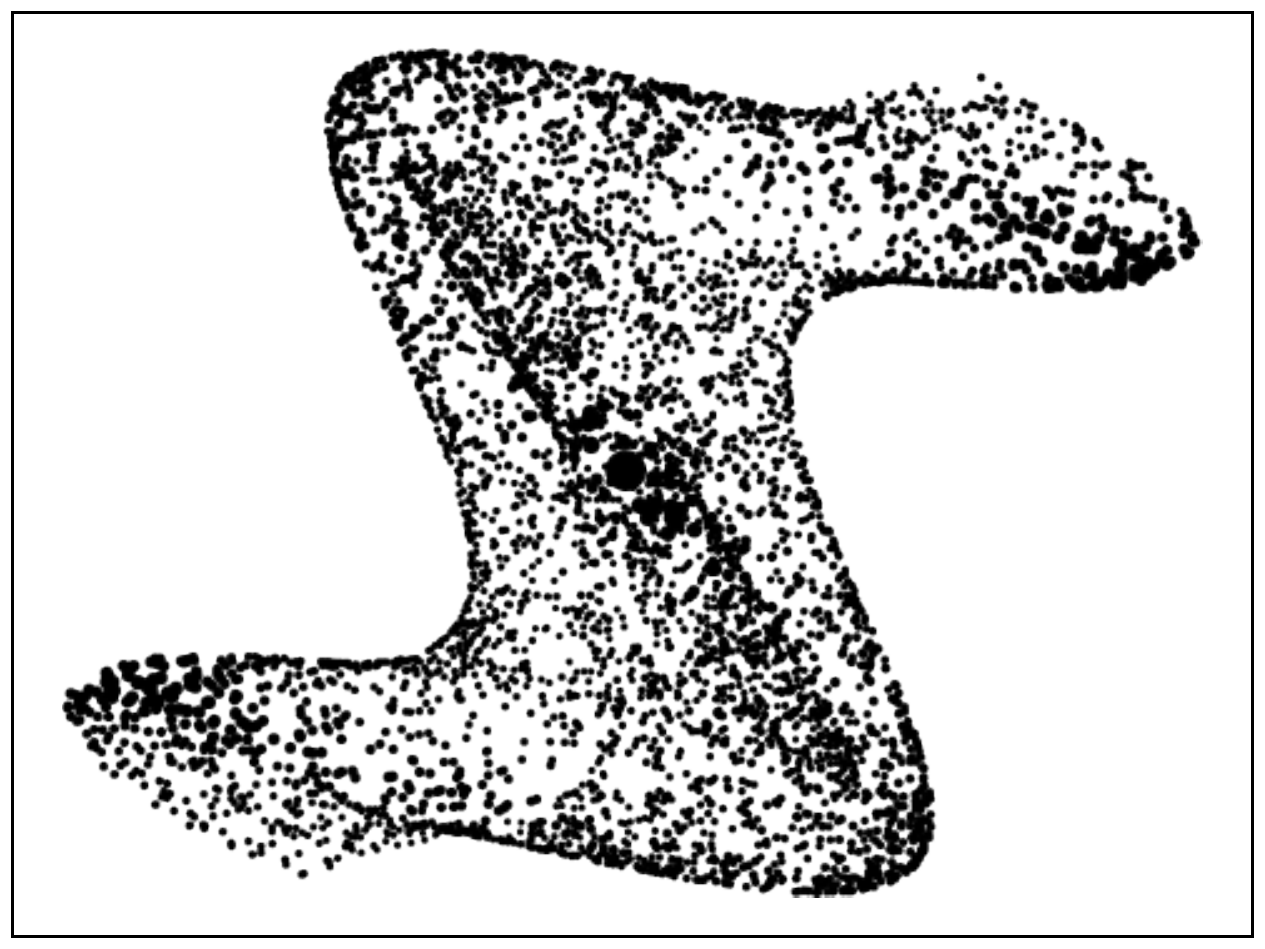}\hfill
\includegraphics[width=.24\linewidth]{./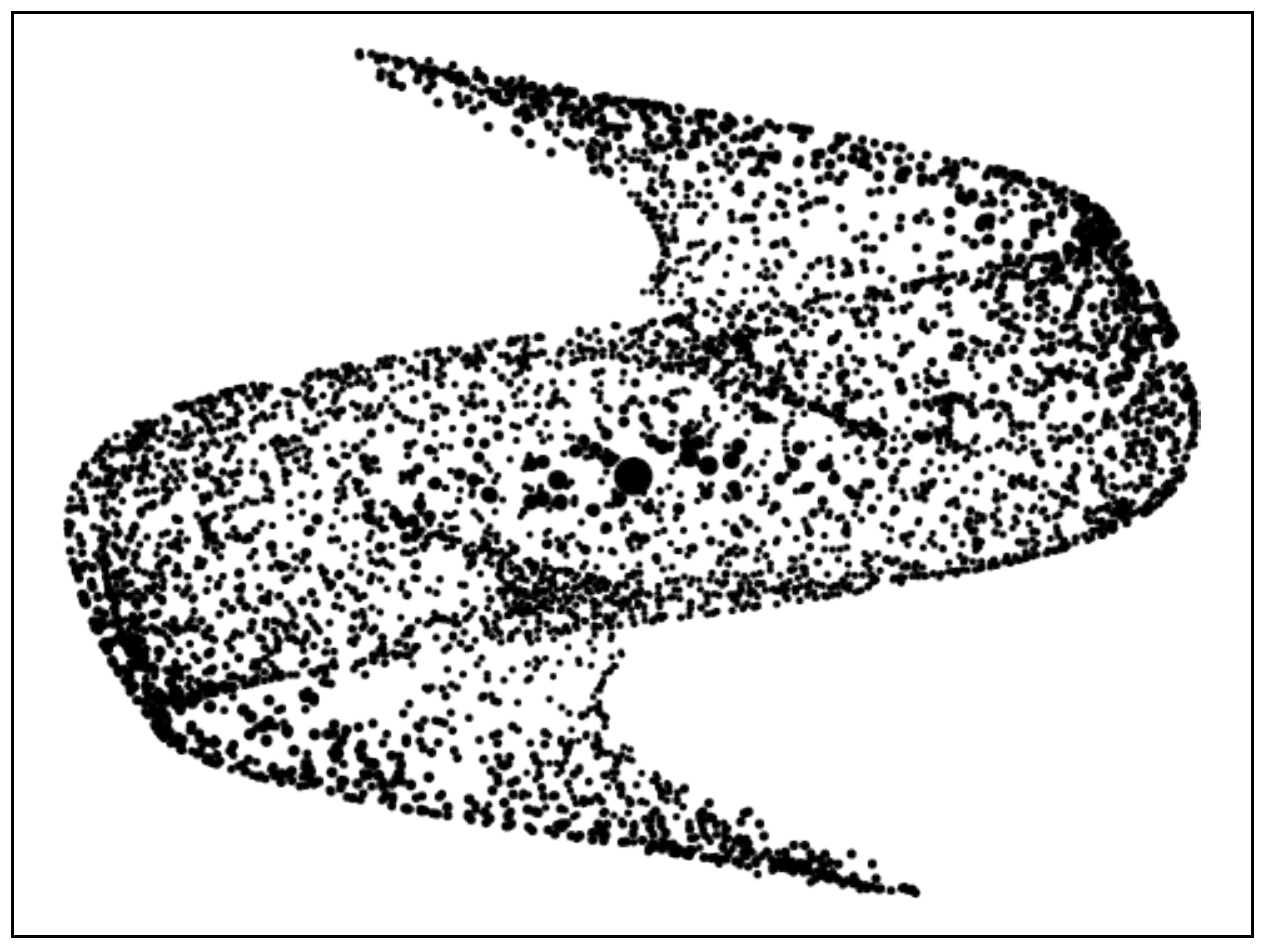}\hfill
\\[\smallskipamount]
\includegraphics[width=.24\linewidth]{./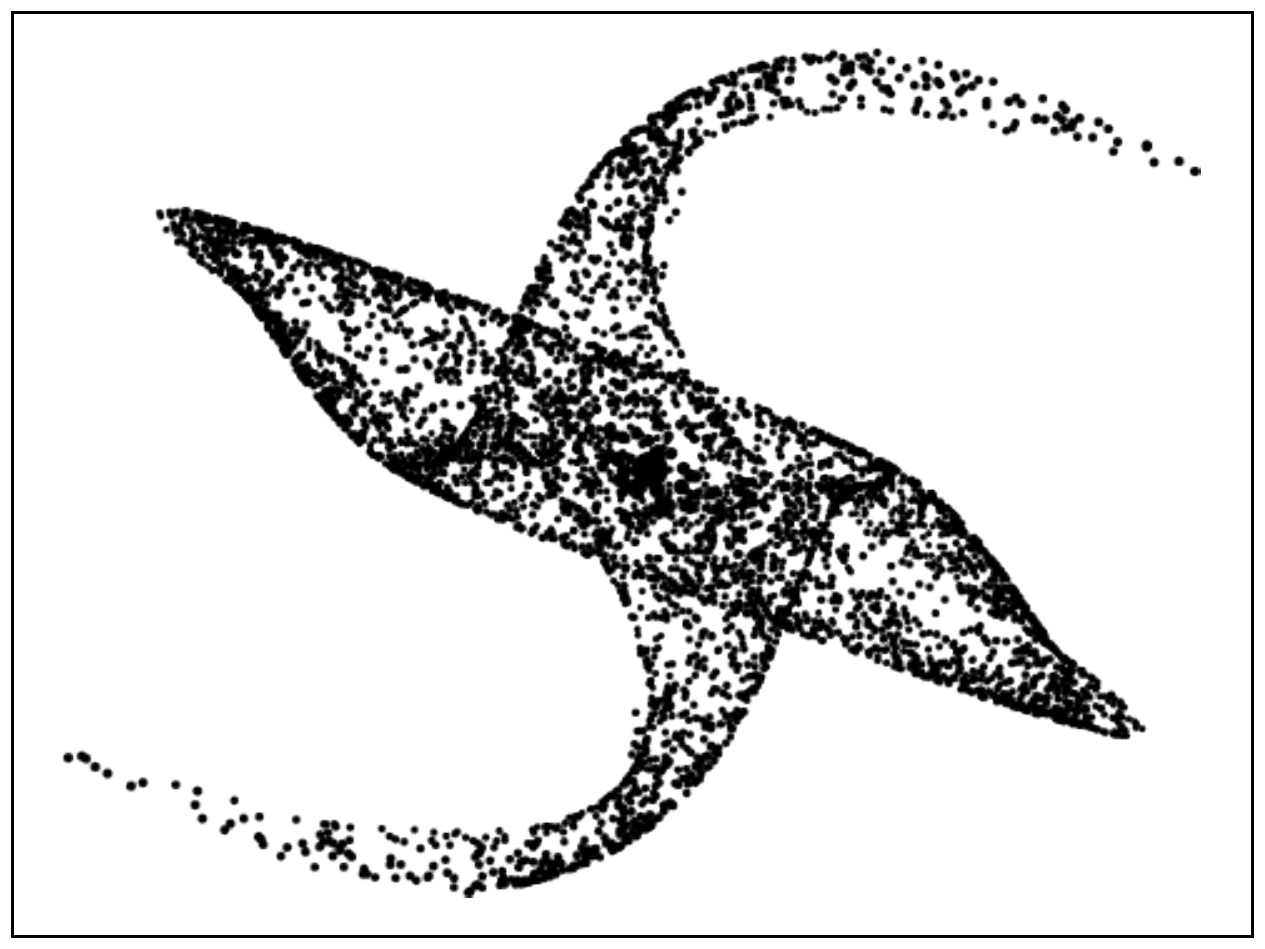}\hfill
\includegraphics[width=.24\linewidth]{./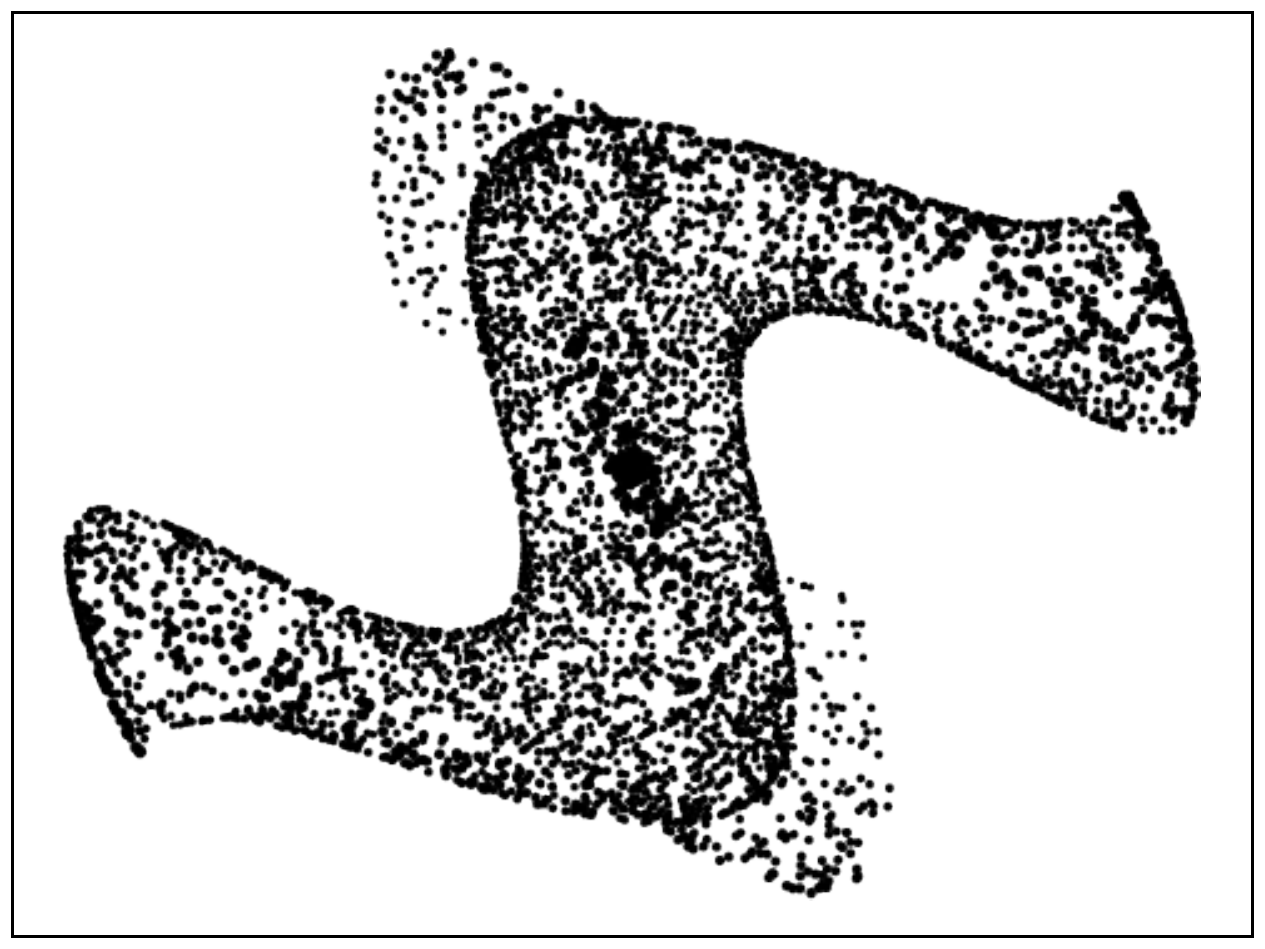}\hfill
\includegraphics[width=.24\linewidth]{./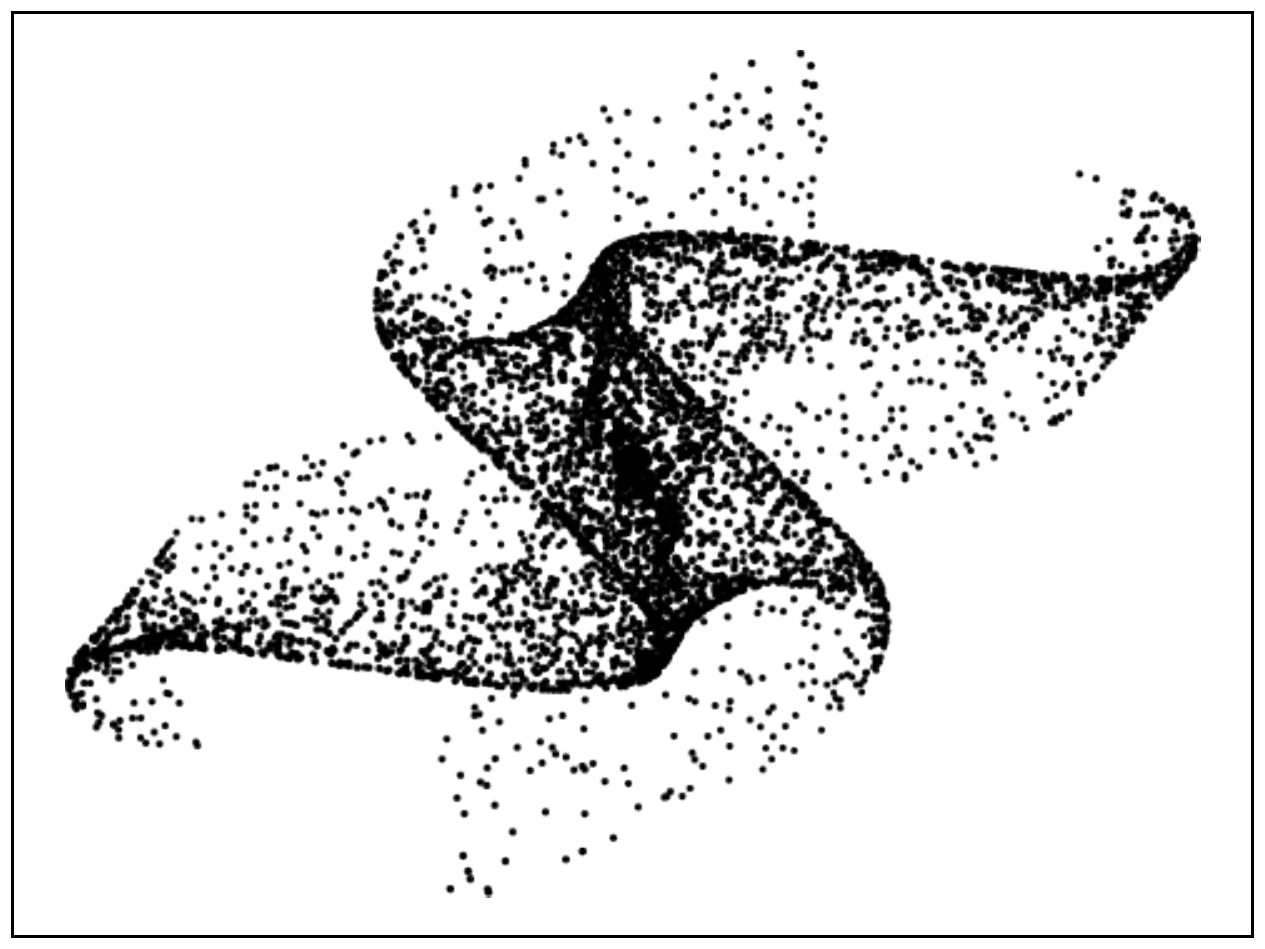}\hfill
\includegraphics[width=.24\linewidth]{./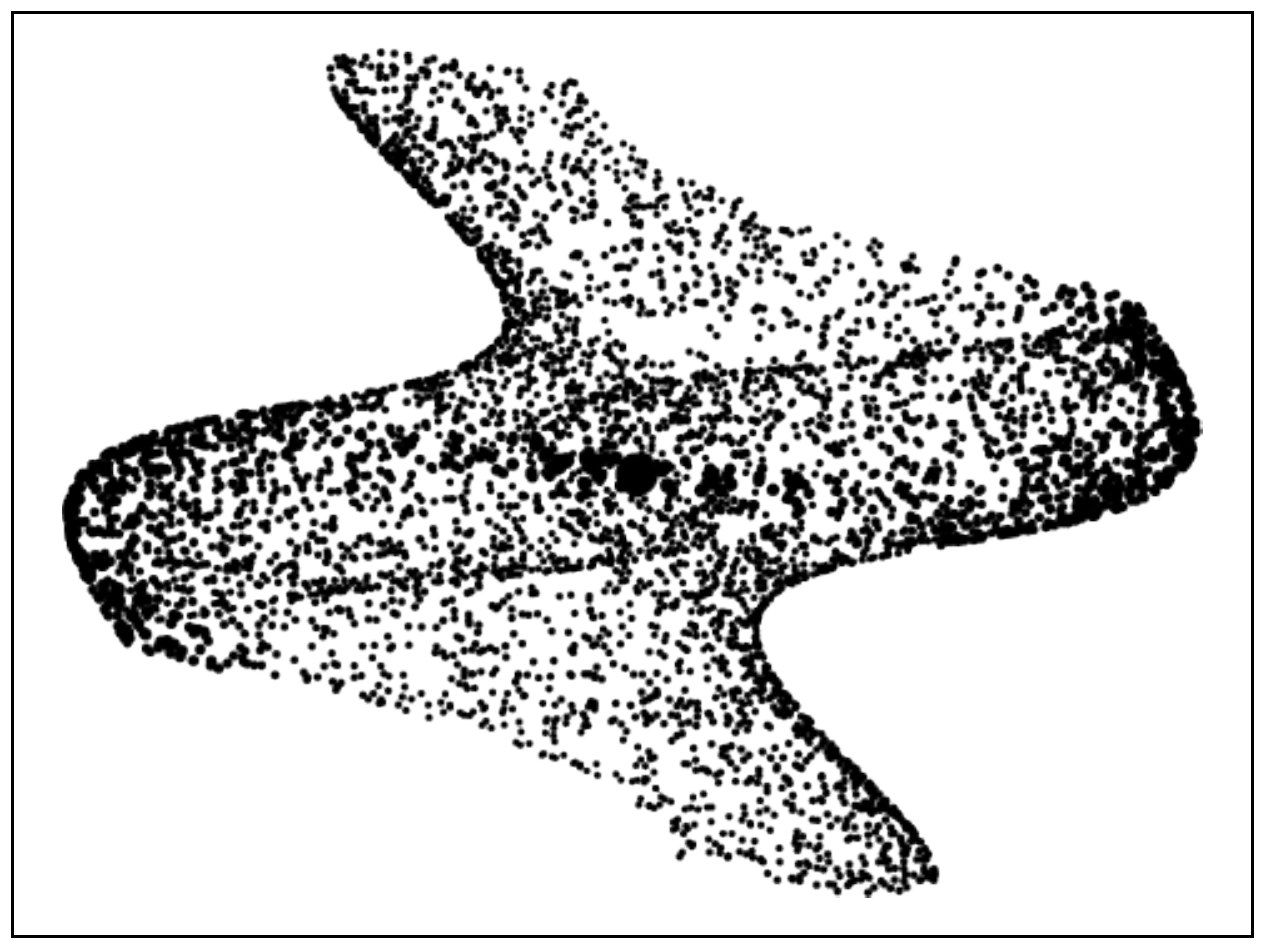}

\caption{\label{fig:embedding}5000 points sampled uniformly on $[-1,1]^2$ and mapped into $\R^6$ with the procedure described. The figures show the orthogonal projection of the mapped datapoints onto randomly sampled two-dimensional planes.}
\end{figure}

We want to investigate experimentally how the generalization performance behaves as a function of artificially added dimensions $p$. Similar experiments have been conducted, for example, in \cite{BickelLi_LocalPolynomialRegressuinOnUnknownManifolds} where the authors consider the synthetic three-dimensional dataset, where $x_1$ is sampled from a standard normal distribution and $x_2=x_1^3+\sin(x_1)-1$ and $x_3=\log(x_1^2+1)-x_1$. The response variable is $y=\cos(x_1)+x_2-x_3^3+\varepsilon$, where $\varepsilon$ is sampled from a normal distribution. They compare the performance of a local polynomial regressor as an estimator based on the whole feature vector $(x_1,x_2,x_3)$ against an estimator having only access to the only true feature $x_1$. In \cite{YangDunson_BayesianManifoldRegression} the authors consider datapoints lying on the two-dimensional \emph{swiss roll} manifold in $\R^3$, which they map into $\R^{100}$ via a random $100\times 3$-matrix and modeled the response variables as a function of the features plus noise. They only state, that their estimator "has a relatively fast convergence rate even though the dimension of the ambient space is large", but do not compare it to the performance of their estimator using the dataset in the original three-dimensional space, or a dataset in which the feature vectors contain only the two necessary parameters to describe the manifold. In \cite{NakadaImaizumi_AdaptiveApproximation} the authors conduct similar experiments using deep neural neetworks with ReLU activation function and the least-squares loss. They sample points from a uniform distribution on a $d'$-dimensional sphere in a $d$-dimensional space and modeled the response using a predefined function plus noise and examine the performance of a neural network for varying values valus of $d'$ and $d$ and different sample sizes. The hypothesis of low intrinsic dimensionality is especially prevalent for image datasets and convolutional neural network being able to exploit these structures. Although these highly specific datasets are not readily comparable to our setting, \cite{PopeZhuAbdelkaderGoldblumGoldstein_IntrinsicDimensionOfImages} consider a conceptually similar experimental setup where they keep the intrinsic dimension a dataset fixed and investigate the generalization performance for varying ambient dimensions.

%

For our purposes, we collected 32 regression datasets and 32 binary classification datasets from the UCI Repository \cite{UCIrepository} summarized in Tables \ref{tbl:reg_datasets} and \ref{tbl:class_datasets} in Section \ref{sec:dataset_summaries}. For the respective 16 smallest datasets we used a global kernel (i.e.~no partition), for the remaining datasets we used a partition such that each cell contains at most 4000 samples. For each dataset we performed training runs with the embedding described above for $p=0,\ldots,50$, where 20\% of each dataset was left out for testing and each run was repeated 10 times. For training and testing we used the command line version of liquidSVM \cite{SteinwartThomann_liquidSVM}, which implements a partitioning method. For hyperparameter selection we used 5-fold cross validation over a default $10\times 10$-grid chosen by liquidSVM based on some characteristics of the dataset, which has been empirically verified to yield competitive performance.  The results are summarized in Figures \ref{fig:results-glob-reg}, \ref{fig:results-glob-class}, \ref{fig:results-loc-reg}, and \ref{fig:results-loc-class} for regression with global kernels, classification with global kernels, regression with local kernels, and classification with local kernels respectively. We can divide the results in roughly three categories:
\begin{enumerate}
\item In accordance with our theoretical findings, the generalization performance is independent of the number of artificially added dimensions. That is, the test error for the datasets $D_p, p=1,\ldots,50,$ is similar as for the original dataset $D$. The datasets in this category constitute a clear majority.
\item After an initial increase of the test error, the test error quickly levels out at a moderately higher test error, which is still well below the naive error, see Section \ref{sec:dataset_summaries}. We still see this as a partial verification of our theoretical findings since at least after a certain point, the test error is independent of the further artificially added dimensions. Examples of datasets in this category are \texttt{bike\_sharing\_casual}, \texttt{bike\_sharing\_total}, \texttt{gas\_sensor\_drift\_class}, \texttt{gas\_sensor\_drift\_conc},\\ \texttt{sml2010\_dining}, \texttt{sml2010\_room}, \texttt{thyroid\_ann}, and \texttt{travel\_review\_ratings}.
\item On a few rare exceptions, the test error grows significantly, as for the datasets \texttt{chess}, \texttt{crowd\_sourced\_mapping}, and \texttt{electrical\_grid\_stability\_simulated}.
\end{enumerate}

\begin{figure}[h!]
\centering
\includegraphics[width=\textwidth]{./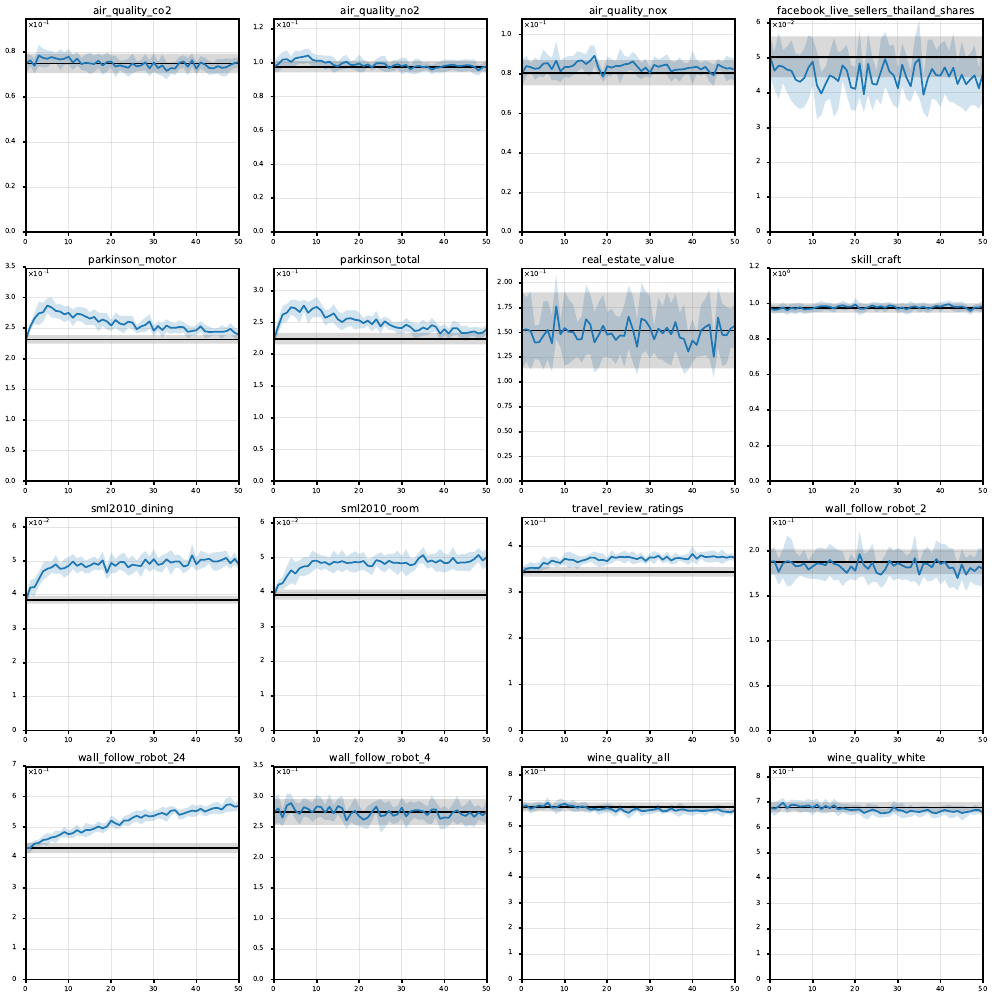}

\caption{\label{fig:results-glob-reg}Test root mean-squared errors ($y$-axis) for global kernels. The $x$-axis contains the number of artificially added dimensions. The shaded blue area corresponds to the standard deviation across the different runs. For comparison, the horizontal black line shows the test error for the original dataset. The shaded grey area corresponds to the standard deviation across the different runs for the original dataset.}
\end{figure}

\begin{figure}[h!]
\centering
\includegraphics[width=\textwidth]{./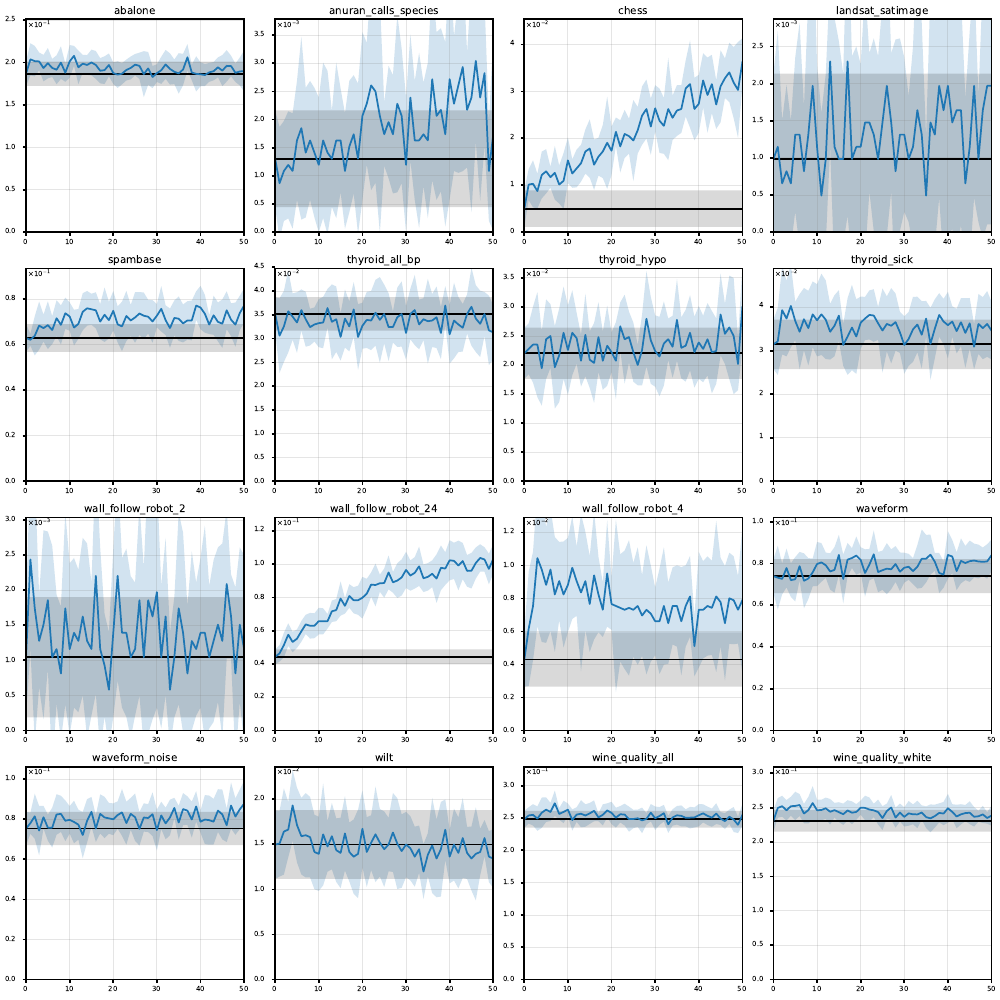}

\caption{\label{fig:results-glob-class}Test classification errors ($y$-axis) for global kernels. The $x$-axis contains the number of artificially added dimensions. The shaded blue area corresponds to the standard deviation across the different runs. For comparison, the horizontal black line shows the test error for the original dataset. The shaded grey area corresponds to the standard deviation across the different runs for the original dataset.}
\end{figure}

\begin{figure}[h!]
\centering
\includegraphics[width=\textwidth]{./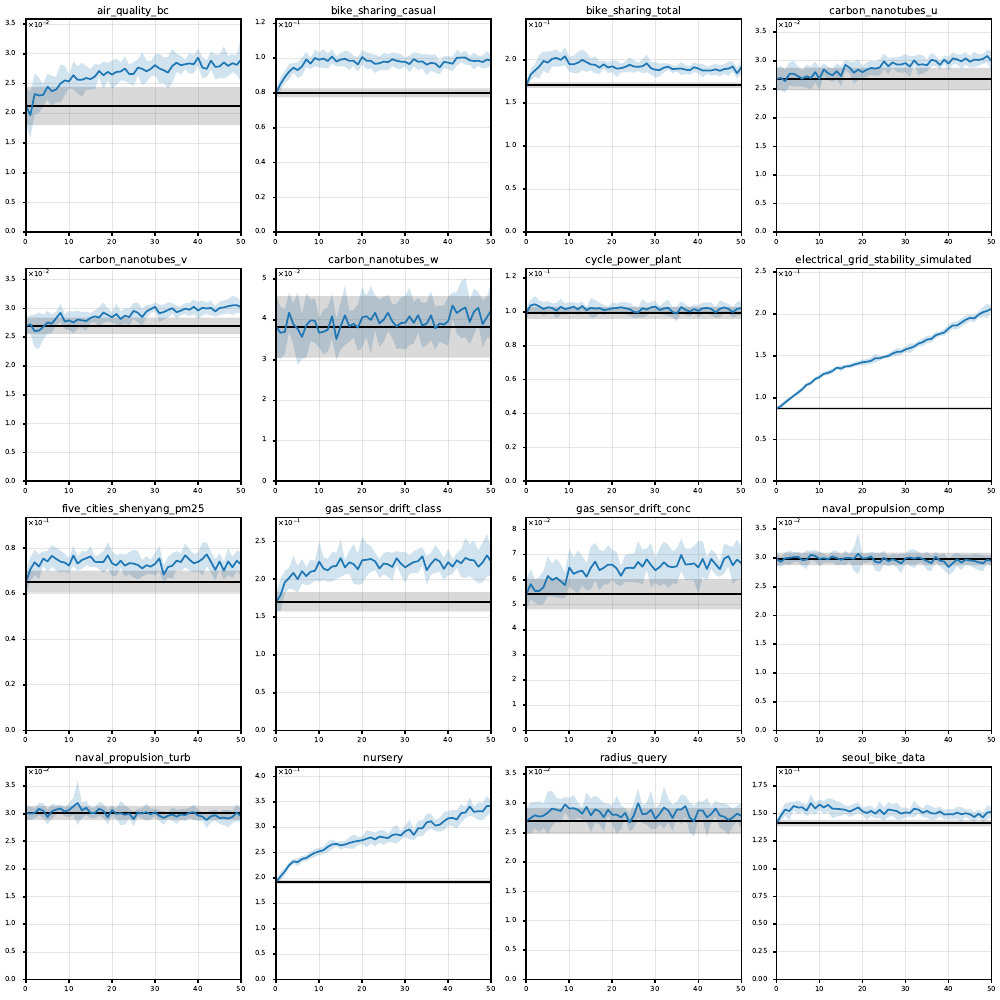}

\caption{\label{fig:results-loc-reg}Test root mean-squared errors ($y$-axis) for local kernels. The $x$-axis contains the number of artificially added dimensions. The shaded blue area corresponds to the standard deviation across the different runs. For comparison, the horizontal black line shows the test error for the original dataset. The shaded grey area corresponds to the standard deviation across the different runs for the original dataset.}
\end{figure}

\begin{figure}[h!]
\centering
\includegraphics[width=\textwidth]{./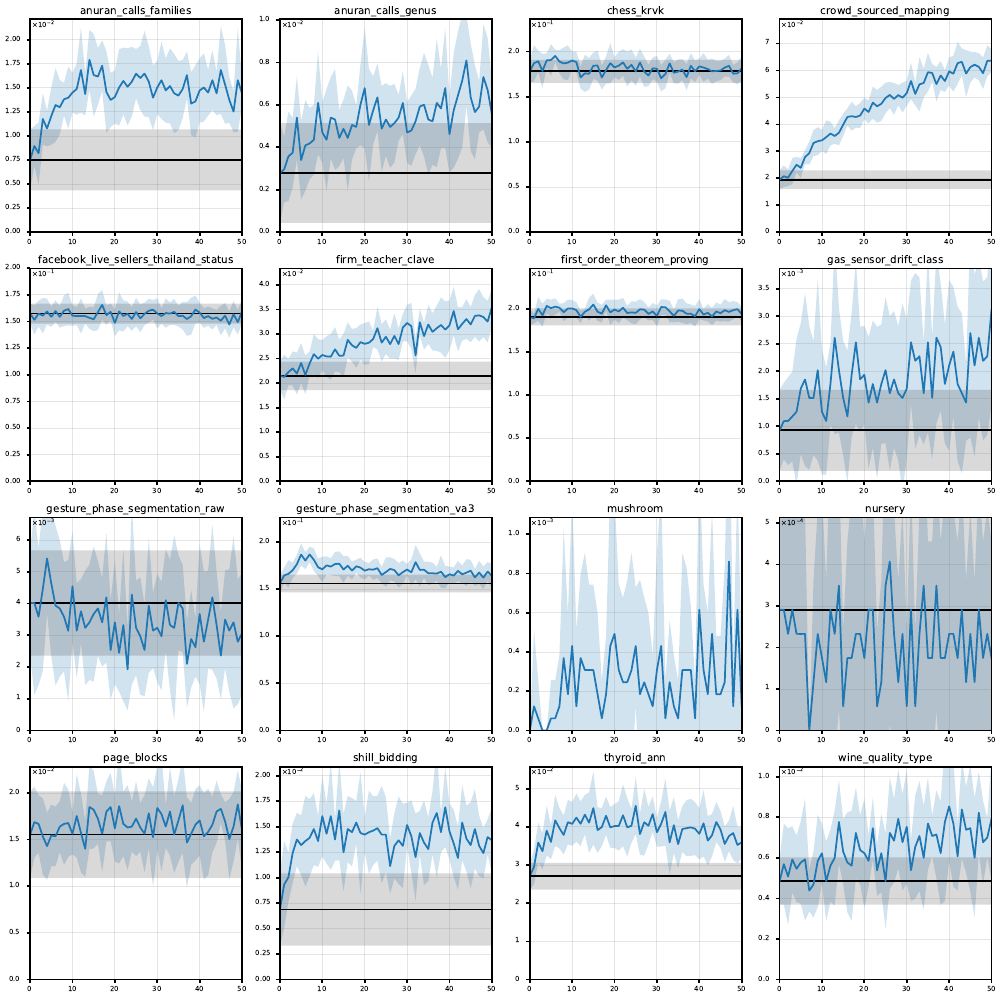}

\caption{\label{fig:results-loc-class}Test classification errors ($y$-axis) for local kernels. The $x$-axis contains the number of artificially added dimensions. The shaded blue area corresponds to the standard deviation across the different runs. For comparison, the horizontal black line shows the test error for the original dataset. The shaded grey area corresponds to the standard deviation across the different runs for the original dataset.}
\end{figure}

\section*{Acknowledgements}
The authors thank the International Max Planck Research School for Intelligent Systems (IMPRS-IS) for supporting Thomas Hamm. Thomas Hamm and Ingo Steinwart were supported by the German Research Foundation under DFG Grant STE 1074/4-1. 

\bibliography{bib}

\begin{appendices}
\section{RKHS Fundamentals}\label{sec:RKHS_fundamentals}
\begin{dfn}[Kernel]\label{dfn:kernel}
Let $X$ be a non-empty set. A function $k:X\times X\to\R$ is called a \emph{kernel} if there exists a (real) Hilbert space $H_0$ and a map $\Phi:X\to H_0$ such that $k(x,y)=\langle \Phi(x),\Phi(y)\rangle_{H_0}$ for all $x,y\in X$. The map $\Phi$ is called a \emph{feature map} and $H_0$ a \emph{feature} space of $k$.
\end{dfn}
By \cite[Theorem 4.16]{SteinwartChristmannSVMs}, a symmetric function $k:X\times X\rightarrow\R$ is a kernel if and only if it is positive definite, that is if for all $n\in\N$ and all choices $x_1,\ldots,x_n\in X$ and $\alpha_1,\ldots,\alpha_n\in\R$ we have
\begin{equation*}
\sum_{i=1}^n \sum_{j=1}^n \alpha_i \alpha_j k(x_j,x_i)\geq 0.
\end{equation*}
\begin{dfn}[Reproducing Kernel/Reproducing Property]\label{dfn:reproducing_kernel}
Let $X$ be a non-empty set and $H$ a Hilbert space consisting of function $f:X\to \R$. A function $k:X\times X\to\R$ is a \emph{reproducing kernel} if $k(x,\cdot)\in H$ for all $x\in X$ and
\begin{equation}\label{eqn:reproducing_property}
f(x)=\langle f, k(x,\cdot)\rangle \quad \text{for all } f\in H,x\in X.
\end{equation}
Property (\ref{eqn:reproducing_property}) is called the \emph{reproducing property}.
\end{dfn}
\begin{dfn}[RKHS]\label{dfn:RKHS}
Let $X$ be a non-empty set and $H$ a Hilbert space consisting of function $f:X\to \R$. Then $H$ is called a \emph{reproducing kernel Hilbert space} (RKHS) if the evaluation functional $H\to\R$ defined by $f\mapsto f(x)$ is continuous for every $x\in X$.
\end{dfn}
Definitions \ref{dfn:kernel}, \ref{dfn:reproducing_kernel}, and \ref{dfn:RKHS} are connected in the following way: Every reproducing kernel in the sense of Definition \ref{dfn:reproducing_kernel} is a kernel in the sense of Definition \ref{dfn:kernel} via the \emph{canonical feature map} $\Phi(x):=k(x,\cdot)$, see \cite[Lemma 4.19]{SteinwartChristmannSVMs}. Additionally, every RKHS has a unique reproducing kernel  \cite[Theorem 4.20]{SteinwartChristmannSVMs}. Conversely, every kernel $k$ has a unique RKHS $H$, for which it is the reproducing kernel, consisting of the functions $x\mapsto \langle \Phi(x),w\rangle_{H_0}, w\in H_0$, where $\Phi:X\to H_0$ is a feature map of $k$ and the norm in $H$ is given by
\begin{equation}\label{eqn:canonical_RKHS}
\norm{f}_H=\inf\left\{\norm{w}_{H_0}: w\in H_0 \text{ with } f=\langle \Phi(\cdot ),w\rangle\right\},
\end{equation}
see \cite[Theorem 4.21]{SteinwartChristmannSVMs}.

The following two results were already used in \cite{HammSteinwart_IntrinsicDimension}. As these results are crucial for the construction of localized kernels and their proofs are simple and instructive, we will repeat them at this point.
\begin{lem}\label{lem:transformed_kernel}
Let $k$ be a kernel on $X$ with RKHS $H$ and let $\psi:Y\rightarrow X$ be a map. Then $k_\psi(\cdot,\cdot):=k(\psi(\cdot),\psi(\cdot))$ is a kernel on $Y$ with RKHS $H_\psi=\{f\circ\psi:f\in H\}$ and the map $V:H\rightarrow H_\psi$ defined by $ f\mapsto f\circ\psi$ is a metric surjection. The norm in $H_\psi$ can be computed by
\begin{align*}
\norm{g}_{H_\psi}=\inf\{\norm{f}_H: f \text{ with } g=f\circ \psi \}.
\end{align*}
If $\psi$ is bijective, then $V$ is an isometric isomorphism.
\end{lem}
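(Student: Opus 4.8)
The plan is to verify directly that $k_\psi$ is a kernel, identify a feature map, and then use the abstract characterization of the RKHS of a kernel via its feature map (equation (\ref{eqn:canonical_RKHS})).

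First I would observe that if $\Phi:X\to H_0$ is a feature map of $k$, i.e.~$k(x,x')=\langle\Phi(x),\Phi(x')\rangle_{H_0}$, then $\Phi\circ\psi:Y\to H_0$ satisfies $k_\psi(y,y')=k(\psi(y),\psi(y'))=\langle\Phi(\psi(y)),\Phi(\psi(y'))\rangle_{H_0}$, so by Definition \ref{dfn:kernel}, $k_\psi$ is a kernel with feature map $\Phi\circ\psi$ and feature space $H_0$. Taking $\Phi$ to be the canonical feature map of $k$, so that $H_0=H$ and $\Phi(x)=k(x,\cdot)$, the abstract description of the RKHS (\cite[Theorem 4.21]{SteinwartChristmannSVMs}, recalled in (\ref{eqn:canonical_RKHS})) says that $H_\psi$, the RKHS of $k_\psi$, consists exactly of the functions $y\mapsto\langle\Phi(\psi(y)),w\rangle_H = \langle w,k(\psi(y),\cdot)\rangle_H = w(\psi(y))$ for $w\in H$. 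In other words $H_\psi=\{w\circ\psi:w\in H\}$, which is the claimed description, and the norm is $\norm{g}_{H_\psi}=\inf\{\norm{w}_H : w\in H,\ g=w\circ\psi\}$, again directly from (\ref{eqn:canonical_RKHS}). This also shows $V:f\mapsto f\circ\psi$ maps $H$ onto $H_\psi$, and the infimum formula for the norm is precisely the statement that $V$ is a metric surjection (the closed unit ball of $H$ maps onto the closed unit ball of $H_\psi$; one should note the infimum is attained by taking $w$ to be the orthogonal projection of any preimage onto $(\ker V)^\perp$, so the image of the closed ball is closed).

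For the last sentence, if $\psi$ is bijective then $g=w\circ\psi$ forces $w=g\circ\psi^{-1}$, so the preimage under $V$ is unique; hence $\ker V=\{0\}$, $V$ is a bijection, and the infimum in the norm formula is over a single element, giving $\norm{Vf}_{H_\psi}=\norm{f}_H$. Thus $V$ is an isometric isomorphism.

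I do not expect a serious obstacle here; the only point requiring a little care is to make sure the abstract RKHS construction of \cite[Theorem 4.21]{SteinwartChristmannSVMs} is being applied with the right feature map (the canonical one for $k$, composed with $\psi$), so that "functions of the form $\langle\Phi(\cdot),w\rangle$" literally unwinds to "$w\circ\psi$". Everything else — that $k_\psi$ is symmetric and positive definite, that the infimum defining the quotient norm is attained — is routine and follows from the corresponding facts for $k$ and $H$.
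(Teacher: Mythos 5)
Your proof is correct and follows essentially the same route as the paper: take the canonical feature map $\Phi(x)=k(x,\cdot)$ of $k$, note $\Phi\circ\psi$ is a feature map of $k_\psi$ with feature space $H$, and read off $H_\psi$, its norm, and the metric-surjection property from (\ref{eqn:canonical_RKHS}). The only small divergence is the final step: the paper deduces the isometric-isomorphism claim for bijective $\psi$ by applying the lemma again to $\psi^{-1}$, while you argue directly that bijectivity forces the $V$-preimage to be unique so the infimum is over a singleton — both are fine and equivalent in substance.
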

\begin{proof}
Let $\Phi:X\rightarrow H, x\mapsto k(x,\cdot)$ be the canonical feature map of $k$ and define $\Phi_\psi:Y\rightarrow H, y\mapsto \Phi(\psi(y))$. Then by construction we have $\langle \Phi_\psi(y),\Phi_\psi(y') \rangle_H=k_\psi(y,y')$ for all $y,y'\in Y$, that is, $\Phi_\psi$ is a feature map of $k_\psi$. The first two assertions now follow from (\ref{eqn:canonical_RKHS}). For the third assertion additionally apply this result on $\psi^{-1}$.
\end{proof}
\begin{cor}\label{cor:restricted_kernel}
Let $k$ be a kernel on $X\subset\R^d$, $H$ its RKHS and $Y\subset X$. Then $H|_Y:=\{f|_Y:f\in H\}$ is the RKHS of $k|_{Y\times Y}$ and the restriction $H\rightarrow H|_Y$ is a metric surjection.
\end{cor}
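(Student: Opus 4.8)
The plan is to deduce this directly from Lemma \ref{lem:transformed_kernel} by taking the map $\psi$ there to be the canonical inclusion. Concretely, with $Y\subset X$ as in the statement, let $\iota\colon Y\to X$, $\iota(y)=y$, be the inclusion map; this is well defined precisely because $Y\subset X$, so Lemma \ref{lem:transformed_kernel} applies with $\psi=\iota$.

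The remaining work is just to match notation. First I would identify the transformed kernel: by definition $k_\iota(y,y')=k(\iota(y),\iota(y'))=k(y,y')$ for all $y,y'\in Y$, so $k_\iota=k|_{Y\times Y}$. Second I would identify the transformed RKHS: since precomposition with the inclusion is exactly restriction, $H_\iota=\{f\circ\iota:f\in H\}=\{f|_Y:f\in H\}=H|_Y$. Lemma \ref{lem:transformed_kernel} then yields at once that $H|_Y$ is a Hilbert space of functions on $Y$ which is the RKHS of $k|_{Y\times Y}$, that the map $V\colon H\to H|_Y$, $f\mapsto f|_Y$, is a metric surjection, and that $\norm{g}_{H|_Y}=\inf\{\norm{f}_H:f\in H,\ g=f|_Y\}$, though I would probably not restate this last formula in the corollary itself.

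There is essentially no obstacle: the corollary is simply the specialization of Lemma \ref{lem:transformed_kernel} to the inclusion map, and the only thing to verify is the harmless bookkeeping that composition with $\iota$ coincides with restriction to $Y$. One point worth noting for the reader is that $V$ is only a metric surjection and not in general an isometric isomorphism, since the inclusion $\iota$ is typically not surjective onto $X$ — this is why the stronger conclusion of the lemma does not carry over.
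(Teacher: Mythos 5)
Your proposal is correct and matches the paper's proof exactly: the paper likewise applies Lemma \ref{lem:transformed_kernel} with $\psi$ taken to be the inclusion $Y\hookrightarrow X$, leaving only the bookkeeping you describe. The closing remark on why isometric isomorphism fails in general is a nice addition but not needed.
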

\begin{proof}
This follows from Lemma \ref{lem:transformed_kernel} with $\psi:Y\rightarrow X$ being the inclusion.
\end{proof}
\section{Entropy and Covering numbers}
Given a set $A\subset \R^d$ then by definition of $\varepsilon_m(A)$ for every $\varepsilon>\varepsilon_m(A)$ there exists an $\varepsilon$-net $N\subset A$ of $A$ with $|N|=m$. A useful property is, that for compact $A$ this also holds for $\varepsilon=\varepsilon_m(A)$, which is the content of the following lemma.
\begin{lem}\label{lem:existence_eps_net}
Let $A\subset\R^d$ be compact. Then for every $m\in\N$ there exists an $\varepsilon_m(A)$-net $N\subset A$ of $A$ with $|N|=m$.
\end{lem}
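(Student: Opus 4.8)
The plan is to realize $\varepsilon_m(A)$ as the minimum of a continuous function on a compact set. We may assume that $A$ contains at least $m$ points, since otherwise there is no $m$-element subset of $A$ at all. On the product $A^m$, which is compact as a finite product of compact sets, consider the covering-radius functional
\[
\Phi(x_1,\ldots,x_m):=\max_{a\in A}\ \min_{1\le i\le m}\norm{a-x_i},
\]
which is finite and well defined because $a\mapsto\min_i\norm{a-x_i}$ is continuous on the compact set $A$. Using the elementary bound $\bigl|\min_i s_i-\min_i t_i\bigr|\le\max_i|s_i-t_i|$ one sees that $\Phi$ is $1$-Lipschitz for the metric $(x,y)\mapsto\max_i\norm{x_i-y_i}$ on $A^m$, hence continuous, so it attains its minimum at some point $(x_1^*,\ldots,x_m^*)\in A^m$.

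First I would check that $\min_{A^m}\Phi=\varepsilon_m(A)$. For one direction, let $N\subset A$ be any genuine $m$-element net and list its elements as a tuple in $A^m$; then the covering radius $r(N):=\max_{a\in A}\min_{x\in N}\norm{a-x}$ equals the value of $\Phi$ at that tuple, so $r(N)\ge\min_{A^m}\Phi$, and taking the infimum over all such $N$ gives $\varepsilon_m(A)\ge\min_{A^m}\Phi$. For the other direction, given any $(x_1,\ldots,x_m)\in A^m$ the set $\{x_1,\ldots,x_m\}$ has at most $m$ elements and can be enlarged to a set $N\subset A$ with exactly $m$ elements, which is the one place the assumption $|A|\ge m$ enters; since enlarging a net does not increase its covering radius, $\varepsilon_m(A)\le r(N)\le\Phi(x_1,\ldots,x_m)$, and taking the infimum over $A^m$ yields $\varepsilon_m(A)\le\min_{A^m}\Phi$.

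It remains to extract the desired net. Let $N_0:=\{x_1^*,\ldots,x_m^*\}$, so that $r(N_0)=\Phi(x_1^*,\ldots,x_m^*)=\varepsilon_m(A)$, but possibly $|N_0|<m$. Enlarging $N_0$ to some $N\subset A$ with $|N|=m$ gives $r(N)\le r(N_0)=\varepsilon_m(A)$, while by the very definition of $\varepsilon_m(A)$ every $m$-element subset of $A$ has covering radius at least $\varepsilon_m(A)$; hence $r(N)=\varepsilon_m(A)$, i.e.\ $N$ is an $\varepsilon_m(A)$-net of $A$ with $|N|=m$. I do not anticipate any real obstacle here, as this is a routine compactness argument — one could equally well pass to a convergent subsequence in $A^m$ of a sequence of near-optimal $m$-nets. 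The only subtlety is the bookkeeping caused by the possibility that optimal or limiting configurations have coinciding points, which is handled harmlessly by the padding step described above.
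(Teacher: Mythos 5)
Your argument is correct, and it takes a mildly but genuinely different route from the paper's. The paper reaches the conclusion by sequential compactness: for each $n$ it picks an $m$-tuple forming an $(\varepsilon_m(A)+1/n)$-net, passes to accumulation points in $A$, and then verifies by a direct triangle-inequality estimate that these limits still cover at scale $\varepsilon_m(A)$. You instead set up a variational characterization, introducing the covering-radius functional $\Phi$ on the compact product $A^m$, proving it is Lipschitz, and invoking the extreme value theorem to obtain a minimizer; you then identify $\min_{A^m}\Phi$ with $\varepsilon_m(A)$. The two proofs are close in spirit — both ultimately rest on compactness of $A^m$ — but yours is a ``soft'' extreme-value argument while the paper's is a ``hard'' sequential estimate. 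What your version buys is explicit handling of two points the paper leaves implicit: (i) the tuple at which $\Phi$ is minimized (or, in the paper's version, the tuple of accumulation points) may have repeated coordinates, so $\{x_1^*,\ldots,x_m^*\}$ can have fewer than $m$ elements, and one must pad it to reach cardinality exactly $m$; and (ii) the statement silently presupposes $|A|\ge m$, since otherwise no $m$-element subset of $A$ exists and $\varepsilon_m(A)$ is an infimum over the empty set. Your write-up surfaces both of these, which is a genuine improvement in rigor; the paper's shorter proof glides past them. As you note yourself, the sequential variant is available with the same bookkeeping, so the choice is stylistic rather than structural.
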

\begin{proof}
For $n\in\N$ let $x_{1,n},\ldots,x_{m,n}\in A$ be a $(\varepsilon_m(A)+1/n)$-net of $A$. By compactness of $A$, each sequence $(x_{j,n})_{n\in\N}$ has an accumulation point $x_j\in A, j=1,\ldots,m$. These accumulation points are an $\varepsilon_m(A)$-net, since for all $x\in A$ we have
\begin{align*}
\min_{j=1,\ldots,m} \norm{x-x_j} &\leq  \min_{j=1,\ldots,m} \norm{x-x_{j,n}} +\norm{x_{j,n}-x_j} \\
& \leq \min_{j=1,\ldots,m} \varepsilon_m(A)+\frac{1}{n} +\norm{x_{j,n}-x_j} \\
&= \varepsilon_m(A)+\frac{1}{n} + \min_{j=1,\ldots,m} \norm{x_{j,n}-x_j}.
\end{align*}
Taking the infimum over $n\in\N$ then yields the assertion.
\end{proof}
\begin{lem}\label{lem:entropy_subset}
For $A\subset B\subset \R^d$ we have $\varepsilon_m(A)\leq 2\varepsilon_m(B)$ for all $m\in\N$.
\end{lem}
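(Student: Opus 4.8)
The plan is to exploit the one genuinely nontrivial feature of the definition of $\varepsilon_m$: the approximating net must lie \emph{inside} the set it covers. An $\varepsilon$-net of $B$ is automatically an $\varepsilon$-net of the smaller set $A$, but it need not be a subset of $A$; repairing this by moving each relevant center into $A$ is exactly what costs the factor $2$.

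Fix $m\in\N$. Since $\varepsilon_m(B)$ is an infimum (which we do not know to be attained, as $B$ need not be compact), it suffices to prove $\varepsilon_m(A)\leq 2\varepsilon$ for every $\varepsilon>\varepsilon_m(B)$ and then let $\varepsilon\downarrow\varepsilon_m(B)$. We may assume $A$ contains at least $m$ points (otherwise the relevant sets in the paper are always infinite, so this is no restriction). So fix $\varepsilon>\varepsilon_m(B)$ and pick an $\varepsilon$-net $N\subset B$ of $B$ with $|N|=m$.

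Now I would pass to the centers that are actually relevant for $A$: set $N_0:=\{x\in N: B_\varepsilon(x)\cap A\neq\emptyset\}$ and, for each $x\in N_0$, choose a point $\phi(x)\in B_\varepsilon(x)\cap A$; put $N':=\{\phi(x):x\in N_0\}\subset A$. Given $a\in A$, since $A\subset B$ there is $x\in N$ with $\norm{a-x}\leq\varepsilon$, hence $a\in B_\varepsilon(x)\cap A$, so $x\in N_0$ and $\norm{a-\phi(x)}\leq\norm{a-x}+\norm{x-\phi(x)}\leq 2\varepsilon$. Thus $N'$ is a $2\varepsilon$-net of $A$ with $N'\subset A$ and $|N'|\leq|N_0|\leq m$. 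Padding $N'$ with arbitrary further points of $A$ to a set of cardinality exactly $m$ (which stays inside $A$ and remains a $2\varepsilon$-net) yields $\varepsilon_m(A)\leq 2\varepsilon$, and letting $\varepsilon\downarrow\varepsilon_m(B)$ finishes the proof.

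The argument is essentially obstacle-free; the only points requiring a little care are that $\varepsilon_m(B)$ may not be attained (handled by working with $\varepsilon>\varepsilon_m(B)$) and the cosmetic adjustment of the net $N'$ to exactly $m$ points.
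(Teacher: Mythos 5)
Your proof is correct and is essentially the paper's argument: start from an $\varepsilon$-net of $B$ of size $m$, replace each center that is within distance $\varepsilon$ of $A$ by a nearby point of $A$ (and the rest by arbitrary points of $A$), and use the triangle inequality to get a $2\varepsilon$-net of $A$ of size $m$ inside $A$. Your version is slightly more scrupulous about the infimum not being attained, but the idea is the same.
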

\begin{proof}
Let $x_1,\ldots,x_m\in B$ be an $\varepsilon$-net of $B$. For each $j=1,\ldots,m$ pick an $y_j\in A$ with $\norm{x_j-y_j}\leq \varepsilon$, if such an $y_j$ exists and else let $y_j\in A$ be an arbitrary point. Then, by the triangle inequality $y_1,\ldots,y_m$ is a $2\varepsilon$-net of $A$.
\end{proof}
\begin{lem}\label{lem:entropy_vs_diam}
Let $A\subset\R^d$ be compact. Then we have $\varepsilon_m(A)\leq\diam\,A$ for all $m\in\N$.
\end{lem}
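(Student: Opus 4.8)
The plan is to exhibit, for every $m\in\N$, a $(\diam A)$-net of $A$ that is contained in $A$ and has exactly $m$ elements; by the definition of the entropy number this immediately yields $\varepsilon_m(A)\leq\diam A$, and compactness of $A$ guarantees $\diam A<\infty$ so that the bound is meaningful.

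Concretely, I would fix an arbitrary point $x_0\in A$ (the case $A=\emptyset$ being vacuous). By the definition of the diameter, every $y\in A$ satisfies $\norm{x_0-y}\leq\diam A$, that is $A\subset B_{\diam A}(x_0)$, so the singleton $\{x_0\}$ is already a $(\diam A)$-net of $A$ lying inside $A$. To meet the cardinality constraint $|N|=m$ appearing in the definition of $\varepsilon_m(A)$, I would enlarge $\{x_0\}$ by $m-1$ further points of $A$; adjoining points of $A$ to a net of $A$ keeps it a $(\diam A)$-net contained in $A$. (When $A$ is infinite this is unproblematic; in the degenerate case that $A$ is finite with fewer than $m$ points one simply reads the net as a multiset, which does not affect the covering property.) This produces the desired net $N\subset A$ with $|N|=m$, hence $\diam A$ belongs to the set over which the infimum defining $\varepsilon_m(A)$ is taken (and if $\diam A=0$ one takes the infimum over all $\varepsilon>0$ for the same singleton net), so $\varepsilon_m(A)\leq\diam A$.

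There is essentially no obstacle here. The only points requiring a word of care are the bookkeeping around the size requirement $|N|=m$ built into the definition of $\varepsilon_m$, and the observation that $\diam A<\infty$ because a compact subset of $\R^d$ is bounded.
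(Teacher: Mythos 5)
Your argument is correct, and it takes a more direct route than the paper. You fix an arbitrary $x_0\in A$, observe that $\{x_0\}$ is already a $(\diam A)$-net (since $\|x_0-y\|\leq\diam A$ for all $y\in A$), pad it with $m-1$ further points of $A$ to meet the cardinality requirement $|N|=m$, and conclude directly from the definition of $\varepsilon_m$. Compactness is used only to ensure $\diam A<\infty$. The paper instead first reduces to $m=1$ by appealing to monotonicity of $m\mapsto\varepsilon_m(A)$, then invokes Lemma~\ref{lem:existence_eps_net} to produce an $x\in A$ with $A\subset B_{\varepsilon_1(A)}(x)$ (so the infimum defining $\varepsilon_1(A)$ is attained), and finally bounds $\varepsilon_1(A)=\sup_{y\in A}\|x-y\|\leq\diam A$. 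Your version is more elementary: it does not need the attainment result of Lemma~\ref{lem:existence_eps_net} and does not separate out the monotonicity step, since the padding you perform is exactly what underlies that monotonicity anyway. One small point worth acknowledging, which your parenthetical about multisets gestures at: both your proof and the paper's require that $A$ have at least $m$ distinct points for a subset $N\subset A$ with $|N|=m$ to exist at all; this degenerate case is harmless for the paper's applications (where $A=S=\supp\mu$ is infinite), but strictly speaking it is not covered by the definition of $\varepsilon_m$ as stated, and the paper's own proof has the same blind spot.
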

\begin{proof}
By monotonicity of $\varepsilon_m(A)$ it suffices to prove the statement for $m=1$. Let $x\in A$ with $A\subset B_\varepsilon(x)$ for $\varepsilon=\varepsilon_1(A)$, cf.~Lemma \ref{lem:existence_eps_net}. Then we have
\begin{equation*}
\varepsilon= \sup_{y\in A}\norm{x-y}\leq \sup_{y,z\in A} \norm{z-y}=\diam\, A.
\end{equation*}
\end{proof}
\begin{dfn}
Given normed spaces $E,F$ and a bounded, linear operator $T:E\to F$ the $i$-th dyadic entropy number of $T$ is defined as
\begin{align*}
e_i(T):=\inf \left\{ \varepsilon>0:\exists x_1,\ldots,x_{2^{i-1}}\in F \text{ such that } TB_E\subset\bigcup_{j=1}^{2^{i-1}}(x_j+\varepsilon B_F)\right\}.
\end{align*}
Further, the covering numbers of the operator $T$ are defined by $\cN(T,\varepsilon):=\cN(TB_E,\varepsilon)$ for $\varepsilon>0$.
\end{dfn}
\begin{lem}\label{lem:equivalence_entropy_covering_numbers}
Let $E,F$ be normed spaces and let $T:E\to F$ be a bounded, linear operator.
\begin{enumerate}
\item If there exist constants $a>0$ and $q>0$ such that $e_i(T)\leq a \,i^{-1/q}$ for all $i\in\N$, then we have
\begin{equation*}
\log\mathcal{N}(T,\varepsilon)\leq \log 4 \left( \frac{a}{\varepsilon}\right)^q \quad \text{for all } \varepsilon>0.
\end{equation*}
\item If there exist constants $a>0$ and $q>0$ such that $\log\mathcal{N}(T,\varepsilon)\leq (a/\varepsilon)^q$ for all $\varepsilon>0$, then we have
\begin{equation*}
e_i(T)\leq 3^\frac{1}{q} a \,i^{-\frac{1}{q}} \quad \text{for all }i\in\N.
\end{equation*}
\end{enumerate}
\end{lem}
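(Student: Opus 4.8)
The plan is to route both implications through the elementary observation that $e_i(T)\le\varepsilon$ holds as soon as $TB_E$ can be covered by $2^{i-1}$ closed balls of radius $\varepsilon$ in $F$, i.e.\ as soon as $\mathcal{N}(T,\varepsilon)\le 2^{i-1}$; conversely, if $e_i(T)<\varepsilon$ then by definition of the infimum there is a covering of $TB_E$ by $2^{i-1}$ closed balls of some radius strictly smaller than $\varepsilon$, hence also of radius $\varepsilon$, so $\mathcal{N}(T,\varepsilon)\le 2^{i-1}$. Both parts then reduce to an optimal choice of the index $i$ (resp.\ the radius $\varepsilon$) together with some bookkeeping of constants, and notably only this ``easy'' half of the dictionary is needed, so no compactness or attainment-of-infimum issues arise.

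For part (i) I would fix $\varepsilon>0$ and set $i:=\lfloor (a/\varepsilon)^q\rfloor+1\in\N$. Then $i>(a/\varepsilon)^q$, so the hypothesis gives $e_i(T)\le a\,i^{-1/q}<\varepsilon$, whence $\mathcal{N}(T,\varepsilon)\le 2^{i-1}$ by the remark above. Taking logarithms yields $\log\mathcal{N}(T,\varepsilon)\le(i-1)\log2=\lfloor (a/\varepsilon)^q\rfloor\log2\le (a/\varepsilon)^q\log2\le\log4\cdot(a/\varepsilon)^q$, which is the claimed bound (in fact with room to spare, since $\log 2$ already suffices).

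For part (ii) I would fix $i\in\N$ and choose the radius $\varepsilon:=3^{1/q}a\,i^{-1/q}$, designed so that $(a/\varepsilon)^q=i/3$. The hypothesis then gives $\log\mathcal{N}(T,\varepsilon)\le i/3$, i.e.\ $\mathcal{N}(T,\varepsilon)\le\e^{i/3}$. A direct numerical check shows $\e^{i/3}<2^{i-1}+1$ for every $i\in\N$ (for $i\ge 3$ one even has $\e^{i/3}<2^{i-1}$, while $i=1,2$ are immediate), and since $\mathcal{N}(T,\varepsilon)$ is a positive integer this forces $\mathcal{N}(T,\varepsilon)\le 2^{i-1}$. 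Consequently $TB_E$ is covered by $2^{i-1}$ closed balls of radius $\varepsilon$, so $e_i(T)\le\varepsilon=3^{1/q}a\,i^{-1/q}$.

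There is no genuine conceptual obstacle here — this is the classical correspondence between dyadic entropy numbers and covering numbers — so the only points deserving care are the strict-versus-nonstrict inequalities hidden in the infimum defining $e_i(T)$ (handled above by taking the index one larger than strictly necessary, which is also why one comfortably lands inside the constant $\log 4$ in (i)) and the two elementary inequalities $\lfloor (a/\varepsilon)^q\rfloor\log2\le\log4\cdot(a/\varepsilon)^q$ and $\e^{i/3}<2^{i-1}+1$, both of which are routine.
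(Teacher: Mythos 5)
Your argument is correct. The paper itself does not give a proof of this lemma; it simply cites \cite[Lemma~6.21 and Exercise~6.8]{SteinwartChristmannSVMs}, so there is no internal proof to compare against. Your reconstruction is the standard one and exactly what one expects to find behind those references: you route both implications through the elementary, attainment-free translations (if $e_i(T)<\varepsilon$ then $\cN(T,\varepsilon)\le 2^{i-1}$; if $\cN(T,\varepsilon)\le 2^{i-1}$ then $e_i(T)\le\varepsilon$), and then optimise $i$ in part (i) via $i=\lfloor(a/\varepsilon)^q\rfloor+1$ and optimise $\varepsilon$ in part (ii) via $\varepsilon=3^{1/q}a\,i^{-1/q}$. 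The two constant checks are right: $\lfloor x\rfloor\log 2\le x\log 4$ gives (i) with room to spare, and $\e^{i/3}<2^{i-1}+1$ for all $i\in\N$ (direct for $i=1,2$; for $i\ge 3$ already $\e^{i/3}<2^{i-1}$ since $1/3<\frac{2}{3}\log 2$ and the gap grows) forces the integer inequality $\cN(T,\varepsilon)\le 2^{i-1}$ in (ii). No gaps.
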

The first assertion is the statement of \cite[Lemma 6.21]{SteinwartChristmannSVMs}, the second assertion is the content of \cite[Exercise 6.8]{SteinwartChristmannSVMs}.
\section{A General Oracle Inequality}
In this section we will proof a general oracle inequality for regularized empirical risk minimizers under Assumptions \ref{ass:assouad_dim} and \ref{ass:marginal} for bounded loss functions satisfying a so-called variance bound.
\begin{dfn}\label{dfn:variance_bound}
Let $L$ be a loss that can be clipped at $M>0$ and let $\mathcal{F}$ be some function class of measurable functions $f:X\rightarrow\R$. Assume there exists a Bayes decision function $f_{L,\P}^*:X\rightarrow[-M,M]$. We say, that a supremum bound is satisfied, if there exists a constant $B>0$, such that $L(y,t)\leq B$ for all $(y,t)\in Y\times[-M,M]$. We further say, that a variance bound is satisfied, if there exist $\vartheta\in[0,1]$ and $V\geq B^{2-\vartheta}$, such that
\begin{align*}
\E (L\circ \wideparen{f}-L\circ f_{L,\P}^*)^2 \leq V\cdot \left( \E \,L\circ \wideparen{f}-L\circ f_{L,\P}^*\right)^\vartheta
\end{align*}
for all $f\in\mathcal{F}$.
\end{dfn}

In the following, we first collect some preliminary results which are mainly used to bound the term
\begin{equation*}
\E_{D\sim\mu^n}e_i(\id:\LRKHS \rightarrow L_2(\D))\nonumber,
\end{equation*}
which in turn is our main tool for bounding the statistical error of our estimator $\wideparen{f}_{D,\blambda,\bgamma}$. To this end, first recall that we assume that $\mathcal{A}=(A_j)_{j=1,\ldots,m}$ is a Voronoi partition of our input space $X\subset\R^d$ with respect to the centers $C=\{c_1,\ldots,c_m\}$ constructed by the FFT algorithm \ref{alg:FFT} based on a sample drawn from $\mu^n$.

\begin{lem}\label{lem:empirical_distance}
Let Assumption \ref{ass:marginal} be satisfied for the constants $C_\mu$ and $\delta$. Then we have
\begin{align*}
\mu^n\left( x_1,\ldots,x_n:\sup_{x\in S} \min_{i=1,\ldots,n} \norm{x_i-x}>\tau \right)\leq m\exp\big(-C_\mu^{-1}(\tau-\varepsilon_m(S))^\delta n\big)
\end{align*}
for all $\varepsilon_m(S)<\tau\leq\varepsilon_m(S)+\diam\, S$.
\end{lem}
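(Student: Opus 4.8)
The plan is to bound the probability that some point of $S$ is far from all sample points by a union bound over the cells of a deterministic net of $S$, combined with the observation that the sample points are themselves drawn from $\mu$ and so are unlikely to avoid any fixed ball that carries non-trivial $\mu$-mass. First I would invoke Lemma \ref{lem:existence_eps_net} to fix, once and for all, a deterministic $\varepsilon_m(S)$-net $N=\{z_1,\ldots,z_m\}\subset S$ of $S$ with $|N|=m$ (here we use that $S$ is compact, which follows from Assumption \ref{ass:assouad_dim} giving boundedness of $S$, together with $S=\supp\mu$ being closed). The key geometric observation is the following reduction: if $x\in S$ with $\min_i\norm{x_i-x}>\tau$, pick $z_j\in N$ with $\norm{x-z_j}\le\varepsilon_m(S)$; then by the triangle inequality $\min_i\norm{x_i-z_j}>\tau-\varepsilon_m(S)=:s$. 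Hence the event $\{\sup_{x\in S}\min_i\norm{x_i-x}>\tau\}$ is contained in $\bigcup_{j=1}^m\{\min_i\norm{x_i-z_j}>s\}$, i.e.\ in the event that some net point $z_j$ has no sample point within distance $s$.

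Next I would estimate $\mu^n(\min_i\norm{x_i-z_j}>s)$ for a fixed $j$. Since the $x_i$ are i.i.d.\ from $\mu$, this probability equals $(1-\mu(B_s(z_j)))^n$. Because $z_j\in S$ and, by the constraint $\tau\le\varepsilon_m(S)+\diam S$, we have $0<s\le\diam S$, Assumption \ref{ass:marginal} applies and gives $\mu(B_s(z_j))\ge C_\mu^{-1}s^\delta$. Therefore
\begin{equation*}
\mu^n\Big(\min_{i=1,\ldots,n}\norm{x_i-z_j}>s\Big)=\big(1-\mu(B_s(z_j))\big)^n\le\big(1-C_\mu^{-1}s^\delta\big)^n\le\exp\big(-C_\mu^{-1}s^\delta n\big),
\end{equation*}
using $1-u\le\e^{-u}$. (If $s\le 0$, i.e.\ $\tau\le\varepsilon_m(S)$, there is nothing to prove or the bound is vacuous; the hypothesis $\tau>\varepsilon_m(S)$ ensures $s>0$.) Summing this over $j=1,\ldots,m$ via the union bound and substituting $s=\tau-\varepsilon_m(S)$ yields exactly the claimed inequality $m\exp(-C_\mu^{-1}(\tau-\varepsilon_m(S))^\delta n)$.

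I do not anticipate a genuine obstacle here; the only subtlety worth being careful about is making sure the hypotheses of Assumption \ref{ass:marginal} are met, namely that the radius $s=\tau-\varepsilon_m(S)$ satisfies $0<s\le\diam S$ — the former from the stated range $\tau>\varepsilon_m(S)$ and the latter from $\tau\le\varepsilon_m(S)+\diam S$ — and that the net $N$ can indeed be taken to lie \emph{inside} $S$ with exactly $m$ points, which is precisely why Lemma \ref{lem:existence_eps_net} (and the in-set requirement built into the definition of $\varepsilon_m$) is needed rather than a generic covering-number bound. Everything else is the elementary triangle-inequality reduction and the standard $1-u\le\e^{-u}$ estimate.
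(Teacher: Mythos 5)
Your proposal is correct and follows essentially the same route as the paper's proof: fix an $\varepsilon_m(S)$-net of size $m$ inside $S$ via Lemma \ref{lem:existence_eps_net}, reduce $\sup_{x\in S}$ to a maximum over the net points by the triangle inequality, compute the avoidance probability of a fixed ball as $(1-\mu(B_s(z_j)))^n$, apply Assumption \ref{ass:marginal} and $1-u\le\e^{-u}$, and finish with a union bound. The only cosmetic difference is that you phrase the triangle-inequality step as an event inclusion rather than as a pointwise inequality between the two minima, but the content is identical.
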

\begin{proof}
First note that $\min_{i=1,\ldots,n}\norm{x_i-x}>\tau$ if and only if $\norm{x_i-x}>\tau$ for all $i=1,\ldots,n$, which implies
\begin{equation}
\mu^n\left(x_1,\ldots,x_n: \min_{i=1,\ldots,n}\norm{x_i-x}>\tau \right)=\big(1-\mu(B_\tau(x))\big)^n \label{eqn:ball_proba}
\end{equation}
for all $x\in S$ and $\tau>0$. With the help of Lemma \ref{lem:existence_eps_net} let $N\subset S$ be an $\varepsilon_m(S)$-net of $S$ with $|N|=m$. Now, for every $x\in S$ there exists an $x'\in N$ such that
\begin{equation*}
\min_{i=1,\ldots,n} \norm{x_i-x}\leq\min_{i=1,\ldots,n} \norm{x_i-x'}+\norm{x'-x}\leq\min_{i=1,\ldots,n} \norm{x_i-x'}+\varepsilon_m(S),
\end{equation*}
which combined with (\ref{eqn:ball_proba}) implies
\begin{align*}
\mu^n\left( \sup_{x\in S} \min_{i=1,\ldots,n} \norm{x_i-x}>\tau \right)&\leq \mu^n\left( \max_{x\in N} \min_{i=1,\ldots,n}\norm{x_i-x}+\varepsilon_m(S)>\tau \right) \\
&\leq \sum_{x\in N} \big( 1-\mu(B_{\tau-\varepsilon_m(S)}(x)) \big)^n 
\end{align*}
for $\tau-\varepsilon_m(S)>0$. Using Assumption \ref{ass:marginal} we can further bound this by
\begin{align*}
\sum_{x\in N} \big( 1-\mu(B_{\tau-\varepsilon_m(S)}(x)) \big)^n &\leq m \big(1-C_\mu^{-1} (\tau-\varepsilon_m(S))^\delta\big)^n \\
&\leq m \exp\big(-C_\mu^{-1}(\tau-\varepsilon_m(S))^\delta n\big)
\end{align*}
for $\tau-\varepsilon_m(S)\leq\diam\, S$, which proves the assertion.
\end{proof}
\begin{cor}\label{cor:prob_opt_covering}
Let Assumption \ref{ass:marginal} be satisfied. Then for all $m\leq n$ we have $A_j\cap S\subset B_{6\varepsilon_m(S)}(c_j)$ for all $j=1,\ldots,m$ simultaneously with probability not less than
\begin{equation*}
1-m\exp\big(-C_\mu^{-1} n \varepsilon_m(S)^\delta \big).
\end{equation*}
\end{cor}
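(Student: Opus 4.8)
The plan is to chain together the approximation guarantee of the FFT algorithm with the uniform covering estimate of Lemma \ref{lem:empirical_distance}. First I would record that under Assumption \ref{ass:marginal} the measure $\mu$ has no atoms (since $\mu(B_r(x))\geq C_\mu^{-1}r^\delta$ forces $\mu(\{x\})=0$), so $\mu^n$-almost surely $V:=\{x_1,\dots,x_n\}$ consists of $n$ distinct points, all contained in $S=\supp\mu$, and $\varepsilon_m(S)>0$. By the very definition of the entropy number, a size-$m$ $\varepsilon$-net of $V$ that is contained in $V$ is exactly a feasible solution of the $m$-center objective (\ref{eqn:metric_kcenter_objective}) of value at most $\varepsilon$; using Lemma \ref{lem:existence_eps_net} applied to the finite (hence compact) set $V$, the optimal value of (\ref{eqn:metric_kcenter_objective}) on $V$ is therefore at most $\varepsilon_m(V)$. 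Since the FFT centers $C=\{c_1,\dots,c_m\}$ are within a factor $2$ of this optimum, we get $\max_{v\in V}\min_{j=1,\dots,m}\norm{v-c_j}\leq 2\varepsilon_m(V)$, and because $V\subset S$, Lemma \ref{lem:entropy_subset} gives $\varepsilon_m(V)\leq 2\varepsilon_m(S)$, so altogether $\max_{v\in V}\min_j\norm{v-c_j}\leq 4\varepsilon_m(S)$.

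Next I would invoke Lemma \ref{lem:empirical_distance} with $\tau:=2\varepsilon_m(S)$. This is admissible: $\varepsilon_m(S)<2\varepsilon_m(S)\leq \varepsilon_m(S)+\diam S$, where the second inequality uses $\varepsilon_m(S)\leq\diam S$ from Lemma \ref{lem:entropy_vs_diam}. Hence, with $\mu^n$-probability at least $1-m\exp(-C_\mu^{-1}(2\varepsilon_m(S)-\varepsilon_m(S))^\delta n)=1-m\exp(-C_\mu^{-1}n\varepsilon_m(S)^\delta)$, every $x\in S$ admits an index $i$ with $\norm{x-x_i}\leq 2\varepsilon_m(S)$. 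On this event, fix $j$ and $x\in A_j\cap S$. Since $A_j$ is the Voronoi cell of $c_j$, the center $c_j$ is a closest center to $x$, so by the triangle inequality
\[
\norm{x-c_j}=\min_{l=1,\dots,m}\norm{x-c_l}\leq \norm{x-x_i}+\min_{l=1,\dots,m}\norm{x_i-c_l}\leq 2\varepsilon_m(S)+4\varepsilon_m(S)=6\varepsilon_m(S),
\]
that is, $x\in B_{6\varepsilon_m(S)}(c_j)$. As $j$ and $x$ were arbitrary, this yields $A_j\cap S\subset B_{6\varepsilon_m(S)}(c_j)$ for all $j=1,\dots,m$ simultaneously on the stated event.

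The argument is essentially a routine concatenation of three ingredients, so there is no genuine obstacle; the only points requiring a little care are the identification of the $m$-center optimum on $V$ with a quantity bounded by $\varepsilon_m(V)$, and checking that $\tau=2\varepsilon_m(S)$ lies in the admissible range of Lemma \ref{lem:empirical_distance}. If one preferred not to appeal to non-atomicity, one could instead observe that in the degenerate case $\varepsilon_m(S)=0$ the asserted probability bound $1-m$ is non-positive for $m\geq 1$, so the statement is vacuously true there.
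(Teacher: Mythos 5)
Your proof is correct and follows essentially the same route as the paper: FFT's $2$-approximation of the $m$-center objective on $V$, transfer from $\varepsilon_m(V)$ to $\varepsilon_m(S)$ via Lemma~\ref{lem:entropy_subset}, and an application of Lemma~\ref{lem:empirical_distance} with $\tau=2\varepsilon_m(S)$ (admissible by Lemma~\ref{lem:entropy_vs_diam}), chained by the triangle inequality. The extra observations you add — identifying the optimal $m$-center value on $V$ with $\varepsilon_m(V)$, and noting the degenerate case $\varepsilon_m(S)=0$ is vacuous — are sound and slightly more careful than the paper's presentation, but do not change the argument.
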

\begin{proof}
For $x\in S$ let $c(x)\in C$ be its respective Voronoi center and let $D=\{x_1,\ldots,x_n\}$. Recall, that since the FFT algorithm produces a 2-approximation of the metric $k$-center problem, we have $\norm{x_i-c(x_i)}\leq 2\varepsilon_m(D)$ for all $i=1,\ldots,n$. Consequently, we can estimate
\begin{align*}
\norm{x-c(x)}&= \min_{i=1,\ldots, n}\norm{x-c(x_i)}\leq\min_{i=1,\ldots,n} \norm{x-x_i}+\norm{x_i-c(x_i) } \\
&\leq \min_{i=1,\ldots,n} \norm{x-x_i}+2\varepsilon_m(D)\leq \min_{i=1,\ldots,n} \norm{x-x_i}+4\varepsilon_m(S),
\end{align*}
where in the last step we used Lemma \ref{lem:entropy_subset}. Applying Lemma \ref{lem:empirical_distance} with $\tau=2\varepsilon_m(S)$ subsequently gives us
\begin{equation*}
\sup_{x\in S}\norm{x-c(x)}\leq 6\varepsilon_m(S)
\end{equation*}
with probability not less than
\begin{equation*}
1-m\exp\big(-C_\mu^{-1} n \varepsilon_m(S)^\delta \big).
\end{equation*}
Note that the prerequisite $\varepsilon_m(S)<\tau\leq\varepsilon_m(S)+\diam\, S$ of Lemma \ref{lem:empirical_distance} is fulfilled for $\tau=2\varepsilon_m(S)$ because of Lemma \ref{lem:entropy_vs_diam}.
\end{proof}
\begin{lem}\label{lem:entropy_local_kernel}
Assume there exist constants $a_1,\ldots,a_m>0$ and $q>0$ such that
\begin{align*}
e_i(\id:H_{\gamma_j}(A_j)\rightarrow \ell_\infty(A_j\cap S)) \leq a_j\, i^{-\frac{1}{q}}.
\end{align*}
for all $i\in\N$ and $j=1,\ldots,m$. Then we have
\begin{align*}
e_i(\id:\LRKHS\rightarrow\ell_\infty(S))\leq (3\log 4)^{\frac{1}{q}}\left( \sum_{j=1}^m \left( \frac{a_j}{\sqrt{\lambda_j}} \right)^q \right)^\frac{1}{q} i^{-\frac{1}{q}}.
\end{align*}
for all $i\in\N$.
\end{lem}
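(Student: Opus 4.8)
The goal is to bound the dyadic entropy numbers of the identity map from the localized RKHS $\LRKHS$ into $\ell_\infty(S)$ in terms of the corresponding entropy numbers on each cell. The natural strategy is to view $\LRKHS$ as an $\ell_2$-type direct sum of the rescaled cell spaces $\lambda_j^{1/2}H_{\gamma_j}(A_j)$ and $\ell_\infty(S)$ as (essentially) an $\ell_\infty$-type direct sum of the $\ell_\infty(A_j\cap S)$, since the cells $A_j$ are disjoint and cover $X\supset S$. Then the identity decomposes as a direct sum of the cellwise identities, and one invokes a known entropy-number estimate for operators between such direct sums. Concretely, I would first restrict attention to $S$: a function $f\in\LRKHS$ satisfies $f|_{A_j\cap S}=(f|_{A_j})|_{A_j\cap S}$, and by Corollary \ref{cor:restricted_kernel} the restriction $H_{\gamma_j}(A_j)\to H_{\gamma_j}(A_j)|_{A_j\cap S}$ is a metric surjection, so the hypothesis on $e_i(\id:H_{\gamma_j}(A_j)\to\ell_\infty(A_j\cap S))$ already incorporates this restriction and we may work directly with the spaces $\ell_\infty(A_j\cap S)$.

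The key step is the following: if $T_j:E_j\to F_j$ are operators with $e_i(T_j)\le a_j i^{-1/q}$, and $T=\bigoplus T_j$ maps the $\ell_2$-sum $\bigl(\bigoplus E_j\bigr)_{\ell_2}$ into the $\ell_\infty$-sum $\bigl(\bigoplus F_j\bigr)_{\ell_\infty}$, then $e_i(T)\lesssim_q \bigl(\sum_j a_j^q\bigr)^{1/q} i^{-1/q}$. Since $\norm{f}_{\LRKHS}^2=\sum_j\lambda_j\norm{f|_{A_j}}_{H_{\gamma_j}(A_j)}^2$, the unit ball of $\LRKHS$ is exactly the $\ell_2$-sum of the balls $\lambda_j^{-1/2}B_{H_{\gamma_j}(A_j)}$, so the $j$-th operator is $\lambda_j^{-1/2}\,\id:H_{\gamma_j}(A_j)\to\ell_\infty(A_j\cap S)$ with entropy numbers bounded by $(a_j/\sqrt{\lambda_j})\,i^{-1/q}$; and $\norm{f}_{\ell_\infty(S)}=\max_j\norm{f|_{A_j\cap S}}_{\ell_\infty(A_j\cap S)}$ because the $A_j$ partition $X$ (here one must be slightly careful that $S\subset\bigcup_j A_j$ and the cells are disjoint, which holds by construction of the Voronoi partition — ties are broken by index, so every point lies in exactly one cell). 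Plugging $a_j\rightsquigarrow a_j/\sqrt{\lambda_j}$ into the direct-sum estimate yields the claimed bound, with the constant $(3\log 4)^{1/q}$ coming from the constant in that estimate.

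For the direct-sum estimate itself I would first pass from entropy numbers to covering numbers via Lemma \ref{lem:equivalence_entropy_covering_numbers}(i): $\log\cN(T_j,\varepsilon)\le\log4\,(a_j/\varepsilon)^q$. The covering number of $T$ at scale $\varepsilon$ is then controlled by a product-covering argument over the cells — for a point in $TB$ with components $(u_j)_j$, choosing an $\varepsilon$-cover of each $T_jB_{E_j}$ and taking products gives an $\varepsilon$-cover in the $\ell_\infty$-norm of the range, but one must account for the fact that the $\ell_2$ constraint $\sum\norm{e_j}_{E_j}^2\le1$ lets the individual radii $r_j:=\norm{e_j}_{E_j}$ vary. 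This is handled by the standard device of first covering, at a coarser scale, the possible vectors $(r_j)_j$ in the unit ball of $\ell_2^m$, and on each such piece using scaled covers of radius $\varepsilon r_j$ of the cells, so that $\log\cN(T,\varepsilon)\lesssim \sum_j\log\cN(T_j,\cdot)\lesssim\log4\sum_j(a_j/\varepsilon)^q$ up to the extra multiplicative factor accounting for the $(r_j)$-cover (this is where the factor $3$ enters). Converting back to entropy numbers via Lemma \ref{lem:equivalence_entropy_covering_numbers}(ii) with $a\rightsquigarrow(\log4)^{1/q}\bigl(\sum_j(a_j/\sqrt{\lambda_j})^q\bigr)^{1/q}$ (after inserting the $\lambda_j$-rescaling) and tracking the constant $3^{1/q}$ from that lemma produces exactly $(3\log4)^{1/q}$.

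**Main obstacle.** The bookkeeping in the direct-sum covering argument — in particular, correctly handling the $\ell_2$-to-$\ell_\infty$ mismatch between source and target so that the cell covering numbers add rather than multiply in the exponent, and extracting precisely the constant $(3\log 4)^{1/q}$ rather than something weaker — is the only delicate part; the reduction of $\LRKHS\to\ell_\infty(S)$ to a direct sum of cellwise identities and the $\lambda_j$-rescaling are essentially formal once the disjointness and covering properties of the Voronoi cells are in hand.
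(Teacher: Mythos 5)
Your overall route matches the paper's: convert the cellwise entropy hypotheses to covering-number bounds via Lemma \ref{lem:equivalence_entropy_covering_numbers}(i), rescale by $\lambda_j^{-1/2}$, sum the log-covering numbers over the cells, then convert back via Lemma \ref{lem:equivalence_entropy_covering_numbers}(ii). (The paper cites \cite[Lemma A.7]{HammSteinwart_IntrinsicDimension} for the summation step; you reprove it by a product cover, which is fine.) However, your account of that summation step is internally confused and would, if carried out literally, spoil the constant.

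You claim that because the $\ell_2$-constraint $\sum_j\lambda_j\norm{f|_{A_j}}^2\le 1$ lets the cellwise radii $r_j$ vary, one must first cover the vector $(r_j)_j$ in the unit ball of $\ell_2^m$ at a coarser scale, and that this $(r_j)$-cover is ``where the factor $3$ enters'' — yet two sentences later you also attribute the $3^{1/q}$ to Lemma \ref{lem:equivalence_entropy_covering_numbers}(ii). Both cannot hold simultaneously, and the first is wrong. No $(r_j)$-cover is needed, and introducing one would only add an unwanted multiplicative term. The point you are missing is that the $\ell_2$-coupling on the source is \emph{harmless} here: since every summand of $\sum_j\lambda_j\norm{f|_{A_j}}^2\le 1$ is nonnegative, it already implies $f|_{A_j}\in\lambda_j^{-1/2}B_{H_{\gamma_j}(A_j)}$ for \emph{every} $j$ simultaneously, i.e.\ $B_{\LRKHS}$ sits inside the plain Cartesian product $\prod_j\lambda_j^{-1/2}B_{H_{\gamma_j}(A_j)}$. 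Because the cells partition $S$, the target norm is $\norm{g}_{\ell_\infty(S)}=\max_j\norm{g|_{A_j\cap S}}_{\ell_\infty(A_j\cap S)}$, so taking Cartesian products of $\varepsilon$-covers of the cell balls yields an $\varepsilon$-cover of $B_{\LRKHS}$ at the \emph{same} radius, giving
\begin{equation*}
\log\cN_{\ell_\infty(S)}\!\left(B_{\LRKHS},\varepsilon\right)\le\sum_{j=1}^m\log\cN_{\ell_\infty(A_j\cap S)}\!\left(\lambda_j^{-1/2}B_{H_{\gamma_j}(A_j)},\varepsilon\right)
\end{equation*}
with no extra factor. The $\ell_2$-to-$\ell_\infty$ ``mismatch'' you worry about works in your favor, not against you. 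The factor $3^{1/q}$ comes exclusively from Lemma \ref{lem:equivalence_entropy_covering_numbers}(ii) and the $\log 4$ from part (i); once you drop the $(r_j)$-cover, your argument produces precisely the stated bound.
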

\begin{proof}
By Lemma \ref{lem:equivalence_entropy_covering_numbers} we have
\begin{align*}
\log \cN_{\ell_\infty(A_j\cap S)}\left(B_{H_{\gamma_j}(A_j)},\varepsilon\right)\leq \log(4) \left(\dfrac{a_j}{\varepsilon}\right)^q
\end{align*}
for all $\varepsilon>0$ and hence
\begin{align*}
\log \cN_{\ell_\infty(A_j\cap S)}\left(\lambda^{-\frac{1}{2}}B_{H_{\gamma_j}(A_j)},\varepsilon\right)\leq \log(4) \left(\dfrac{a_j}{\varepsilon\sqrt{\lambda_j}}\right)^q,
\end{align*}
which yields
\begin{align*}
\log\cN_{\ell_\infty(S)}\left(B_{\LRKHS},\varepsilon\right)\leq& \sum_{j=1}^m\log \cN_{\ell_\infty(A_j\cap S)}\left( \lambda_j^{-\frac{1}{2}}B_{H_{\gamma_j}(A_j)},\varepsilon \right) \\
\leq& \sum_{j=1}^m\log(4)\left(\dfrac{a_j}{\varepsilon \sqrt{\lambda_j}}\right)^q
\end{align*}
where in the first estimate we used \cite[Lemma A.7]{HammSteinwart_IntrinsicDimension}. Finally, we again turn this into a bound on the dyadic entropy numbers using Lemma \ref{lem:equivalence_entropy_covering_numbers}, which completes the proof.
\end{proof}
\begin{lem}[{\cite[Theorem A.2]{HammSteinwart_IntrinsicDimension}}]\label{lem:entropy_numbers_gauss_rkhs}
There exists a universal constant $K$ only depending on $d$, such that
\begin{align*}
e_i(\id:H_\gamma(A)\rightarrow \ell_\infty(A))\leq K^{\frac{1}{2p}}p^{-\frac{d+1}{2p}}\cN(A,\gamma)^{\frac{1}{2p}}\,i^{-\frac{1}{2p}}
\end{align*}
holds for all $A\subset\R^d$, $i\in\N$, $p\in(0,1)$ and $\gamma>0$.
\end{lem}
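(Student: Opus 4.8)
The plan is to reduce the statement to the single classical fact about the entropy of the unit-width Gaussian RKHS on the unit ball, and then to propagate it to arbitrary $A$ and $\gamma$ by scaling and a covering argument; the explicit factor $p^{-(d+1)/(2p)}$ will drop out of an elementary optimisation at the very end. Throughout we may assume $\cN(A,\gamma)<\infty$, since otherwise the right-hand side is $+\infty$ and there is nothing to prove.

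The base case is the bound
\[
\log\cN_{\ell_\infty(B_1(0))}\big(B_{H_1(B_1(0))},\delta\big)\leq c_d\,(1+\log(1/\delta))^{d+1},\qquad 0<\delta\leq 1,
\]
with $c_d$ depending only on $d$. This is the genuinely hard ingredient, but it is classical: the elements of $B_{H_1(B_1(0))}$ are restrictions of entire functions with explicitly controlled derivatives, hence can be approximated uniformly on $B_1(0)$ by polynomials of degree $n$ with error decaying exponentially in $n$, and combining this with the dimension bound $\binom{n+d}{d}$ for the space of polynomials of degree at most $n$ in $d$ variables gives the estimate; see \cite[Section 6.4]{SteinwartChristmannSVMs}. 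In practice one would just quote this.

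Next comes scaling. For $x\in\R^d$ and $\gamma>0$, the affine bijection $\psi(z):=x+\gamma z$ from $B_1(0)$ onto $B_\gamma(x)$ satisfies $k_\gamma(\psi(z),\psi(z'))=k_1(z,z')$, so by Lemma \ref{lem:transformed_kernel} the map $f\mapsto f\circ\psi$ is an isometric isomorphism $H_\gamma(B_\gamma(x))\to H_1(B_1(0))$ which also preserves the sup-norm; consequently $\cN_{\ell_\infty(B_\gamma(x))}\big(B_{H_\gamma(B_\gamma(x))},\delta\big)=\cN_{\ell_\infty(B_1(0))}\big(B_{H_1(B_1(0))},\delta\big)$ for every $\delta>0$. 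Then comes the covering. Fix a minimal $\gamma$-net $\{x_1,\ldots,x_N\}$ of $A$ with $N=\cN(A,\gamma)$ and set $B_k:=B_\gamma(x_k)\cap A$, so $A=\bigcup_k B_k$ and $B_k\subseteq B_\gamma(x_k)$. As $B_k\subseteq A$ and $B_k\subseteq B_\gamma(x_k)$, Corollary \ref{cor:restricted_kernel} makes restriction a metric surjection in both cases, so $\{f|_{B_k}:f\in B_{H_\gamma(A)}\}\subseteq B_{H_\gamma(B_k)}$ and a $\delta$-net of $B_{H_\gamma(B_\gamma(x_k))}$ in $\ell_\infty(B_\gamma(x_k))$ restricts to a $\delta$-net of $B_{H_\gamma(B_k)}$ in $\ell_\infty(B_k)$. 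A routine product-of-local-nets argument, using that two functions of $B_{H_\gamma(A)}$ matched to the same local net element on every $B_k$ differ by at most $2\delta$ in $\ell_\infty(A)$, then yields
\[
\log\cN_{\ell_\infty(A)}\big(B_{H_\gamma(A)},2\delta\big)\leq\sum_{k=1}^{N}\log\cN_{\ell_\infty(B_k)}\big(B_{H_\gamma(B_k)},\delta\big)\leq N\,c_d\,(1+\log(2/\delta))^{d+1}
\]
for $0<\delta\leq 2$, while for $\delta>2$ the left-hand side vanishes since $B_{H_\gamma(A)}$ has $\ell_\infty(A)$-radius at most $1$.

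It remains to assemble and optimise. Putting $\varepsilon=2\delta$, the previous display reads $\log\cN_{\ell_\infty(A)}(B_{H_\gamma(A)},\varepsilon)\leq c_d'\,N\,(1+\log(1/\varepsilon))^{d+1}$ for all $\varepsilon>0$, and since $\sup_{t>0}t^{d+1}\e^{-2pt}=((d+1)/(2p\e))^{d+1}$ and $p<1$, one bounds $(1+\log(1/\varepsilon))^{d+1}\leq C_d\,p^{-(d+1)}\,\varepsilon^{-2p}$ for every $p\in(0,1)$. Hence $\log\cN_{\ell_\infty(A)}(B_{H_\gamma(A)},\varepsilon)\leq(a/\varepsilon)^{2p}$ with $a=(c_d'C_d N)^{1/(2p)}p^{-(d+1)/(2p)}$, and Lemma \ref{lem:equivalence_entropy_covering_numbers}(ii) applied with $q=2p$ turns this into $e_i(\id:H_\gamma(A)\to\ell_\infty(A))\leq 3^{1/(2p)}a\,i^{-1/(2p)}$, which is the assertion with $K:=\max\{3c_d'C_d,1\}$. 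The main obstacle is the base estimate; the rest is scaling, covering bookkeeping, and calculus, and should cause no real difficulty.
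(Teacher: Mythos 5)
The paper does not prove this lemma; it simply quotes \cite[Theorem A.2]{HammSteinwart_IntrinsicDimension}, so there is no in-paper argument to compare against. Your reconstruction — scaling the polylogarithmic covering bound for the unit-width Gaussian on the unit ball to balls of radius $\gamma$, summing the log-covering numbers over a minimal $\gamma$-net of $A$ (using that restriction is a metric surjection of the closed unit ball, which holds because the minimizing extension in a Hilbert-space RKHS is attained), and then trading $(\log(1/\varepsilon))^{d+1}$ for $C_d\,p^{-(d+1)}\varepsilon^{-2p}$ before invoking Lemma~\ref{lem:equivalence_entropy_covering_numbers}(ii) — is precisely the standard route to this estimate and is correct. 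The only cosmetic point is that for $\varepsilon>\e$ the factor $(1+\log(1/\varepsilon))^{d+1}$ should be read as $(1+\log(1/\varepsilon))_+^{d+1}$; this is immaterial since the left-hand covering number is already $1$ there.
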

\begin{cor}\label{cor:entroby_bound}
Let Assumptions \ref{ass:assouad_dim_ast} and \ref{ass:marginal} be satisfied. Then there exists a constant $K>0$ such that 
\begin{align*}
&\E_{D\sim\mu^n}e_i(\id:\LRKHS \rightarrow L_2(\D)) \\
\leq& (C_S^{\varrho+1}K)^\frac{1}{2p}p^{-\frac{d+1}{2p}} \lambda^{-\frac{1}{2}} \gamma^{-\frac{\varrho}{2p}} \left(1+m^{1+\frac{1}{2p}}\exp(-C_\mu^{-1} C_S^{-\varrho} n m^{-\delta/\varrho})\right)\, i^{-\frac{1}{2p}}
\end{align*}
for all $\gamma_1=\ldots=\gamma_m=\gamma\leq m^{-1/\varrho}$, $\lambda_1=\ldots=\lambda_m$, and $p\in(0,1)$.
\end{cor}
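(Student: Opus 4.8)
The plan is to reduce the quantity to the entropy numbers of the individual Gaussian RKHSs on the Voronoi cells and to control each of these through the covering number of $A_j\cap S$. Since $\mu(S)=1$, all sample points lie in $S$ almost surely, so $\norm{f}_{L_2(\D)}\le\norm{f}_{\ell_\infty(S)}$ for every $f\in\LRKHS$ and it suffices to bound $\E_{D\sim\mu^n}e_i(\id:\LRKHS\to\ell_\infty(S))$. By Corollary \ref{cor:restricted_kernel} the restriction map $H_{\gamma_j}(A_j)\to H_{\gamma_j}(A_j\cap S)$ is a metric surjection, whence
\begin{equation*}
e_i(\id:H_{\gamma_j}(A_j)\to\ell_\infty(A_j\cap S))=e_i(\id:H_{\gamma_j}(A_j\cap S)\to\ell_\infty(A_j\cap S)),
\end{equation*}
and Lemma \ref{lem:entropy_numbers_gauss_rkhs}, applied on $A_j\cap S$ with $q=2p$, bounds the right-hand side in terms of $\cN(A_j\cap S,\gamma)$. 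I would then feed these per-cell bounds (all $\lambda_j=\lambda$) into Lemma \ref{lem:entropy_local_kernel}, which multiplies the per-cell estimate by a factor $m^{1/(2p)}$ coming from the sum over the $m$ cells.

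The heart of the argument is the covering-number estimate, split according to the event $G$ from Corollary \ref{cor:prob_opt_covering} on which $A_j\cap S\subset B_{6\varepsilon_m(S)}(c_j)$ for all $j$ simultaneously. On $G$, applying Assumption \ref{ass:assouad_dim} with the centre $c_j\in S$ and radius $6\varepsilon_m(S)$ and using the bound $\varepsilon_m(S)\le C_S m^{-1/\varrho}$ from Assumption \ref{ass:assouad_dim_ast}, I would obtain, in the case $\gamma\le 6\varepsilon_m(S)$,
\begin{equation*}
\cN(A_j\cap S,\gamma)\le C_S\Big(\tfrac{6\varepsilon_m(S)}{\gamma}\Big)^{\varrho}\le 6^\varrho C_S^{\varrho+1}\,m^{-1}\gamma^{-\varrho},
\end{equation*}
while if $\gamma>6\varepsilon_m(S)$ the set $A_j\cap S$ is covered by a single $\gamma$-ball and, since $\gamma\le m^{-1/\varrho}$, the same right-hand side still dominates $1$. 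The crucial point is the factor $m^{-1}$ here: its $(2p)$-th root exactly cancels the $m^{1/(2p)}$ produced by Lemma \ref{lem:entropy_local_kernel}, which is precisely why the number of cells disappears from the final estimate. On the complement $G^c$ I would only use the trivial inclusion $A_j\cap S\subset S$ together with the crude bound $\cN(S,\gamma)\le 2C_S^\varrho\gamma^{-\varrho}$ (obtained from $\varepsilon_{m^*}(S)\le C_S(m^*)^{-1/\varrho}$ with $m^*=\lceil(C_S/\gamma)^\varrho\rceil$ and Lemma \ref{lem:existence_eps_net}), which does not carry the $m^{-1}$ gain and hence leaves an extra factor $m^{1/(2p)}$.

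Collecting, on $G$ one gets $e_i(\id:\LRKHS\to\ell_\infty(S))\le (KC_S^{\varrho+1})^{1/(2p)}p^{-(d+1)/(2p)}\gamma^{-\varrho/(2p)}\lambda^{-1/2}i^{-1/(2p)}$, where $K$ is a $d$-dependent constant absorbing $3\log 4$, the constant of Lemma \ref{lem:entropy_numbers_gauss_rkhs} and $6^\varrho\le 6^d$; on $G^c$ the same bound holds with one extra factor $m^{1/(2p)}$. Finally I would split $\E\,e_i=\E[e_i\mathbf{1}_G]+\E[e_i\mathbf{1}_{G^c}]$, estimate $\E[e_i\mathbf{1}_{G^c}]\le(\text{bound on }G^c)\cdot\P(G^c)$ with $\P(G^c)\le m\exp(-C_\mu^{-1}n\varepsilon_m(S)^\delta)$ from Corollary \ref{cor:prob_opt_covering}, and insert the lower bound $\varepsilon_m(S)\ge C_S^{-1}m^{-1/\varrho}$ from Assumption \ref{ass:assouad_dim_ast} into the exponent; this yields the claimed factor $1+m^{1+1/(2p)}\exp(\cdots)$.

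I expect the main difficulties to be essentially bookkeeping — tracking the many multiplicative constants, checking the $\gamma$ versus $6\varepsilon_m(S)$ edge case, and making sure the $m$-powers cancel cleanly — rather than conceptual. The one genuine idea is pairing the \emph{upper} Assouad entropy bound $\varepsilon_m(S)\le C_S m^{-1/\varrho}$ (which forces each cell's local covering number to decay like $m^{-1}$) against the $m^{1/(2p)}$ blow-up of Lemma \ref{lem:entropy_local_kernel}; the slightly delicate bit is that this cancellation only survives on the high-probability event $G$, so the low-probability event $G^c$ must be absorbed via the exponentially small probability of Corollary \ref{cor:prob_opt_covering}, at the cost of the stated extra power of $m$.
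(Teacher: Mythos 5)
Your proposal is correct and follows essentially the same route as the paper: split the expectation on the event $G$ (the paper's $Z$) of Corollary \ref{cor:prob_opt_covering}, on $G$ feed the per-cell covering bound $\cN(A_j\cap S,\gamma)\lesssim C_S^{\varrho+1}m^{-1}\gamma^{-\varrho}$ from Assumption \ref{ass:assouad_dim_ast} (via $A_j\cap S\subset B_{6\varepsilon_m(S)}(c_j)$) into Lemma \ref{lem:entropy_numbers_gauss_rkhs} and Lemma \ref{lem:entropy_local_kernel} so that the $m^{-1/(2p)}$ gain cancels the $m^{1/(2p)}$ loss, and on $G^c$ use the crude global bound times $\mu^n(G^c)$ together with the lower entropy bound $\varepsilon_m(S)\geq C_S^{-1}m^{-1/\varrho}$. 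The only cosmetic difference is that you handle the $\gamma$ versus $6\varepsilon_m(S)$ edge case explicitly where the paper absorbs it into $r=\max\{m^{-1/\varrho},6\varepsilon_m(S)\}$, and you are slightly more explicit about invoking Corollary \ref{cor:restricted_kernel}; both derivations in fact produce $C_S^{-\delta}$ in the exponent, consistent with the paper's proof (the $C_S^{-\varrho}$ in the stated corollary appears to be a typo).
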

\begin{proof}
Let $Z$ be the set of samples $D\in X^n$ such that $A_j\cap S\subset B_{6\varepsilon_m(S)}(c_j)$ and decompose the expectation into
\begin{align}
&\E_{D\sim\mu^n}e_i(\id:\LRKHS \rightarrow L_2(\D))\nonumber \\
=&\, \E_{D\sim\mu^n}\mathbf{1}_Z e_i(\id:\LRKHS \rightarrow L_2(\D))+\E_{D\sim\mu^n}\mathbf{1}_{Z^C} e_i(\id:\LRKHS \rightarrow L_2(\D)).\label{eqn:expectation_split}
\end{align}
To estimate the first summand in (\ref{eqn:expectation_split}) we will use Lemma \ref{lem:entropy_local_kernel} with
\begin{equation}
a_j=K^\frac{1}{2p}p^{-\frac{d+1}{2p}} \cN(S\cap A_j,\gamma)^\frac{1}{2p},
\end{equation}
and $q=2p$, cf.~Lemma \ref{lem:entropy_numbers_gauss_rkhs}. To this end, note that $S \cap A_j\subset S\cap B_r(c_j)$, where $r:=\max \{m^{-1/\varrho},6\varepsilon_m(S)\}$. By Assumption \ref{ass:assouad_dim_ast} we have
\begin{align*}
a_j&\leq K^\frac{1}{2p} p^{-\frac{d+1}{2p}} \cN(S\cap B_r(c_j),\gamma)^\frac{1}{2p}\leq K^\frac{1}{2p} p^{-\frac{d+1}{2p}} \left( C_S\left( \frac{\gamma}{r}\right)^{-\varrho}\right)^\frac{1}{2p}  \\
&\leq  \left(6^\varrho C_S^{\varrho+1} K\right)^\frac{1}{2p} p^{-\frac{d+1}{2p}}\gamma^{-\frac{\varrho}{2p}} m^{-\frac{1}{2p}}
\end{align*}
for $\gamma\leq m^{-1/\varrho}\leq r$. This implies
\begin{align*}
\E_{D\sim\mu^n}\mathbf{1}_Z e_i(\id:\LRKHS \rightarrow L_2(\D)) &\leq \E_{D\sim\mu^n}\mathbf{1}_Z e_i(\id:\LRKHS \rightarrow \ell_\infty(S)) \\
&\leq \E_{D\sim\mu^n} (3\log 4)^\frac{1}{2p} \left( \sum_{j=1}^m \left(\frac{a_j}{\sqrt{\lambda}} \right)^{2p}\right)^{\frac{1}{2p}} i^{-\frac{1}{2p}} \\
&\leq  (3\cdot 6^\varrho\log (4) KC_S^{\varrho+1})^\frac{1}{2p} p^{-\frac{d+1}{2p}} \lambda^{-\frac{1}{2}} \gamma^{-\frac{\varrho}{2p}} i^{-\frac{1}{2p}}
\end{align*}
for $m\in\N$ and $\gamma\leq m^{-1/\varrho}$. For the second summand in (\ref{eqn:expectation_split}) we use Lemma \ref{lem:entropy_local_kernel} with $a_j=K^\frac{1}{2p}p^{-\frac{d+1}{2p}}\cN(S,\gamma)^\frac{1}{2p}$ and $q=2p$ and Corollary \ref{cor:prob_opt_covering} to bound $\mu^n(Z^C)$. Note that for $r_0:=\max\{1,\diam\,S\}$ we have by Lemma \ref{lem:entropy_numbers_gauss_rkhs} and Assumption \ref{ass:assouad_dim_ast}
\begin{align*}
a_j\leq  K^\frac{1}{2p}p^{-\frac{d+1}{2p}}\gamma^{-\frac{\varrho}{2p}} r_0^\frac{\varrho}{2p}
\end{align*}
for $\gamma\leq 1\leq r_0$, which implies
\begin{align*}
&\E_{D\sim\mu^n}\mathbf{1}_{Z^C} e_i(\id:\LRKHS \rightarrow L_2(\D)) \\
\leq & \mu^n(Z^C) (3\log 4)^\frac{1}{2p}\left( m K p^{-d-1}\gamma^{-\varrho}r_0^\varrho \lambda^{-p}\right)^\frac{1}{2p} i^{-\frac{1}{2p}} \\
\leq & m\exp(-C_\mu^{-1} n\varepsilon_m(S)^\delta) \left(3\log (4) m K p^{-d-1}\gamma^{-\varrho}r_0^\varrho \lambda^{-p}  \right)^\frac{1}{2p} i^{-\frac{1}{2p}} \\
\leq & (3\log(4)r_0^\varrho K)^\frac{1}{2p} m^{1+\frac{1}{2p}}\exp(-C_\mu^{-1} C_S^{-\delta} nm^{-\delta/\varrho})  p^{-\frac{d+1}{2p}}\lambda^{-\frac{1}{2}} \gamma^{-\frac{\varrho}{2p}} i^{-\frac{1}{2p}}
\end{align*}
for $\gamma\leq 1$, where in the second inequality we used Corollary \ref{cor:prob_opt_covering} to bound $\mu^n(Z^C)$ and in the last inequality we used the lower bound on $\varepsilon_m(S)$ from Assumption \ref{ass:assouad_dim_ast}.
\end{proof}
Finally, before we can present our general oracle inequality, we need to introduce a last regularity assumption on the considered loss function. To this end, we say that a loss function $L:Y\times\R\rightarrow [0,\infty)$ is locally Lipschitz continuous if for every $a>0$ the functions $L(y,\cdot)|_{[-a,a]}, y\in Y,$ are uniformly Lipschitz continuous, that is
\begin{align*}
|L|_{a,1}:=\sup_{\substack{s,t\in [-a,a], s\neq t \\y\in Y} } \frac{|L(y,t)-L(y,s)|}{|t-s|}<\infty.
\end{align*}
\begin{thm}\label{thm:general_oracle_inequality}
Assume $L$ is a locally Lipschitz continuous loss that can be clipped at $M>0$ and that the supremum and variance bounds are satisfied for constants $B>0, \vartheta\in[0,1]$, and $V\geq B^{2-\vartheta}$. Furthermore, assume \ref{ass:assouad_dim_ast} and \ref{ass:marginal} are satisfied and fix an $f_0\in \LRKHS$ and a $B_0\geq B$ with $\norm{L\circ f_0}_\infty\leq B_0$. Then there exists a constant $K$ such that for all $n\in\N, \gamma_1=\ldots=\gamma_m=:\gamma\in(0,m^{-1/\varrho}),\lambda_1=\ldots=\lambda_m=:\lambda>0, p\in (0,1/2]$ and $\tau>0$ we have
\begin{align*}
&\Rk_{L,\P}(\fftsvm)-\Rk_{L,\P}^* \\
\leq &\,9(\norm{f_0}_{\LRKHS}^2+\Rk_{L,\P}(f_0)-\Rk_{L,\P}^*) \\
&+ KC_{\P,m}\left( \frac{p^{-d-1}\gamma^{-\varrho}}{\lambda^p n} \right)^\frac{1}{2-p-\vartheta+\vartheta p} +3\left( \frac{72V\tau}{n} \right)^\frac{1}{2-\vartheta}+\frac{15B_0 \tau}{n}
\end{align*}
with probability $\P^n$ not less than $1-3\e^{-\tau}$, where
\begin{align*}
C_{\P,m}=&\max\left\{B, \left( |L|_{M,1}^pV^\frac{1-p}{2}\right)^\frac{2}{2-p-\vartheta+\vartheta p}, |L|_{M,1}^pB^{1-p},1 \right\} \\
&\cdot \max\left\{ C_S^{\varrho+1}\left(1+m^{2p+1}\exp\left(-2C_\mu^{-1} C_S^{-\delta}pnm^{-\delta/\varrho} \right)\right),B^{2p}\right\}^\frac{1}{2-p-\vartheta+\vartheta p}.
\end{align*}
\end{thm}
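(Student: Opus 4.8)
The strategy is to reduce the statement to the generic oracle inequality for support vector machines, e.g.\ \cite[Theorem 7.23]{SteinwartChristmannSVMs}, applied to the RKHS $\LRKHS$ with regularization parameter equal to $1$. The point is that, since the $\lambda_j$'s are part of the norm of $\LRKHS$, the estimator (\ref{eqn:def_LSVM}) is precisely the SVM over the Hilbert space $(\LRKHS,\norm{\cdot}_{\LRKHS})$ for the loss $L$ with regularization parameter $1$, and $\fftsvm$ is its clipped version (for the partition produced by Algorithm \ref{alg:FFT}). Before invoking the cited result one checks its hypotheses: $L$ can be clipped at $M$, is locally Lipschitz continuous, and satisfies the supremum and variance bounds with the stated constants — all of which is assumed. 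It remains to observe that $\LRKHS$ is a separable RKHS with bounded measurable kernel: each $H_{\gamma_j}(A_j)$ is the image of the separable Gaussian RKHS under the metric surjection of Corollary \ref{cor:restricted_kernel}, hence separable, and $\LRKHS$ is their finite direct sum; the kernel $k$ is measurable by its explicit form and bounded since $k(x,x)=\lambda^{-1}$.

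Next I would feed in the entropy-number estimate. By Corollary \ref{cor:entroby_bound} there is a constant $K_0\ge 1$ (depending only on $d$) such that, setting
\begin{equation*}
a:=\max\Big\{B,\ (C_S^{\varrho+1}K_0)^{\frac{1}{2p}}\,p^{-\frac{d+1}{2p}}\,\lambda^{-\frac12}\,\gamma^{-\frac{\varrho}{2p}}\big(1+m^{1+\frac{1}{2p}}\exp(-C_\mu^{-1}C_S^{-\varrho}nm^{-\delta/\varrho})\big)\Big\},
\end{equation*}
we have $\E_{D\sim\mu^n} e_i(\id:\LRKHS\to L_2(\D))\le a\, i^{-1/(2p)}$ for all $i\in\N$; the outer maximum with $B$ only serves to meet the requirement $a\ge B$ of \cite[Theorem 7.23]{SteinwartChristmannSVMs} and does not affect the bound.

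Applying \cite[Theorem 7.23]{SteinwartChristmannSVMs} with $H=\LRKHS$, regularization parameter $1$, the fixed element $f_0$, the bound $B_0$ and the above $a$, and then dropping the nonnegative regularization term from the left-hand side, gives, with probability $\P^n$ not less than $1-3\e^{-\tau}$ (the case $1-3\e^{-\tau}\le 0$ being trivial),
\begin{align*}
\Rk_{L,\P}(\fftsvm)-\Rk_{L,\P}^*
\le&\ 9\big(\norm{f_0}_{\LRKHS}^2+\Rk_{L,\P}(f_0)-\Rk_{L,\P}^*\big) \\
&+\widetilde K\Big(\frac{a^{2p}}{n}\Big)^{\frac{1}{2-p-\vartheta+\vartheta p}}+3\Big(\frac{72V\tau}{n}\Big)^{\frac{1}{2-\vartheta}}+\frac{15B_0\tau}{n},
\end{align*}
where $\widetilde K$ depends only on $p,M,B,V,\vartheta$ and $|L|_{M,1}$. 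It remains to bring the second summand into the claimed shape. Using $2p\le 1$ and the subadditivity of $t\mapsto t^{2p}$ one estimates $\big(1+m^{1+1/(2p)}\exp(-C_\mu^{-1}C_S^{-\varrho}nm^{-\delta/\varrho})\big)^{2p}\le 1+m^{2p+1}\exp(-2pC_\mu^{-1}C_S^{-\varrho}nm^{-\delta/\varrho})$, so that
\begin{equation*}
a^{2p}\le p^{-(d+1)}\gamma^{-\varrho}\lambda^{-p}\,K_0\,\max\big\{C_S^{\varrho+1}\big(1+m^{2p+1}\exp(-2pC_\mu^{-1}C_S^{-\varrho}nm^{-\delta/\varrho})\big),\,B^{2p}\big\};
\end{equation*}
inserting this, using $2-p-\vartheta+\vartheta p=1+(1-p)(1-\vartheta)\ge 1$ to split the remaining maximum under the power $1/(2-p-\vartheta+\vartheta p)$, and absorbing $K_0$ together with the $p,M,B,V,\vartheta,|L|_{M,1}$-dependence of $\widetilde K$ into a universal constant $K$ and the quantity $C_{\P,m}$ as in the statement, finally produces the asserted bound.

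The conceptual content is already contained in Corollary \ref{cor:entroby_bound} and in the cited generic oracle inequality, so the bulk of the work here is bookkeeping: the (routine) separability and measurability check for $\LRKHS$, and — above all — the careful repackaging of constants in the last step. Matching the composite constant of \cite[Theorem 7.23]{SteinwartChristmannSVMs}, which mixes $B$, $V$, $\vartheta$ and $|L|_{M,1}$, against the split in the statement into a single universal $K$ and the fully tracked $C_{\P,m}$, and correctly propagating the exponential tail term through the outer power $2p$, is the only genuinely delicate point; in particular the factoring of the nested maxima so that the $p^{-d-1}\gamma^{-\varrho}\lambda^{-p}$-dependence is pulled cleanly out (here $K_0\ge 1$ and $\gamma< m^{-1/\varrho}\le 1$ are used) must be done with some care.
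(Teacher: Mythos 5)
Your proposal follows essentially the same route as the paper's proof: both reduce to \cite[Theorem 7.23]{SteinwartChristmannSVMs} over $\LRKHS$ with regularization parameter $1$, both feed in the expected entropy-number bound from Corollary \ref{cor:entroby_bound} to supply the quantity $a$ there, and both use the subadditivity $(x+y)^{2p}\leq x^{2p}+y^{2p}$ for $p\leq 1/2$ plus the $K(p)$-bound from \cite[Proof of Theorem A.10]{HammSteinwart_IntrinsicDimension} to repackage into $K$ and $C_{\P,m}$. You spell out a few checks (separability and measurability of $\LRKHS$, the fact that $\max\{\cdot,B\}$ only serves to meet the requirement $a\geq B$, the absorption of $K_0^{1/(2-p-\vartheta+\vartheta p)}$ into $K$) that the paper handles implicitly or by citation; the only discrepancy is that you carry $C_S^{-\varrho}$ in the exponential tail (following the statement of Corollary \ref{cor:entroby_bound}) where the theorem's $C_{\P,m}$ and the paper's own proof use $C_S^{-\delta}$ — this is a typo in the paper's Corollary statement, not a flaw in your reasoning.
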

\begin{proof}
By \cite[Theorem 7.23]{SteinwartChristmannSVMs} together with the entropy estimate from Corollary \ref{cor:entroby_bound} we have
\begin{align*}
\Rk_{L,\P}(\fftsvm)-\Rk_{L,\P}^*\leq &\,9(\norm{f_0}_{\LRKHS}^2+\Rk_{L,\P}(f_0)-\Rk_{L,\P}^*) \\
&+ K(p) \left( \dfrac{a^{2p}}{ n} \right)^\frac{1}{2-p-\vartheta+\vartheta p}+3\left( \frac{72V\tau}{n} \right)^\frac{1}{2-\vartheta}+\frac{15B_0 \tau}{n}
\end{align*}
with probability $\P^n$ not less than $1-3\e^{-\tau}$, where
\begin{align*}
a :=\max\left\{ (C_S^{\varrho+1}K)^{\frac{1}{2p}}p^{-\frac{d+1}{2p}}\lambda^{-\frac{1}{2}}\gamma^{-\frac{\varrho}{2p}}\left(1+m^{1+\frac{1}{2p}}\exp(-C_\mu^{-1} C_S^{-\delta}nm^{-\delta/\varrho})\right),B \right\}
\end{align*}
and $K(p)$ satisfies
\begin{equation*}
K(p)\leq \tilde{K}\max\left\{ B, \left(|L|_{M,1}^p V^\frac{1-p}{2} \right)^\frac{2}{2-p-\vartheta+\vartheta p},|L|_{M,1}^pB^{1-p},1  \right\}
\end{equation*}
for a universal constant $\tilde{K}$, see \cite[Proof of Theorem A.10]{HammSteinwart_IntrinsicDimension}, for all $p\in(0,1/2]$. Simplifying the term $a^{2p}$ using that $(x+y)^{2p}\leq x^{2p}+y^{2p}$ for $x,y\geq 0$ and $p\in(0,1/2]$ then yields the result.
\end{proof}
\section{Proofs Related to Section \ref{sec:regression}}
\begin{proof}[Proof of Theorem \ref{thm:ls_oracle_inequality}]
The least-squares loss satisfies a supremum/variance bound for $B=4M^2, V=16M^2$ and $\vartheta=1$ as well as $|L|_{M,1}=4M$. Theorem \ref{thm:ls_oracle_inequality} gives us
\begin{align*}
\Rk_{L,\P}(\fftsvm)-\Rk_{L,\P}^*\leq &\,9(\norm{f_0}_{\LRKHS}^2+\Rk_{L,\P}(f_0)-\Rk_{L,\P}^*) \\
&+ KC_{\P,m}\frac{p^{-d-1}\gamma^{-\varrho}}{\lambda^p n} +\frac{(3456M^2+15B_0)\tau}{n}
\end{align*}
with probability not less than $1-3\e^{-\tau}$, where
\begin{equation*}
C_{\P,m}\leq \max\left\{ 16M^2,1\right\}\max\left\{ C_S^{\varrho+1}\left(1+m^{2p+1}\exp\left(-2C_\mu^{-1} C_S^{-\delta}pnm^{-\delta/\varrho}\right)\right),(4M^2)^{2p}\right\}.
\end{equation*}
Since $n\geq 2$ we have $p=\log 2/(2\log n)\leq 1/2$ and the second factor above can be bounded by
\begin{equation*}
\max\left\{ C_S^{\varrho+1}\left(1+m^2\exp\left(-\log 2C_\mu^{-1} C_S^{-\delta}nm^{-\delta/\varrho}/\log n\right)\right),4M^2\right\}
\end{equation*}
and we have $p^{-d-1}\lambda^{-p}\leq (2/\log 2)^{d+1}\log ^{d+1}n \lambda^{-1/\log n}$ for $\lambda\leq 1$. To complete the proof we need to pick a suitable function $f_0\in\LRKHS$. To this end, note that for $\gamma=\gamma_j,\lambda=\lambda_j, j=1,\ldots,m$ we have $H_\gamma(X)\subset \LRKHS$ and $\norm{f}_{\LRKHS}^2\leq m\lambda\norm{f}_{H_\gamma(X)}^2$ for all $f\in H_\gamma(X)$. By \cite[Lemma A.12, Proof of Proposition 3.2]{HammSteinwart_IntrinsicDimension} there exists an $f_0\in H_\gamma(X)$ with
\begin{align*}
\Rk_{L,\P}(f_0)-\Rk_{L,\P}^*\leq  \left( \frac{\Gamma\left(\frac{\alpha+d}{2} \right)}{\Gamma\left( \frac{d}{2}\right)} \right)^2 2^{-\alpha}d^{k}|f_0|_{C^{k,\beta}(\R^d)}^2\gamma^{2\alpha}
\end{align*}
and $\norm{f_0}_{H_\gamma(X)}^2\leq \pi^{-d/2}4^{k+1}\norm{f_0}_{L_2(\R^d)} \gamma^{-d}$ \cite[Lemma A.13]{HammSteinwart_IntrinsicDimension} which completes the proof.
\end{proof}
\begin{proof}[Proof of Corollary \ref{cor:ls_learning_rates}]
We apply Theorem \ref{thm:ls_oracle_inequality} with the specified values for $\lambda$ and $\gamma$. Examining the summands in the bound given in Theorem \ref{thm:ls_oracle_inequality}, ignoring constants for the moment, we see that for $m=\lceil n^\sigma\rceil$ and $\gamma=n^{-a}, \lambda=n^{-b}$ with $a=1/(2\alpha+\varrho), b\geq \sigma+(2\alpha+d)/(2\alpha+\varrho)$ we have
\begin{equation*}
m\lambda\gamma^{-d}\leq 2n^\sigma n^{-b}n^{ad}\leq 2n^{-\frac{2\alpha}{2\alpha+d\varrho}},
\end{equation*}
\begin{equation*}
\gamma^{2\alpha}=n^{-2\alpha a}=n^{-\frac{2\alpha}{2\alpha+d\varrho}}, 
\end{equation*}
\begin{equation*}
\lambda^{-1/\log n}\gamma^{-\varrho}n^{-1}\log^{d+1}n=n^{b/\log n}n^{a\varrho}n^{-1} \log^{d+1}n=\e^bn^{-\frac{2\alpha}{2\alpha+\varrho}}\log^{d+1}n.
\end{equation*}
That is, every summand is of the order of $n^{-2\alpha/(2\alpha+\varrho)}\log^{d+1}n$. To complete the proof we only need to check that the constant 
\begin{equation*}
c_{m,n}=\max\left\{ C_S^{\varrho+1}\left(1+m^{2}\exp\left( -\log 2C_\mu^{-1} C_S^{-\delta}nm^{-\delta/\varrho}/\log n\right)\right),4M^2\right\}\max\left\{16M^2,1 \right\}
\end{equation*}
in Theorem \ref{thm:ls_oracle_inequality} is uniformly bounded in $n$ for $m=\lceil n^\sigma\rceil$. To this end, note that
\begin{align*}
& m^2\exp\left(-\log 2C_\mu^{-1} C_S^{-\delta}nm^{-\delta/\varrho}/\log n\right) \\
\leq & 4n^{2\sigma}\exp\left(-2^{-\delta/\varrho}\log 2C_\mu^{-1} C_S^{-\delta}n^{1-\sigma\delta/\varrho}/\log n\right)
\end{align*}
uniformly bounded in $n>1$ since $1-\sigma\delta/\varrho>0$.
\end{proof}
\begin{proof}[Proof of Theorem \ref{thm:ls_adaptive_rates}]
By \cite[Theorem 7.2]{SteinwartChristmannSVMs}, an oracle inequality for empirical risk minimization, we have
\begin{align}
\begin{split}\label{eqn:ls_validation_erm}
\Rk_{L,\P}(\fftsvmcv)-\Rk_{L,\P}^*\leq&\, 6 \min_{(\blambda,\bgamma)\in \Lambda_n^m\times \Gamma_n^m} \left( \Rk_{L,\P}(\wideparen{f}_{D_1,\blambda,\bgamma,\mathrm{FFT}(m)})-\Rk_{L,\P}^* \right) \\
&+\frac{512M^2(\tau+\log(1+|\Lambda_n^m\times\Gamma_n^m|))}{n-l} \\
\leq &\,6\left( \Rk_{L,\P}(\wideparen{f}_{D_1,\blambda^*,\bgamma^*,\mathrm{FFT}(m)})-\Rk_{L,\P}^* \right)\\
&+\frac{2048M^2(\tau+\log(1+|\Lambda_n^m\times\Gamma_n^m|))}{n}
\end{split}
\end{align}
with probability $\P^{n-l}$ not less than $1-\e^{-\tau}$, where in the last step we picked values $\bgamma^*\in\Gamma_n^m$ and $\blambda^*\in\Lambda_n^m$ which we will specify in a moment. We again only consider $\lambda_1=\ldots=\lambda_m=:\lambda$ and $\gamma_1=\ldots=\gamma_m=:\gamma$. Since $\sigma<\varrho/\delta$, by Theorem \ref{thm:ls_oracle_inequality} there exists a constant $C>0$ independent of $\lambda,\gamma$ and $n$ (see also proof of Corollary \ref{cor:ls_learning_rates}) such that
\begin{align*}
\Rk_{L,\P}(\wideparen{f}_{D_1,\blambda,\bgamma,\mathrm{FFT}(m)})-\Rk_{L,\P}^* &\leq C\left( m\lambda\gamma^{-d}+\gamma^{2\alpha} +\lambda^{-1/\log n}\gamma^{-\varrho}l^{-1}\log^{d+1}n+\frac{\tau}{l}\right) \\
&\leq C\left( m\lambda\gamma^{-d}+\gamma^{2\alpha} +2\e^{\sigma+d}\gamma^{-\varrho}n^{-1}\log^{d+1}n+\frac{2\tau}{n}\right)
\end{align*}
with probability $\P^l$ not less than $1-3\e^{-\tau}$ for $\lambda\in\Lambda_n$ and $\gamma\in\Gamma_n\cap(0,m^{-1/\varrho})$. Since $A_n$ is an $1/\log n$-net of $(0,1]$ we have   $\Gamma_n\cap(0,m^{-1/\varrho})\neq\emptyset$ for $1-\sigma/\varrho>2/\log n$ and since $\sigma<\varrho/(2\alpha+\varrho)$ we can choose $a_*\in A_n$ such that $\gamma=n^{-a_*}\in (0,m^{-1/\varrho})$ and $1/(2\alpha+\varrho)\leq a_*\leq 1/(2\alpha+\varrho)+2/\log n$. That is, by choosing $\gamma=n^{-a_*}$ and $\lambda=n^{-\sigma-d}$ as $\bgamma^*,\blambda^*$ we have
\begin{align*}
\Rk_{L,\P}(\wideparen{f}_{D_1,\blambda^*,\bgamma^*,\mathrm{FFT}(m)})\leq C\left( n^{-\frac{2\alpha}{2\alpha+\varrho}}+\e^{c+d+4\alpha}n^{-\frac{2\alpha}{2\alpha+\varrho}}\log^{d+1} n+\frac{2\tau}{n}\right)
\end{align*}
with probability not less than $1-3\e^{-\tau}$. Combining this with (\ref{eqn:ls_validation_erm}) we get using $|\Lambda_n^m\times\Gamma_n^m|\lesssim\log^{2m}n$ that
\begin{align*}
\Rk_{L,\P}(\fftsvmcv)-\Rk_{L,\P}^* &\leq c_1\left( n^{-\frac{2\alpha}{2\alpha+\varrho}}\log^{d+1}n+\frac{\tau}{n}\right)+c_2\left( \frac{\tau}{n}+\frac{m\log \log n}{n}\right)
\end{align*}
with probability $\P^n$ not less than $(1-\e^{-\tau})(1-3\e^{-\tau})\geq(1-4\e^{-\tau})$. Noting that $m/n=2n^{\sigma-1}\leq 2n^{-2\alpha/(2\alpha+\varrho)}$ and some elementary transformations yield the assertion.
\end{proof}
\section{Proofs Related to Section \ref{sec:classification}}
\begin{proof}[Proof of Theorem \ref{thm:class_oracle_inequality}]
The supremum bound is obviously satisfied for $B=2$ and by \cite[Theorem 8.24]{SteinwartChristmannSVMs} the variance bound is satisfied for $V=6C_*^{q/(q+1)}$ and $\vartheta=q/(q+1)$. Furthermore, it is not hard to see that $|\Lhinge|_{1,1}=1$. Given this value for $\vartheta$, the exponent in Theorem \ref{thm:general_oracle_inequality} then reads
\begin{align*}
\frac{1}{2-p-\vartheta+\vartheta p}&=\frac{1}{2-p-\frac{q}{q+1}(1-p)}=\frac{q+1}{(2-p)(q+1)-q(1-p)} \\
&=\frac{q+1}{2q+2-pq-p-q+pq} = \frac{q+1}{q+2-p}.
\end{align*}
An application of Theorem \ref{thm:general_oracle_inequality} then gives us for $\lambda>0$, $\gamma\in(0,m^{-1/\varrho})$, and $p\in(0,1/2]$
\begin{align*}
\Rk_{L,\P}(\fftsvm)-\Rk_{L,\P}^*\leq&\, 9\left(  \norm{f_0}_{\LRKHS}^2+\Rk_{L,\P}(f_0)-\Rk_{L,\P}^*\right)\\
&+C_{\P,m} K\left( \frac{p^{-d-1}\lambda^{-p}\gamma^{-\varrho}}{n} \right)^\frac{q+1}{q+2-p}+3\left(\frac{432C_*^{\frac{q}{q+1}}\tau}{n}\right)^\frac{q+1}{q+2} \\
&+\frac{15B_0\tau}{n}
\end{align*}
with probability not less than $1-3\e^{-\tau}$. With the specified values for $B,V,\vartheta$ and $|\Lhinge|_{1,1}$ we see that the first factor of the constant $C_{\P,m}$ can be bounded by
\begin{align*}
&\max\left\{B, \left( |L|_{M,1}^pV^\frac{1-p}{2}\right)^\frac{2}{2-p-\vartheta+\vartheta p}, |L|_{M,1}^pB^{1-p},1 \right\} \\
=& \max\left\{ 2, \left( 6C_*^{\frac{q}{q+1}} \right)^\frac{(1-p)(q+1)}{q+2-p},2^{1-p} \right\} \leq \max\left\{ 2,6C_*^\frac{q}{q+1} \right\}.
\end{align*}
Noting that $(q+1)/(q+2-p)\leq 1$ we see that the second factor of $C_{\P,m}$ in Theorem \ref{thm:general_oracle_inequality} is bounded by
\begin{equation*}
\max\left\{C_S^{\varrho+1}\left(1+m^{2p+1}\exp\left( -2C_\mu^{-1} C_S^{-\varrho}pn/m\right) \right),2\right\}.
\end{equation*}
Choosing $p=\log 2/(2\log n)\leq 1/2$ gives us
\begin{equation*}
C_{\P,m}K\left( \frac{p^{-d-1}\lambda^{-p}\gamma^{-\varrho}}{n} \right)^{\frac{q+1}{q+2-p}}\leq c_{m,n} K\lambda^{-1/\log n}\left( \frac{\gamma^{-\varrho}}{n}\right)^\frac{q+1}{q+2}
\end{equation*}
for $\gamma^{-\varrho}/n\leq 1$ with $c_{m,n}$ defined as in the theorem. Finally, to bound the approximation error we need to pick a suitable function $f_0\in\LRKHS$. To this end, note that for $\gamma=\gamma_j,\lambda=\lambda_j, j=1,\ldots,m$ we have $H_\gamma(X)\subset \LRKHS$ with $\norm{f}_{\LRKHS}^2\leq m\lambda\norm{f}_{H_\gamma(X)}^2$ for all $f\in H_\gamma(X)$. By Equation (8.15) in \cite[Proof of Theorem 8.18]{SteinwartChristmannSVMs} there exists a function $f_0\in H_\gamma(X)$ with $\norm{f_0}_\infty\leq 1$ and
\begin{align*}
\lambda\norm{f_0}_{H_\gamma(X)}^2+\Rk_{L,\P}(f_0)-\Rk_{L,\P}^*\leq \frac{3^d}{\Gamma\left(\frac{d}{2}+1\right)} \lambda\gamma^{-d}+\frac{2^{1-\beta/2}\Gamma\left(\frac{\beta+d}{2}\right)}{\Gamma\left(\frac{d}{2}\right)}C_{**}\gamma^\beta.
\end{align*}
since $\norm{f_0}_\infty\leq 1$ we have $B_0=2$ which completes the proof.
\end{proof}
\begin{proof}[Proof of Corollary \ref{cor:class_learning_rates}]
This follows from Theorem \ref{thm:class_oracle_inequality} by plugging in the values for $\lambda$ and $\gamma$ as specified, where we only need to check that the specified $\gamma$ is in the admissible range required by Theorem \ref{thm:class_oracle_inequality} and that the constant $c_{m,n}$ is uniformly bounded, similar to the proof of Corollary \ref{cor:ls_learning_rates}. For the former, recall that $\gamma=n^{-a}$ with
\begin{equation*}
a=\frac{q+1}{\beta(q+2)+\varrho(q+1)}.
\end{equation*}
By assumption we have $\sigma/\varrho \leq a$ and obviously also $a\leq 1/\varrho$. This implies $\gamma\in[n^{-1/\varrho},m^{-1/\varrho}]$, as required by Theorem \ref{thm:class_oracle_inequality}. Concerning the constant $c_{m,n}$ note that
\begin{align*}
m^2\exp\left( -\log (2) C_\mu C_S^{-\varrho}nm^{-\delta/\varrho}/\log n\right)\leq 4n^{2\sigma}\exp\left( -2^{-\delta/\varrho}\log (2) C_\mu C_S^{-\delta}n^{1-\sigma\delta/\varrho}/\log n\right)
\end{align*}
which is uniformly bounded in $n\in\N$ since $1-\sigma\delta/\varrho>0$.
\end{proof}
\begin{proof}[Proof of Theorem \ref{thm:class_adaptive_rates}]
By \cite[Theorem 7.2]{SteinwartChristmannSVMs}, an oracle inequality for empirical risk minimization, we have
\begin{align}
\begin{split}\label{eqn:class_validation_erm}
&\Rk_{L,\P}(\fftsvmcv)-\Rk_{L,\P}^* \\
\leq&\, 6 \min_{(\blambda,\bgamma)\in \Lambda_n\times \Gamma_n} \left( \Rk_{L,\P}(\wideparen{f}_{D_1,\blambda,\bgamma,\mathrm{FFT}(m)})-\Rk_{L,\P}^* \right) \\
&+4\left(\frac{48C_*^\frac{q}{q+1}(\tau+\log(1+|\Lambda_n^m\times\Gamma_n^m|))}{n-l} \right)^\frac{q+1}{q+2} \\
\leq &\,6\left( \Rk_{L,\P}(\wideparen{f}_{D_1,\blambda^*,\bgamma^*,\mathrm{FFT}(m)})-\Rk_{L,\P}^* \right)\\
&+4\left( \frac{192C_*^\frac{q}{q+1}(\tau+\log(1+|\Lambda_n^m\times\Gamma_n^m|))}{n}\right)^\frac{q+1}{q+2}
\end{split}
\end{align}
with probability $\P^{n-l}$ not less than $1-\e^{-\tau}$, where in the last step we picked values $\bgamma^*\in\Gamma_n^m$ and $\blambda^*\in\Lambda_n^m$ which we will specify in a moment. We again set $\lambda_1=\ldots=\lambda_m=:\lambda$ and $\gamma_1=\ldots=\gamma_m=:\gamma$. By Theorem \ref{thm:class_oracle_inequality} there exists a constant $C>0$ independent of $\lambda,\gamma$ and $n$ (see also proof of Corollary \ref{cor:class_learning_rates}) such that
\begin{align*}
&\Rk_{L,\P}(\wideparen{f}_{D_1,\blambda,\bgamma,\mathrm{FFT}(m)})-\Rk_{L,\P}^* \\
\leq& C\left( m\lambda\gamma^{-d}+\gamma^\beta+\lambda^{-1/\log n} \left( \frac{\gamma^{-\varrho}}{l}\right)^\frac{q+1}{q+2}\log^{d+1}n +\left(\frac{\tau}{l} \right)^\frac{q+1}{q+2}+\frac{\tau}{l}\right)
\end{align*}
with probability not less than $1-3\e^{-\tau}$ for all $\lambda\in\lambda_n$ and $\gamma\in\Gamma_n\cap [l^{-1/\varrho},m^{-1/\varrho}]$. Note that
\begin{equation*}
\left[l^{-1/\varrho},m^{-1/\varrho}\right]\supset \left[\left( \frac{2}{n}\right)^{1/\varrho},n^{-\sigma/\varrho} \right]=\left[n^{-(1-\log 2/\log n)/\varrho},n^{-\sigma/\varrho} \right].
\end{equation*}
Since $A_n$ is an $1/\log n$-net of $(0,1]$ we have for
\begin{equation*}
\left( 1-\frac{\log 2}{\log n}\right)\frac{1}{\varrho}-\frac{\sigma}{\varrho}>\frac{2}{\log n},
\end{equation*}
which is equivalent to $n>\exp ((2\varrho+\log 2)/(1-\sigma))$, that $\Gamma_n\cap [l^{-1/\varrho},m^{-1/\varrho}]\neq\emptyset$. That is, we can choose $a_*\in A_n$ such that $\gamma=n^{-a_*}$ is in the admissible range and satisfies
\begin{equation*}
\frac{q+1}{\beta(q+2)+\varrho(q+1)}-\frac{2}{\log n}\leq a_*\leq \frac{q+1}{\beta(q+2)+\varrho(q+1)}+\frac{2}{\log n}.
\end{equation*}
Choosing $\gamma=n^{-a_*}$ and $\lambda=n^{-\sigma-d}$, we can finish the proof exactly as the proof of Theorem \ref{thm:ls_adaptive_rates} by combining the inequalities above.
\end{proof}
\section{Dataset Summaries}\label{sec:dataset_summaries}
Below, summaries of the datasets we used for our experiments are given. For each dataset, the number of samples, the dimension of the input space, the naive error, and the base error is given. The naive error is the best error one can achieve using a constant decision function. In regression, this corresponds to the standard deviation of the labels $y_1,\ldots,y_n$ and in classification this corresponds to the fraction of labels from the smaller class. The base error is the test error on the (unmodified) dataset averaged over 10 repetitions.
\begin{table}[h]
\small\centering
\begin{tabular}{c c c c c}
Name& Samples& Dimension& Naive Error&Base error\\ \hline
air\_quality\_bc&8991&10&0.2343&0.0212\\
air\_quality\_co2&7674&10&0.2463&0.0751\\
air\_quality\_no2&7715&10&0.2862&0.0976\\
air\_quality\_nox&7718&10&0.2884&0.0806\\
bike\_sharing\_casual&17379&12&0.2687&0.0801\\
bike\_sharing\_total&17379&12&0.3717&0.1707\\
carbon\_nanotubes\_u&10721&5&0.6304&0.0268\\
carbon\_nanotubes\_v&10721&5&0.6311&0.0270\\
carbon\_nanotubes\_w&10721&5&0.5782&0.0382\\
cycle\_power\_plant&9568&4&0.4521&0.0992\\
electrical\_grid\_stability\_simulated&10000&12&0.3883&0.0871\\
facebook\_live\_sellers\_thailand\_shares&7050&9&0.0769&0.0504\\
five\_cities\_shenyang\_pm25&19038&14&0.1306&0.0653\\
gas\_sensor\_drift\_class&13910&128&1.7285&0.1702\\
gas\_sensor\_drift\_conc&13910&128&0.3432&0.0542\\
naval\_propulsion\_comp&11934&14&0.5888&0.0299\\
naval\_propulsion\_turb&11934&14&0.6000&0.0302\\
nursery&12960&8&1.2356&0.1923\\
parkinson\_motor&5875&19&0.4716&0.2316\\
parkinson\_total&5875&19&0.4459&0.2246\\
radius\_query&10000&3&0.3755&0.0270\\
real\_estate\_value&414&6&0.2473&0.1521\\
seoul\_bike\_data&8760&14&0.3627&0.1414\\
skill\_craft&3338&18&1.4480&0.9727\\
sml2010\_dining&4137&17&0.3769&0.0386\\
sml2010\_room&4137&17&0.3790&0.0393\\
travel\_review\_ratings&5456&23&0.6278&0.3442\\
wall\_follow\_robot\_2&5456&2&1.0047&0.1882\\
wall\_follow\_robot\_24&5456&24&1.0047&0.4310\\
wall\_follow\_robot\_4&5456&4&1.0047&0.2747\\
wine\_quality\_all&6497&12&0.8732&0.6739\\
wine\_quality\_white&4898&11&0.8855&0.6802\\ \hline
\end{tabular}
\caption{\label{tbl:reg_datasets}Regression datasets.}
\end{table}

\begin{table}
\small\centering
\begin{tabular}{c c c c c}
Name& Samples& Dimension& Naive Error&Base error\\ \hline
abalone&2870&8&0.4676&0.1868\\
anuran\_calls\_families&6585&22&0.3288&0.0075\\
anuran\_calls\_genus&5743&22&0.2774&0.0028\\
anuran\_calls\_species&4599&22&0.2437&0.0013\\
chess&3196&36&0.4778&0.0050\\
chess\_krvk&8747&22&0.4795&0.1782\\
crowd\_sourced\_mapping&9003&28&0.1659&0.0195\\
facebook\_live\_sellers\_thailand\_status&6622&9&0.3525&0.1574\\
firm\_teacher\_clave&8606&16&0.4997&0.0215\\
first\_order\_theorem\_proving&6118&51&0.4175&0.1904\\
gas\_sensor\_drift\_class&5935&128&0.4930&0.0009\\
gesture\_phase\_segmentation\_raw&5719&19&0.4842&0.0040\\
gesture\_phase\_segmentation\_va3&5691&32&0.4816&0.1560\\
landsat\_satimage&3041&36&0.4959&0.0010\\
mushroom&8124&111&0.4820&0.0000\\
nursery&8588&8&0.4967&0.0003\\
page\_blocks&5242&10&0.0628&0.0155\\
shill\_bidding&6321&9&0.1068&0.0069\\
spambase&4601&57&0.3940&0.0629\\
thyroid\_all\_bp&3621&31&0.0434&0.0352\\
thyroid\_ann&7034&21&0.0523&0.0271\\
thyroid\_hypo&2700&25&0.0504&0.0220\\
thyroid\_sick&3621&31&0.0621&0.0314\\
wall\_follow\_robot\_2&4302&2&0.4874&0.0010\\
wall\_follow\_robot\_24&4302&24&0.4874&0.0443\\
wall\_follow\_robot\_4&4302&4&0.4874&0.0043\\
waveform&3353&21&0.4942&0.0739\\
waveform\_noise&3347&40&0.4945&0.0754\\
wilt&4839&5&0.0539&0.0150\\
wine\_quality\_all&4974&12&0.4298&0.2481\\
wine\_quality\_type&6497&11&0.2461&0.0048\\
wine\_quality\_white&3655&11&0.3986&0.2311\\ \hline
\end{tabular}
\caption{\label{tbl:class_datasets}Classification datasets.}
\end{table}
\end{appendices}
\end{document}